\newtheorem{theorem}{Theorem}[section]
\newtheorem{lemma}{Lemma}[section]
\newtheorem{corollary}{Corollary}[section]
\newtheorem{conjecture}{Conjecture}[section]
\newcommand{\cals}[0]{{\mathcal S}}
\newcommand{\calr}[0]{{\mathcal R}}
\newcommand{\call}[0]{{\mathcal L}}
\newcommand{\cale}[0]{{\mathcal E}}
\newcommand{\calf}[0]{{\mathcal F}}
\newcommand{\calk}[0]{{\mathcal K}}
\newcommand{\calg}[0]{{\mathcal G}}
\newcommand{\calz}[0]{{\mathcal Z}}
\newcommand{\N}[0]{{\mathbb N}}
\newcommand{\Z}[0]{{\mathbb Z}}
\newcommand{\C}[0]{{\mathbb C}}
\newcommand{\ad}[0]{{\mbox{ \rm Ad}}}
\newcommand{\ol}[0]{\overline}
\newcommand{\na}[0]{\nabla}
\newcommand{\re}[1]{{(\ref{#1})}}
\newcommand{\be}[1]{\begin{equation} \label{#1}}
\newcommand{\en}[0]{\end{equation}}
\newcommand{\id}[0]{{\rm id}}			
\newcommand{\aut}[0]{{\mbox{ \rm Aut}}}
\newcommand{\triv}[0]{{\mbox{triv}}}
\newcommand{\bfc}[0]{{\bf C}}
\newcommand{\bfd}[0]{{\bf D}} 
\newcommand{\bfj}[0]{{\bf J}}
\newcommand{\bfk}[0]{{\bf K}}
\newcommand{\bfx}[0]{{\bf X}}
\newcommand{\bfv}[0]{{\bf v}}
\newcommand{\bfz}[0]{{\bf z}}
\newcommand{\bfa}[0]{{\bf A}}
\newcommand{\bfy}[0]{{\bf Y}}
\author[B. Burgstaller]{Bernhard Burgstaller}
\title[Computing 
the $K$-homology $K$-theory product in $KK$-theory]{Computing the $K$-homology $K$-theory  product
in splitexact  algebraic $KK$-theory}
\subjclass{19K35, 46L80, 16E20, 20M18}
\keywords{$KK$-theory, $K$-homology, $K$-theory, intersection product, 
$C^*$-algebra, algebra, ring}
\date{\today}
\urladdr{http://mathematik.work/bernhardburgstaller/index.html}
\email{bernhardburgstaller@yahoo.de}
\begin{document}

\begin{abstract} 

Explicit formulas are indicated that 
compute the product $z \cdot w$ of a level-one element $z \in KK^G(A,\bfc)$ 
and any element $w \in KK^G(\bfc,B)$ in splitexact algebraic $KK^G$-theory,  
or $KK^G$-theory for $C^*$-algebras,  
with very special $G$-actions. 
We also make 
such 
products 
accessible to  linear-split half-exact $kk$-theory 
by verifying the existence of a functor 
from algebraic splitexact $KK$-theory to $kk$-theory.
 

\if 0
We afterwards review 
which products  
may be computed in these theories
by explicit simple formulas so far. 
\fi
\if 0
and  make 
them accessible to  linear-split half-exact $kk$-theory. 
by verifying the exitence of a functor $GK \rightarrow kk$. 
\fi
\if 0
We afterwards review 
which products  
may be computed in these theories
by explicit simple formulas so far. 
We also make 
products in $GK^G$-theory accessible to  linear-split half-exact $kk$-theory. 
by verifying the existence of a functor $GK \rightarrow kk$. 
\fi
\end{abstract}

\maketitle 

\section{Introduction}

Using the well-known universal property  of the 
$G$-equivariant $KK^G$-theory for $C^*$-algebras introduced by Kasparov \cite{kasparov1981,kasparov1988}, 
namely stability, homotopy invariance and splitexactness \cite{higson}, 
one can immediately define and transfer $KK^G$-theory to
other 
categories of algebras than $C^*$-algebras 
by means of generators and relations implementing 
these axioms 
and calling this completely analogous $KK^G$-theory 
$GK^G$-theory  
for better clarity from now on \cite{gk}. 

In $KK^G$-theory for $C^*$-algebras, every element 
is a so-called level-one element, meaning it has a presentation 
in $GK^G$-theory containing at most one synthetic 
generator split $\Delta_s$ associated to a short splitexact sequence, 
see    
\cite[Theorem 14.3]{aspects}. 
Labeling the collection of level-one elements in $GK^G$-theory 
by $L_1 GK^G$, in this note it is proven 
in Corollary \ref{cor31} and Lemma \ref{cor32} that
the product map in $GK^G$-theory restricts to a map
\be{eq195}
L_1 GK^G (A,\bfc)  \otimes_\Z  GK^G (\bfc, B) 
 \rightarrow L_1 GK^G (A,B) :  z \otimes w \mapsto z \cdot w
 \en
 (the point is we land in $L_1 GK^G$), 
 provided with explicit simple formulas for 
 this product, 
 which are stated in Theorem \ref{thm1},
 if $\bfc= \C$ 
 and $G$ is a discrete group, or more generally if $\bfc = 
 C(X)$ ($X$ compact) 
 and $G$ is a unital discrete inverse semigroup 
 in  
 $GK^G$-theory for algebras, 
 and 
 this 
 method works also 
 in  $KK^G$-theory for $C^*$-algebras.
 \if 0
 ,  or in $KK^G$-theory 
 in case $G$ is a second-countable locally compact Hausdorff
 groupoid with compact base space $G^{(0)}$ 
 \cite{legall} (further explanations in \cite{aspects}),  
 see Corollary \ref{cor31} and Lemma \ref{cor32}.  
 \fi
 
 However, all 
 this requires 
 the $G$-actions of the involved inverted corner embeddings to be very special,  
 or partially relaxed to be only special
 \big (i.e. involved inverted corner embeddings are of the form $(M_\infty \otimes A, 
 \gamma \otimes \alpha)$ (very special) or their possibly general 
 $G$-action can be extended to the `unitization' $M_\infty(A^+)$
 (special)\big).   
 
We have already found explicit formulas for
 the product map 
 \be{eq196} 
 GK^G (\bfc,A)  \otimes_\Z  GK^G (A,B) 
 \rightarrow L_1 GK^G (\bfc,B) :  z \otimes w \mapsto z \cdot w  
 \en  
 in \cite{gk2},  
 and thus this note 
 advances the tools needed 
 to - potentially - explicitly compute algebraic analogs 
 of 
 $KK^G$-theory products showing up for instance  in the 
 well-known Dirac 
 dual-Dirac method by Kasparov \cite{kasparov1988,zbMATH06685586,arXiv:2210.02332}.     
  
  In $KK$-theory for 
  $C^*$-algebras, the method 
  of calculation of the above product 
   maps  
  \re{eq195} and \re{eq196} 
  for $X$ compact explicitly compute 
  the products 
  $$\calr KK^G \big (X; A, C(X) 
  \big ) \otimes_\Z  
  \calr KK^G \big (X;  C(X),   
  B \big ),$$  
$$\calr KK^G(X; C(X), 
A) \otimes_\Z  
  \calr KK^G(X; A,B),$$
  for `compatible' $KK^G$-theory $\calr KK^G$-theory defined by 
  Kasparov \cite{kasparov1988},  
  and these products are widely used in the literature, 
  given they involve one $K$-theory element as a factor. 
  Provided is however once again that one allows only very special or - partially -  special $G$-actions, which 
  trivially includes 
  all non-equivariant $\calr KK$-theory, but see also Section \ref{sec8} 
  for further comments.   
  Let us give as a short and incomplete sample list where 
  the later products - with arbitrary $G$-actions and not only very special ones - in $KK$-theory play a role: 
  
  Abdolmaleki and Kucerovsky 
  \cite{zbMATH07828316}, 
  Kaad and Proietti 
  \cite {zbMATH07498656},
  van den Dungen 
   \cite{zbMATH07173644}, 
  Debord and Lescure 
   \cite{zbMATH05530175},
  Nishikawa 
   \cite {zbMATH07089428},
  Echterhoff, Emerson  and Kim  
   \cite {zbMATH05248319}, 
  Kaad and Proietti  
   \cite {zbMATH07498656}, 
  Fok and Mathai 
  \cite {zbMATH07366975},
  Baldare   
 \cite{zbMATH07358369}, 
 Androulidakis and Skandalis   
 \cite{zbMATH07155161},
 Arici and Rennie    
\cite{zbMATH07127525},
Mesland and {\c{S}}eng{\"u}n   
\cite{zbMATH07227224}, 
 Zhizhang and Yu    
\cite{zbMATH06317160},
  Dadarlat    
\cite{zbMATH06295993},   
 Kasparov and Skandalis
\cite{zbMATH01997275},
and Kasparov
\cite{kasparov1988,zbMATH06685586, arXiv:2210.02332},
\if 0
and here do not count the `trivial' examples 
where a morphism in $K$-theory is just a single  homomorphism 
(i.e. projection))
\fi 
and here we 
did not 
list the many examples 
where a morphism is presented by a single algebra homomorphism,
as for example in 
Antonini, Azzali and Skandalis    
\cite {zbMATH07308574}, 
and 
 Arici and Kaad    
\cite{zbMATH07599758}.  

We would like to remark that we expect the results of these note
and $GK^G$-theory  overall could be extended to 
topological algebras like locally convex algebras, 
but we postpone this to future work. 
 Nevertheless, in anticipation of 
 that generalization, 
  in Section \ref{sec6} we compare Cuntz' $kk$-theory for locally convex 
   algebras \cite{cuntz},
  and its variant for pure algebras by 
  Corti\~nas and Thom \cite{cortinasthom} 
  and 
  from the later one the $G$-equivariant 
  version 
  $kk^G$ by Ellis \cite{ellis}, 
  with $GK$-theory by noting that there 
  exists a functor 
  $$\Gamma:GK^G \rightarrow kk^G$$ 
  which is `identical' 
  on algebra homomorphisms. 
  (This is the analogy to the well-known functor
  $KK \rightarrow E$ from $KK$-theory to $E$-theory - 
  by Higson \cite{higson} and Connes and Higson \cite{conneshigson,conneshigson2} -
  for the category of $C^*$-algebras.) 
   
  The short overview of this note is that in Section \ref{sec2} 
  we recall the 
  definition of $GK^G$-theory, and 
  in Section \ref{sec4} 
  proof the main results stated  above, 
  prepared by a lemma Section \ref{sec3}. 
  In Sections \ref{sec5} and \ref{sec7}  
  we review and summarize which products 
  can be explicitly computed in $GK^G$-theory so far. 
  In Section \ref{sec6} 
  we compare $kk$-theory with 
  $GK^G$-theory. In Section \ref{sec8} we state
  implications of this and other notes about $GK^G$-theory 
   to $KK^G$-theory for $C^*$-algebras.

  \if 0
  and in the last Section \ref{sec4} we rieview the products 
  for which explcitiely cmputable formulas are available so far, 
  meainig the the product of level-one elements is again a level-one element. This section conatins also a brief structural compare with $kk$-theory. 
  \fi



\section{$GK^G$-theory}			\label{sec2}  

In this section we recall the definition of $GK^G$-theory 
(``Generators and relations $KK$-theory with $G$-equivariance'') 
for algebras, an analogy to $KK^G$-theory for 
$C^*$-algebras \cite{kasparov1981,kasparov1988}. 
 
\if 0
before we shall compute the main result, the $K$-homology 
$K$-theory product in $GK^G$-theory  in Section ..
\fi

We at first remark that this note appears to work also over the category of rings as in \cite{gk} or algebras 
over various fields, but we prefer to select the category of algebras over $\C$ as our base case. 
\if 0
To be definite, we choose groupoid equivarinace in this note,
but inverse semigroup equivaraince works equally well. 
This is just an algebraic version of for instance groupoid equivarince 
in \cite{}
\fi 

\subsection{Homotopy}
One may select various notions of {\em homotopy},
for example continuous paths,
smooth paths, 
polynomial
paths, 
but 
rotation homotopies are typically sufficient,
and may be realized by polynomial homotopies \cite[Subsection 3.1]{cortinasthom}. 
Actually, the main results of 
this note do not need homotopy
at all.

\if 0
Instead of classes we prefer to work with sufficiant big sets, 
which in pratical terms are completely sufficient. 
We may 
vary the class of algebras - considering $C^*$-algebras or smooth algebras etc. - 
including switching  somtimes at the fringes of this note.   
\fi  
 
 \if   0
 \subsection{$G$-algebras}     \label{sec21}  

Let $G$ be a given discrete group. 
An algebra $A$ is called {\em quadratic} if 
each $a \in A$ 
can be presented as a finite sum of products 
$a_1 b_1 + \ldots + a_n b_n$ in $A$ for $a_i,b_i \in A$. 
Let 
$\Lambda^G$ be a  
{\em small} category where the object class
is a {\em set} of quadratic $G$-algebras over the field $\C$ (i.e. algebras $A$ equipped with a group homomorphism $\alpha : G \rightarrow \aut(G)$), 
sufficiently  
closed under all elementary constructions we need throughout
(direct sum, matrix algebra, algebra of compact and adjointable operators etc.), 
and the morphism sets are all $G$-equivariant algebra homomorphisms between 
them.
  For a $G$-algebra $A$, $A^+$ denotes its 
  unitization, and for a homomorphism 
$\varphi:A \rightarrow B$ in $\Lambda^G$, $\varphi^+ : A^+ \rightarrow B^+$ 
its canonical extension in $\Lambda^G$ if nothing else is said.  
Throughout, all homomorphisms and algebras are understood to live in 
$\Lambda^G$. 
The matrix algebras with coefficients in $\C$ 
are denoted by $M_n=M_n(\C)$, where $M_\infty =
\cup_{n \ge 1} M_n$.  
If we want to ignore any $G$-structure or $G$-equivariance 
on algebras or homomorphisms, we sometimes emphasize this by the adjective 
{\em non-equivariant}. 

\subsection{Corner embeddings}

\label{sec22}

A particular feature of $GK^G$-theory inherited from $KK^G$-theory 
is a wider class of corner embeddings 
than the usual matrix 
embeddings  
axiomatically declared to be invertible 
in $GK^G$ in Subsection \ref{subsec21}.(d). 
We always use the adjoint $G$-action on the algebra of $G$-equivariant $A$-module homomorphisms 
on $A$-modules, 
and on subalgebras thereof,
that is, $\hom_A \big ((\cale,S),(\calf,T) \big )= \big (\hom(\cale,\calf), \ad(S,T) \big )$, where 
$\ad(S,T)_g (X) :=   
T_g \circ X \circ S_g^{-1}$  
for $g \in G$.  
Recall from \cite[Definitions 2.1-2.3]{gk} that 
a {\em functional module} $(\cale,\Theta_A(\cale))$ over a $G$-algebra $A$ is a  
$G$-equivariant 
right $A$-module
$\cale$   together with a distinguished $G$-invariant functional space  $\Theta_A(\cale) \subseteq 
\cale^*:=\hom_A(\cale,A)$,  
closed under left multiplication by $A$,
comprised of $A$-linear $G$-equivariant maps from $\cale$ to $A$. 
Often functional modules are supposed to be cofull \cite[Definition 2.13]{gk}. 
The {\em adjointable operators} $\call_A(\cale) \subseteq \hom_A(\cale)$    
($G$-equivariant $A$-linear 
endomorphisms on 
$\cale$) 
is the subset of those 
operators $T$ satisfying $\phi \circ T \in \Theta_A(\cale)$ for all
$\phi \in \Theta_A(\cale)$, see \cite[Definition 2.6]{gk}, and forms a $G$-algebra.     
The {\em compact operators} $\calk_A(\cale) \subseteq \call_A(\cale)$ 
\cite[Definition 2.4]{gk} 
is  the $G$-invariant subalgebra  
of the adjointable operators  
which is the linear span of all  
elementary operators of the form $\Theta_{\xi,\phi}:=\xi \cdot \phi$ for $\xi \in \cale$ and $\phi \in 
\Theta_A(\cale)$. 
A {\em corner 
embedding} is 
a 
$G$-equivariant 
homomorphism $e: (A,\alpha) \rightarrow \calk_{(A,\alpha)} \big ((\cale,S) \oplus (A,\alpha)
 \big )$ defined by $e(a)(\xi \oplus b)= 0_\cale \oplus   
a b$ for all $\xi \in \cale$ and $a,b \in A$, see
\cite[Definition 3.2]{gk}. 

Typically, given a subalgebra $X \subseteq \call_B(\cale)$, its 
unitization can be naturally  
realized by embedding it into $X^+ \subseteq \call_B(\cale \oplus B)$. 
\fi

\subsection{Algebras and modules}
					\label{sec22}

To be definite, all algebras are regarded as algebras 
over $\C$. 
 Also, all algebras $A$ are supposed to be 
 {\em quadratik}, that is,  
each $a \in A$ 
can be presented as a finite sum of products 
$a_1 b_1 + \ldots + a_n b_n$ in $A$ for $a_i,b_i \in A$. 
(This is a weaker 
condition than having an approximate unit in discrete topology 
on $A$.)  
 
 {\bf (a)} 
 Let $G$ be a {\em discrete group}, or more generally, a {\em unital discrete 
 inverse 
 semigroup} (this essentially covers discrete groupoids $\calg$ with 
 finite base space $\calg^{(0)}$ by regarding $G:=\calg \sqcup \{0\}$ as 
 an inverse semigroup with null element). 
 Write $E \subseteq G$ for the subset of (automatically commuting) {\em idempotent} elements.  
A {\em $G$-action} on an algebra $A$ is a semigroup homomorphism
(unital if $G$ is) 
\be{eq302}
\alpha:G \rightarrow {\rm End}(A) \quad \mbox{such that} \quad  
\alpha_e(a) \cdot b = a  \cdot \alpha_e(b)
\en 
(``{\em compatibility}'')
for all $a,b \in A$ and $e \in E$. 
%
Set  $(\bfc,\chi)$ to be  
the free abelian algebra over $\C$ generated by $E$, 
subject to the 
multiplicative relations that hold in $E$, 
equipped  with the $G$-action 
$\chi_g(e) := g e g^* \in E$ for all $e \in E, g \in G$  \big (in $C^*$-theory one writes:
$\bfc:= C^*(E)$; if $G$ is a group $(\bfc,\chi) =(\C,\triv)$\big). 

{\bf (b)} 
A {\em $G$-action} on a right $(A,\alpha)$-module $\cale$ is a semigroup
homomorphism (unital if $G$ is) $S: G \rightarrow {\rm LinMap}(\cale)$ 
(linear maps) such that 
\be{eq303}
S_g(\xi \cdot a) = S_g(\xi)  \cdot \alpha_g(a) 
\quad  \mbox{and} \quad  S_e(\xi) \cdot a = \xi \cdot \alpha_e(a)  
\en 
(``{\em compatibility}'') 
for all $\xi \in \cale, a \in A, g \in G,
e \in E$. 
The 
$A$-module homomorphism sets 
are equipped 
   with the {\em adjoint $G$-action}, that is, 
   $$\hom_A \big ((\cale,S),(\calf,T) \big )= \big (\hom_A(\cale,\calf), \ad(S,T) \big ),$$ 
   where 
$\ad(S,T)_g (X) :=   
T_g \circ X \circ S_g^{-1}$  
for all $g \in G$.   

\if 0
The external tensor product $(\cale,S) \otimes (\calf,T)$ \cite{} 
   is defined to be $\bfc$-compatible, that is, 
   $S_e(\xi) \otimes \eta = \xi  \otimes T_e(\eta)$ 
   for all $\xi \in \cale , \eta \in \calf, e \in E$.
\fi 

{\bf (c)} 
From now on all algebras, modules 
and algebra homomorphisms 
are supposed to be $G$-equivariant 
(but not the module homomorphisms).  
Axiomatically, however, we require all modules homomorphisms 
in $\hom_A(\cale,\calf)$ to be equivariant with respect to the $G$-action restricted 
 to $E$, that is, to be {\em $E$-equivariant}. 
 (Recall this is automatic if $\cale = \cale \cdot A$.) 

 {\bf (d)} 
Algebras $(A,\alpha)$ 
and modules 
are automatically 
{\em $\bfc$-bimodules} with similar formal properties like they are $\C$-modules, induced by the given $G$-action 
on them restricted to $E$,  i.e.  
$$e* a:= a * e := \alpha_e(a) \qquad \forall a \in A,e \in E ,$$
and all $G$-equivariant algebra and module homomorphisms $\varphi$ are 
automatically 
{\em $\bfc$-bimodule maps}
\big (i.e. $\varphi(a * e) = \varphi(a) * e = e* \varphi(a)$\big ).   
%
Consistency of the bimodule 
structure is verified by the 
compatibility axiom of \re{eq302}.  

\if 0 
 All algebras $A$ and $A$-modules $\cale$ are regarded as 
 $\bfc$-modules 
 similarly  as they are $\C$-modules, that is, we require 
 that 
  multiplication with $A$ 
 commutes with the multiplication with 
 $\bfc$, for instance  
 $(a \cdot d) \cdot b = (a b) \cdot d$ and $(\xi \cdot d) b 
 = \xi (b \cdot d)$ 
 et cetera for $a,b \in A, d \in \bfc, \xi \in \cale$. 
   The internal tensor product $\cale \otimes_\pi \calf$ \cite{} 
   and external tensor product $\cale \otimes \calf$ \cite{} 
   is regarded to be $\bfc$-compatible, that is, 
   $(\xi \cdot c) \otimes \eta = \xi  \otimes (\eta \cdot c)$ 
   for all $\xi \in \cale , \eta \in \calf, c \in \bfc$. 
 \fi 

\if 0
   A {\em $G$-action} on $A$ (and analogously on  $\cale$) is a map 
   $$\alpha: \{ 
   (g,a) \in G \times A|\, g \in G, a \in A \cdot 1_{s(g)} 
   \subseteq A\}
   \rightarrow 
   A \cdot 1_{r(g)} \subseteq A$$  
   such that $\alpha_{g h} = \alpha_g \circ \alpha_h$ for
   $(g,h) \in G^{(2)}$. 
   \fi 
   
   \if 0
   All algebra homomoprhisms $A \rightarrow B$ and all $A$-module homomorphims 
   $\cale \rightarrow \calf$ 
   between $A$-modules are supposed to be 
   $\bfc$-module
   homomorphisms. 
   \fi
   
\subsection{Functional modules}
				\label{sec23}

{\bf (a)} 
 Recall from \cite[Definitions 2.1-2.3]{gk} that 
a {\em functional module} $(\cale,\Theta_A(\cale))$ over a $G$-algebra $A$ is a  
$G$-equivariant 
right $A$-module
$\cale$   together with a $G$-invariant distinguished 
functional space  $\Theta_A(\cale) \subseteq \hom_A(\cale,A)$,  
closed under left multiplication by $A$ 
(one sets $(a \phi)(b):= a\phi(b)$ for $a,b \in A$, $\phi \in \Theta_A(\cale)$). 
An algebra $A$ is regarded as the functional module  
$(A,\Theta_A(A) :=A)$ over itself, where $a (b) := ab$  for $a,b \in A$. 
   Throughout, all modules and algebras are regarded as functional modules. 

{\bf (b)}    
   The {\em adjointable operators} $\call_A(\cale) \subseteq \hom_A(\cale,\cale )$    
is the subset of those 
operators $T \in \hom_A(\cale,\cale )$ satisfying 
$$\phi \circ T \in \Theta_A(\cale)   \qquad 
   \forall \phi \in \Theta_A(\cale), $$ 
see \cite[Definition 2.6]{gk}, and forms a $G$-algebra.     
The {\em compact operators} $\calk_A(\cale) \subseteq \call_A(\cale)$ 
\cite[Definition 2.4]{gk} 
is  the $G$-invariant subalgebra  
(and two-sided ideal)  
of the adjointable operators  
which is the linear span of all  
elementary operators of the form $\Theta_{\xi,\phi}:=\xi \cdot \phi$ for $\xi \in \cale$ and $\phi \in 
\Theta_A(\cale)$.

{\bf (c)}
The module multiplication $\xi * e := S_e(\xi)$ for all $\xi \in (\cale,S)$
and $e \in E$ of Subsection \ref{sec22}.(c) 
is realized by adjointable operators $S_e$ in $\calz \call_{(A,\alpha)}\big ((\cale,S) \big )$ ($\calz$ is center), as 
by \re{eq303}, $S_e \in \hom_A(\cale)$, and 
$\phi \circ S_e 
= \alpha_e \circ \phi \circ S_e = e(\phi) \in \Theta_A(\cale)$, 
and finally $ T \circ S_e = S_e \circ T$ for all $T \in \call_A(\cale)$ 
by $E$-equivariance
 (standing assumption of Subsection \ref{sec22}.(c)). 

{\bf (d)} 
Note that $\call_A(A)$ is a two-sided multiplier algebra for $A \cong \calk_A(A)$ in analogy to $C^*$-theory \big (embedding $A \subseteq \call_A(A)$ by left multiplication operators: $a(b):= ab$ 
for all $a,b \in A$\big ), see also \cite[Lemmas 2.22 and 2.23]{gk}  

For any cardinality $n$, $M_n:= M_n(\C)$ denotes
the algebra of $n \times n$-matrices 
with almost all entries to be zero, and $M_n(A):= M_n \otimes A$.
%
If we want to ignore any $G$-structure or $G$-equivariance 
on algebras or homomorphisms, we sometimes emphasize this by the adjective 
{\em non-equivariant}. 
\if 0
We have
$$M_n(\calk_{(A,\alpha)}($$
\fi

\if 
 We consider algebras $A$ over $\C$ and right modules $\cale$ over $A$ 
 which are 
   equipped with groupoid actions. 
 That is each 
 A functional module is a $(\cale, \Theta)$
 In this note we use the concept of functional modules
 Normally, in $GK^G$-theory we use the concept of 
 functional modules, adjointable opertors and compact operators. 
  But in this note we strief them only in sideremarks, so that 
  we will not recall them, but instead only 
  define the adjointable operstoe
  
  For an 
  Define $\call_A(A \oplus B)$ 
  
  A functional modules $(\cale,\Theta_A($

 Let $G$ be a discrete groupoid with compact base space 
 $G^{(0)}$, and set $\bfd:= C(G^{(0)})$ (continuous complex valued 
 functions on $G^{(0)}$).  
 In discussions for $C^*$-algebras, $G$ is assumed to be
 a locally compact Hausdorff groupoid with compact base space
 $G^{(0)}$. 
 In analogy to the well-discussed $C^*$-case, 
 all algebras are considered to be $G$-equivariant  
 $\bfd$-algebras, which in partiuclar means 
 that $1_\bfd \cdot  A = A$, and all    
algebra homomoprhism $\varphi:A \rightarrow B$ 
are $\bfd$-compatible, that is, $\varphi (a \cdot d)= \varphi(a) \cdot d$  
We fix a 
set of algebras $R$, and 
\fi

\subsection{Unitization}				

Given 
a 
$G$-invariant subalgebra $X \subseteq 
\call_{(A,\alpha)} \big ((\cale,S)  \big )$ 
one defines 
its {\em unitization}   
$X^+$   
as the smallest $G$-invariant subalgebra 
$X^+$ of $\call_A \big ((\cale  \oplus A, S \oplus \alpha) \big)$ containing $X \oplus 0$ and 
$1:= 1_{\call_A(\cale \oplus A)}$,
which is $(X \oplus 0) + \bfc 1 $, and thus is up to homomorphism well-defined. 
%
(Example: algebra $A \subseteq \call_A(A)$, $A^+ 
\subseteq \call_A(A \oplus A)$.) 

\if 0
Given an $G$-equivariant algebra $(A,\alpha)$,  
one defines 
its unitization  
$A^+$   
as the smallest $G$-invariant subalgebra 
$A^+$ of $\call_A((A \oplus A, \alpha \oplus \alpha))$ containing $A \oplus 0$ and 
$1:= 1_{\call_A(A \oplus A)}$,
which is $(A \oplus 0) + \bfd 1 $. 
(This method works also inverse semigroup $G$-equivariantly
if $G$ is unital.) 
Typically, given a subalgebra $X \subseteq \call_B(\cale)$, its 
unitization 
may simpler   
realized by embedding it into $X^+ \subseteq \call_B(\cale \oplus B)$. 
\fi

\subsection{$GK^G$-theory}		\label{subsec22}

$GK^G$-theory is 
the {\em universal} 
(possibly polynomial) {\em homotopy invariant}, {\em `matrix-stable'} 
(actually more general)  
and {\em splitexact} theory associated to a set 
of algebras, 
conveniently defined by 
generators and relations \cite{gk}. 

{\bf Generators.} 
Object class is a 
a given {\em set} $R$ of $G$-equivariant algebras $(A,\alpha)$, 
sufficiently big in order to contain all constructions needed,  
and generator morphism class is the {\em set} $\Theta$
comprised 
of 

{\bf (a)} all $G$-equivariant $\bfc$-module algebra homomorphisms between objects of $R$, 
%

{\bf (b)} the synthetic 
morphisms  $e^{-1}: \calk_{(A,\alpha)} \big ((\cale \oplus A, S \oplus A)
\big ) \rightarrow (A,\alpha)$ for 
each  (algebra homomorphism) {\em corner embedding} 
$$e: (A,\alpha) \rightarrow \calk_{(A,\alpha)} \big ( (\cale\oplus A, 
 S \oplus \alpha)
 \big )
 :e(a)(\xi \oplus b)= 0_\cale \oplus   
a b
 $$ 
for all $\xi \in \cale$ and $a,b \in A$, see
\cite[Definition 3.2]{gk}
(the class of functional modules $\cale$ restricted 
to a sufficiently big {\em set} of modules), and  
\if 0
{\bf (b)} the snthetical 
morphisms  $e^{-1}: (M_n(A), \gamma) \rightarrow (A,\alpha)$ for 
each  
{\em corner embedding} 
$e : (A,\alpha) \rightarrow (M_n(A),\gamma)$, that is $G$-invaraint algebra homomoprhisms defined by $e(a) = a \oplus 0 \oplus 0 \oplus ...$, and 
\fi 

{\bf (c)} the synthetic 
morphisms $\Delta_{\cals}: X \rightarrow J$, (shortly called 
$\Delta_s$) 
associated to each short splitexact sequence 
of $G$-algebras $J,X,A \in R$ and $G$-equivariant homomorphisms $\iota, f,s$   
(the entered $\Delta_s$ is here not part of the definition) 
\begin{equation}   \label{eq8}
\xymatrix{ 
\cals: 
0   \ar[r]^h     &J     
 	\ar[rr]_{\iota}  &  & X       \ar@<.5ex>[rr]^f    
 	\ar@<-.5ex>[ll]_{ \Delta_{s}}   
& &   
A \ar[ll]^{s}   \ar[r]   
& 0        . 
}
\end{equation} 

{\bf Relations.} 
$GK^G$-theory is then 
the additive category with object class $R$ 
and morphism class 
consisting of 
the 
collection of all valid finite strings 
in the 
alphabet 
$\Theta \cup \{+,-,\cdot,(,)\}$ - valid means
they are meaningful expressions in an additive category 
like $x \cdot y -(a -  e^{-1} \cdot \Delta_s)$  
and the source and range objects fit together -,  
set-theoretically 
divided out by  the generated 
equivalence 
relation which identifies two strings 
if they differ  somewhere by one of  the following elementary equivalences:

{\bf (m)} The associativity and distribution laws, 

{\bf (n)} the abelian and  neutrality laws $x+y=y+x$, $x+0=x$, $x-x=0$ and 
$\id_A \cdot x = x \cdot 
\id_A =x$ 
for all strings $x,y$ (everywhere composability provided), 

{\bf (o)} 
the laws that 
$ f \cdot g = g \circ f$ for all composable algebra homomorphisms $f,g \in \Theta$,   

{\bf (p)} the law that $e^{-1} \in \Theta$ is an inverse element for 
each corner embedding $e \in \Theta$ 
as described in (b), 

{\bf (q)} 
the law that for a homotopy  $f:(A,\alpha) \rightarrow (A,\alpha) \otimes \big (C([0,1]),\triv \big )$ 
\big (i.e. algebra homomorphism, continuous path space $C[0,1]$ may be replaced by
smooth path space $C^\infty([0,1])$ or polynomial path spaces $P(\{0,1\})$\big ) 
one has $f_0 = f_1$ (evaluation maps), 
and 

{\bf (r)} 
the laws that 
$$\id_X = \Delta_s \cdot \iota  + f \cdot s   \quad \mbox{and} 
\quad 
\iota \cdot \Delta_s = \id_J$$    
for each splitexact sequence 
\re{eq8}.  

{\bf Notation.} 
The notation in $GK^G$ (= category $GK^G$-theory) is from left to right, that is, 
 $\xymatrix{  A \ar[r]^{x} & B \ar[r]^y & C}$ composes 
 to $x \cdot y =: xy$, and we solely work with 
 string representatives 
 and do  not use class brackets.

\subsection{Extended double splitexact sequence}    
\label{subsec11}  

\if 0
{\bf (I)}
There exists an isomorphism $GK^G \cong KK^G$ of categories 
in the $C^*$-algebraic case, 
see..., 
under which Kasparov cycles in $KK^G$ translate to extended 
splitexact sequences. 
They turn out to be quite useful 
bricks of $GK^G$-theory as every generator of $\Theta$ may 
be presented by them, and every morphism of $GK^G$ 
as finnite product of them, and they cover the $K$-theory 
set $GK^G(\bfd,A)$.  
\fi 


An {\em extended double split exact sequence}, see \cite[Definition 9.1]{gk},  
is a diagram 
\be{eq700}
\xymatrix{ 
B  \ar[r]^e      &J   
 	\ar[rr]_{\iota}   & & X        \ar@<.5ex>[rrr]^f    
 	\ar@<-.5ex>[ll]_{ \Delta_{s_-}}   
& & &   
 A \ar[lll]^{s_\pm}    
 }
 \en 
 
in $GK^G$, 
with all objects $A,X,J, B$ being given equivariant 
algebras and 
the  arrows $e, \iota, 
s_+,s_-$ being given equivariant algebra homomorphisms,
such that 

{\bf (a)} $e$ is a corner embedding 
(Subsection \ref{subsec22}.(b)), 

{\bf (b)}  $\iota$ is injective and $\iota(J)$ is an ideal in $X$, 

{\bf (c)}  $X$ is an algebra which is exactly as a set 
the sum $X= \iota(J) + s_-(A)$, and this is a linear direct sum, 
and 

{\bf (d)}  
$s_+,s_-$ are injective and 
\be{middle2}
s_+(a) - s_-(a) \in \iota(J)
\en 
for all $c \in A$. 

Redundantly, the map $f$ in the diagram is always 
forced to be the function, which automatically is an equivariant algebra homomorphism with kernel $\iota(J)$, 
defined by 
$$f \big (\iota(j) + s_-(a) \big ):= a$$  
for all $j \in J$ and $a \in A$.   
Thus, automatically $f$ is surjective and $s_\pm$ are 
injective splits for it.  
Further redundantly, the entered morphism $\Delta_{s_-}$ 
exists then automatically by the splitexactness axiom
of Subsection \ref{subsec22}, (c) and last (r).  
 
 The only deviation from this 
  conventions is that $s_+$ is not necessarily equivariant,
 but actually 
 
 {\bf (e)}  one is by assumption provided with 
 equivariant upper left and lower right corner embeddings 
\begin{equation} 	\label{eq10}
\xymatrix{
	(X,\gamma_-) \ar[rr]^{f_1}  & & (M_2(X),\delta)  &&  \ar[ll]^{f_2}  
		(X,\gamma_+)  } , 
	\end{equation} 
respectively, and 

{\bf (f)}  each $s_\pm : \bfc \rightarrow (X,\gamma_\pm)$ 
are equivariant ($-$ and $+$ separately), 
such that 

{\bf (g)}  $M_2(J)$ is a $G$-invariant subalgebra under $\delta$ 
(this is automatic by \cite[Corollary 6.4]{gk})  
such that the quotient $G$-action on 
$M_2(X)/M_2(J) \cong M_2(X/J)$ 
is of the form $\id_{M_2} 
\otimes \alpha$ for some $G$-action $\alpha: G \rightarrow 
{\rm End}(X/J)$, confer 
\cite[Lemma 6.4]{gk}.  

The {\em morphism associated} to such an extended double splitexact 
sequence is the morphism 
\be{eq29}
s_+ \na_{s_-} :=   s_+ \cdot f_2 \cdot f_1^{-1} \cdot \Delta_{s_-} \cdot e^{-1}  . 
\en

(In other words, one reads the diagram \re{eq700} from right to left, 
that is, 
at first one goes 
along $s_+$, then one changes the $G$-action 
from $\gamma_+$ to $\gamma_-$  
with $f_2 \cdot f_1^{-1}$, then one applies the split $\Delta_{s_-}$,
and finally one `exits' the diagram with the inverse corner 
embedding $h^{-1}$. Note that $\na_{s_-}$ is just an obvious imprecise abbreviation.) 

The $G$-action $\delta$ is called the {\em $M_2$-action} of the 
extended splitexact sequence, 
and $M_2(X)$ its {\em $M_2$-space}. 
Observe, that $\delta$ determines the $G$-actions 
of all algebras of the extended splitexact sequence,  
given all the injective algebra homomorphisms that map 
into the $M_2$-space.    
We use the diagram style and the morphism style interchangeable. 
Elements of the form \re{eq29} are called 
{\em level-one morphisms} or {\em $L_1$-elements}. 
Their collection is coined $L_1 GK^G$ or just $L_1$. 

Diagrams like \re{eq700} are automatically understood to be extended 
double splitexact sequences, even if $\Delta_{s_-}$ is not entered.

\subsection{$KK^G$-theory}		\label{subseckk}

The results of this note may also be interpreted one-to-one in 
$KK^G$-theory, because
if we choose in the definition 
of $GK^G$-theory, Subsection \ref{subsec22},  
for $R$ a suitable set of $C^*$-algebras,
and axiomatically allow {\em corner embeddings} to be the algebra
homomorphisms 
$$e: (A,\alpha) \rightarrow (\calk \otimes A, \gamma): e(a) = e_{11} \otimes a$$ 

 in axioms Subsection \ref{subsec22}, (b) and (p), and $G$ 
 even possibly be a  locally 
compact 
Hausdorff groupoid with {\em compact} base space $G^{(0)}$,  
or an unital inverse semigroup $G$, then 
$GK^G$-theory  is exactly the known $KK^G$-theory, 
that is $GK^G \cong KK^G$, see   
\cite{gk1} or \cite{aspects} for more on this. 
 
 Thus   
 identifying 
 $KK^G = GK^G$ in $C^*$-theory,  
 one has 
 $L_1 GK^G = GK^G$, see  \cite[Theorem 14.3]{aspects}.  
 The quick informal {\em imprecise} translation is that $s_+ \na_{s_-} \in L_1 GK^G(A,B)$ 
 corresponds to something like $[s_- \oplus s_+,\mbox{Flip}] \in KK^G(A,B)$, but see
 \cite[Sections 4 and 9]{aspects} for the precise translation. 
 
 
 \subsection{$M_2$-action} 		\label{sec13}
 
 Typically the
  $M_2$-action $\delta$ is given by restriction as follows:  
  \be{eq77}
 \big (M_2(X),\delta) \subseteq M_2(\call_{A} (\cale) \big ) \cong
 \Big (\call_{(A,\alpha)} \big  ((\cale,S) \oplus (\cale,T) \big ), \ad (S \oplus T) \Big ),
 \en 
 \if 0
 \be{eq77}
 \big (M_2(X),\delta) \subseteq M_2(\call_X (X) \big ) \cong
 \Big (\call_{(X,S)} \big  ((X,S) \oplus (X,T) \big ), \ad (S \oplus T) \Big ),
 \en 
 \fi
 
 where $S$ and $T$ are 
 $G$-module 
 actions on the right $(A,\alpha)$-module $\cale$, 
 and $X$ is a subalgebra of $\call_A(\cale)$.  
 Note that the formula for the $G$-action is 
 \be{eq300}
 \ad (S \oplus T)_g 
 \left ( \begin{matrix}  a  &  b  \\  
 c  &  d
  \end{matrix}  \right  )    =  
  \left  ( \begin{matrix}  S_g  a  S_{g^{-1}}  &   S_g  b   T_{g^{-1}}  \\  
 T_g  c S_{g^{-1}} &  T_g  d   T_{g^{-1}}
  \end{matrix}  \right  )   
  \en
  for all $a,b,c,d \in \call_A(\cale)$ and $g \in G$ (here, $S_g a S_{g^{-1}} =  S_g  \circ a  \circ S_{g^{-1}}$ et cetera). 
  \if 0
 $$\ad (S \oplus T)_g 
 \Big ( \begin{matrix}  Z_{11}  &  Z_{12}  \\  
 Z_{21}  &  Z_{22}
  \end{matrix} \Big )    =  
  \Big ( \begin{matrix}  S_g \circ Z_{11} \circ S_{g}^{-1}  &   S_g \circ Z_{12}  \circ T_{g}^{-1}  \\  
 T_g \circ Z_{21} \circ S_{g}^{-1} &  T_g \circ Z_{22}  \circ T_{g}^{-1}
  \end{matrix} \Big )$$
 \fi 

  \if 0  
 where $(X,S)$ is a $G$-algebra regarded then also 
 as a right $X$ module over the algebra $(X,S)$, and $T$ (and $S$)  are 
 $G$-module actions on this right $(X,S)$-module $X$,  
 and it turns out that $M_2(X)$ is invariant under $\ad (S \oplus T)$, 
 so $\delta$ valid, if and only  if 
 \be{eq17}
 s_-(a) (S_g \circ T_{g^{-1}} - S_g \circ S_{g^{-1}}) \in \iota(J), 
 \qquad \forall a \in A, g \in G 
 \en 
 where we regard $\iota_(J)$ as a subset of the compact operators 
 $X \cong \calk_X(X)$.  
 Reversely, a given valid $\delta$ can alwas be extended to a $G$-action $\ad(S \oplus T)$ as above 
 by  
 \cite[Lemma 6.3.(iv)]{gk}. 
\fi 

Actually, every $M_2$-action is of the above form, 
see 
\cite[Lemma 6.3]{gk}. 

\subsection{Commuting diagrams}			\label{subsec24}

Typically, identities in 
$GK^G$-theory are proven by 
commuting diagrams between two $L_1$-morphims:  

Consider the first two lines of the diagram of Theorem 
\ref{thm1} below and assume that 
they are any extended double splitexact sequences $s_+ \na_{s_-}$ 
and $u_+ \na_{u_-}$, respectively, 
as explained above. 
The second line is allowed to 
consist of {\em any} other algebras and 
algebra homomorphisms and need not be the entered ones. 
Assume that the diagrams are connected by {\em any} morphisms
$k,l,m,e$ as entered in the diagram, where $m$ is an algebra homomorphism. 
Then if this picture commutes in 
$GK^G$-theory, that is 
$$h l = k H, \quad \iota m = l \iota_n, \quad   
s_+ m = e u_+, \quad \mbox{and}   \quad 
s_- m = e u_-$$ 
(the $f$s and $\Delta$s  can be ignored), 
and the map $M:=m \otimes \id_{M_2}$ is an {\em 
equivariant} algebra homomorphism between the two 
$M_2$-spaces of the two lines of the diagram, 
then one has 
$$ s_+ \na_{s_-} \cdot k =  e \cdot u_+ \na_{u_-} . $$

The proof follows directly from the similar \cite[Lemma 4.3]{gk}, 
because the requirement 
$f s_- m = m (f \otimes \id_{M_n}) u_-$ 
is easily checked by writing elements of $X$ as 
$x= \iota(j) + s_-(c)$ and using the formula for the $f$s 
from Subsection \ref{subsec11}. 
 
\subsection{Middle space} 			\label{subsec28}


Let $s: (A,\alpha) \rightarrow (M,\gamma)$ be a given algebra homomorphism 
and $(J, \gamma|_J)  \subseteq 
(M,\gamma)$  be a 
$G$-invariant, two-sided ideal in $M$. 
Then one constructs the 
$G$-invariant, $G$-equivariant  {\em subalgebra} 
of  $(M \oplus A, \gamma \oplus \alpha)$ given by  
%
\begin{equation}   \label{middle}
M \square_s A := \{(j \oplus 0) + (s(a) \oplus a) 
 \in M \oplus A
  | \; j \in J, a \in A\}   . 
  \end{equation} 
  
  (The ideal $J$ is imprecisely not notated. 
  We may also notate it as $M \square_{s \oplus \id_A} A$ 
  if it is more convenient.)  
 Linearly this is a direct sum $J \oplus A$, and thus we define 
 the {\em algebra} $(J \stackrel{s}{\oplus} A, \gamma|_J \oplus \alpha)$ which is the linear space $J \oplus A$ equipped with the multiplication 
 $$(j_1 \oplus a_1) \cdot (j_2 \oplus a_2) 
 :=  j_1 j_2 + j_1 s(a_2) + s(a_1) j_2 \oplus a_1 a_2  . $$

  
  Thus we get an equivariant algebra isomorphism
  \begin{equation}   \label{zeta}   
  \zeta:  (J \stackrel{s}{\oplus} A, \gamma \oplus \alpha) 
  \rightarrow (M \square_s A , \gamma \oplus \alpha) : 
   \zeta (j \oplus a) := j + s(a) \oplus a   .  
  \end{equation}    
  
  \if 0
  In general, 
  if $\gamma$ and $\alpha$ are any algebra or module $G$-actions 
  on $M$ and $A$ respectively, then we set
  (the following $G$-action on $M \square_{s_-} A$) 
  $$\gamma \square \alpha 
   := (\gamma \oplus \alpha)|_{M \square_{s_-} A}$$
   \fi
   
 \subsection{Special $G$-actions}     \label{subsec26} 
 
 A $G$-action  $\delta$ on $M_n(A)$ is called {\em special} 
 if it can be extended to a $G$-action $\delta_2$ on $M_n(A^+)$
 \big (the `{\em unitization}' of $M_n(A)$\big ). 
  It is called {\em very special} if it is even of the form $\delta
  = 
  \gamma \otimes \alpha$ for $M_n= (M_n,\gamma)$ and $A=(A,\alpha)$. 
    
\big (Consequently,  a canonical matrix corner embedding $e: (A,\alpha) \rightarrow (M_n(A),\delta)$ and an $M_2$-space $(M_2(A),\delta)$, respectively,  
 are 
 called special 
 if $\delta$ is, et cetera.\big )
 
 $GK^G$-theory is said to be {\em 
 very special} 
 if only 
 very special, canonical matrix corner embeddings are 
 axiomatically declared to be invertible 
 in axioms Subsection \ref{subsec22}, (b) and (p).  

 \if 0
 If $A$ is unital, then $e$ is always special because $M_n(A^+) \cong M_n(A) \oplus M_n(\C)$ can be extended with the trivial action on the second summand.  
 \fi

 If the $M_2$-space of an $L_1$-element 
 is very special, then it can only 
 have the $G$-action of the form $\id_{M_2} \otimes \gamma$, 
 and thus one can omit the corner embeddings $f_1,f_2$ in \re{eq10} 
 and \re{eq29} at all, because $f_1 = f_2$ in $GK^G$-theory 
 by an ordinary rotation homotopy.

 \if 0
 We see by identity \re{eq17} (and have discussed this in more details
 in ...), that an $M_2$-space is special if and only if 
 \fbox{$    S_g \circ T_{g^{-1}} - S_g \circ S_{g^{-1}} \in \iota(J), $} 
 for all $g \in G$. 
 
 das stimmt so nicht, man muss für $\square$ betrachten
 \fi  
 
 \if 0
 identity 
 \re{eq17} holds with $s_-(a)$ erased (because it must hold for 
 $s_-(1)=1$ sloppily said). 
 \fi

 \subsection{Improved matrix corner embeddings}
				\label{subsec211} 
				
 
 \if 0
 To construct a Kasparov cycle one typically takes some 
 Hilbert module $\cale$ and considers algebra homomorhisms 
 into $\call_A(\cale)$ and opertors in that algebra. The distingusihed 
 ideal is then $\calk_A(\cale)$. To translate and handle such situations 
 in $GK^G$-theory,
 whe have also considered to axiomatically declare corner 
 embeddings of the form $e: A 
 \rightarrow \calk_A(\cale \oplus A)$ 
 to be invertible in $GK^G$-theory \cite[Defintion]{gk}. 
 (Note this is more general than defined in Subsection \ref{subsec22}.)   
 \fi
 
 Up to now,   we can show that $\bfz \cdot e^{-1} \in L_1$ 
 by \cite[Corollary 8.2]{gk} 
 but not $e^{-1} \cdot \bfz  \in L_1$
 for $\bfz \in L_1$  
 and {\em general} corner embeddings $e : A \rightarrow \calk_A(\cale \oplus A)$ (cf. Subsection \ref{subsec7}),  
 but by our recent preprint \cite{gk2} 
 for many important modules appearing 
 in 
 practice, namely those $\cale$ of the form 
 \begin{longtable}[ht]{ccc}   
   $(A \otimes_\pi B, \alpha \otimes \beta)$ &  
   $( A \otimes B , \alpha \otimes \beta)$   & 
   $(\oplus_i A_i, \oplus_i \alpha_i)$ \\ 
   change of coefficient algebra & external tensor product 
  & direct sum  \\
 \end{longtable}
 and  mixed combinations 
 thereof,  $e$ is invertible 
 and 
 both products $\bfz \cdot e^{-1} \in L_1$
 and $e^{-1} \cdot \bfz \in L_1$ are computable within 
 very special $GK^G$-theory 
 for the category of discrete algebras having approximate units 
 and when $G$ is a group.  
 
 In these cases 
 $$e^{-1} = \phi \cdot f^{-1}$$
 for an injective algebra 
 homomorphism $\phi$  and $f: (A,\alpha) \rightarrow 
 ( M_n \otimes A, \gamma  
 \otimes \alpha)$ a canonical, very special corner embedding 
 by \cite[Proposition 2.11, Proof item 
 (c)]{corner} 
 This format will be taken into account in 
 Theorem \ref{thm1} below.

 For $G$ a compact group we have the following 
 result from \cite{corner}, potentially particularly useful for $KK^G$-theory:  
 
 \begin{lemma}[Compact $G$ and very special actions]
 \label{lemma21} 
  If $G$ is a finite group, then $GK^G$-theory 
  for the category of discrete algebras having approximate units,   
  and where only 
  canonical matrix corner embeddings $A \rightarrow \calk_{(A,\alpha)}((A,\alpha) \oplus (A^n, \gamma))$ (any $G$-action $\gamma$, $n$
  any permitted cardinality) are 
 axiomatically declared to be invertible 
 in axioms Subsection \ref{subsec22}, (b) and (p),   
%
  is 
  very special. 
The axiomatically declared invertible corner embeddings $e$ in $GK^G$
fulfill the same format 
$e^{-1} = \phi \cdot f^{-1}$ as described before.  
\end{lemma}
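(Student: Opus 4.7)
The plan is to show that every corner embedding $e : (A,\alpha) \to \calk_{(A,\alpha)}\bigl((A,\alpha) \oplus (A^n,\gamma)\bigr)$ which is axiomatically inverted admits a factorization $e \cdot \phi = f$, with $\phi$ an injective equivariant algebra homomorphism and $f$ a canonical very special corner embedding. The desired identity $e^{-1} = \phi \cdot f^{-1}$ then follows by cancelling $e$ on the left, using axiom (p) of Subsection~\ref{subsec22}. Once this is achieved for every such $e$, the resulting $GK^G$-theory coincides with the one in which only very special corner embeddings are axiomatically inverted, so it is very special in the sense of Subsection~\ref{subsec26}.

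\textbf{Step 1} (Fell-type absorption for finite $G$). Since $G$ is finite, equip the right $(A,\alpha)$-module $A^n \otimes \C[G]$ with two natural $G$-actions: the twisted action $\gamma \otimes \lambda$ and the very special action $(1_{\C^n} \otimes \alpha) \otimes \lambda$, where $\lambda$ denotes the left regular representation. An intertwining $A$-linear isomorphism between them is given (for unital $A$) by
\[
W(\xi \otimes \delta_g) \defn \gamma_{g^{-1}}(\xi) \otimes \delta_g,
\]
whose equivariance follows from the cocycle relation for $\gamma$ and whose $A$-linearity uses that, as plain right $A$-modules, both sides are just $A^n \otimes \C[G]$. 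For non-unital $A$ with approximate units one interprets $W$ through the approximate unit and checks that it extends to a genuine equivariant isomorphism in $\call_A$ (and restricts to one of compact operators).

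\textbf{Step 2} (Summand inclusion and ampliation). Averaging over $G$ provides an equivariant $A$-linear inclusion of $(A^n,\gamma)$ as a direct summand of $(A^n \otimes \C[G],\gamma \otimes \lambda)$, and via $W$ as a summand of the very special module $(A^n \otimes \C[G], (1_{\C^n} \otimes \alpha) \otimes \lambda)$. Compression by the associated $G$-invariant projection and ampliation by the identity on the first $A$-summand yield an injective equivariant algebra homomorphism
\[
\phi : \calk_A\bigl(A \oplus (A^n,\gamma)\bigr) \longrightarrow \calk_A\bigl(A \oplus (A^n \otimes \C[G], \mathrm{vs})\bigr)
\]
that intertwines the two corner embeddings of $A$, i.e., $e \cdot \phi = f$ with $f$ the canonical very special corner embedding on the right-hand side.

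\textbf{Step 3} (Conclusion). From $e \cdot e^{-1} = \id_A = f \cdot f^{-1} = e \cdot \phi \cdot f^{-1}$ and the invertibility of $e$ in $GK^G$, left cancellation gives $e^{-1} = \phi \cdot f^{-1}$, the required format. Since every axiomatically invertible corner embedding in the given theory has this form, replacing the full family of corner embeddings by only the very special canonical matrix ones does not change $GK^G$, proving that the theory is very special.

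The main obstacle is the rigorous construction of the intertwiner $W$ and the associated projection in the non-unital setting: the formula $W(\xi \otimes \delta_g) = \gamma_{g^{-1}}(\xi) \otimes \delta_g$ must be made sense of through approximate units, and checking $A$-linearity in the presence of the $\alpha$-twist requires tracking the cocycle $\{u^g_{ij}\}$ implementing $\gamma$ on $A^n$; the finiteness of $G$ is essential so that averaging is a finite sum landing in $\calk_A$ rather than a mere limit. A second technical point is verifying that $\phi$ genuinely lives in the allowed morphism class $\Theta$ of Subsection~\ref{subsec22} and is $\bfc$-bimodular, which follows from $E$-equivariance (automatic by the standing assumption of Subsection~\ref{sec22}.(c)).
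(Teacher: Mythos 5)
Your strategy --- exhibit an equivariant factorization $e\cdot\phi=f$ with $f$ a very special canonical corner embedding, via stabilization by $\C[G]$ and averaging over the finite group --- is the right one, and it is essentially the content of the external result that the paper's own proof consists of citing (Corollary 5.3 and Proposition 2.11 of the reference on corner embeddings); so you are reconstructing an argument the paper does not display. The concrete problem is your intertwiner. The map $W(\xi\otimes\delta_g)=\gamma_{g^{-1}}(\xi)\otimes\delta_g$ is not right $A$-linear: by the compatibility \re{eq303} one has $W\big((\xi\otimes\delta_g)\cdot a\big)=\gamma_{g^{-1}}(\xi)\,\alpha_{g^{-1}}(a)\otimes\delta_g\neq W(\xi\otimes\delta_g)\cdot a$, and it intertwines $\gamma\otimes\lambda$ with $\id_{A^{n}}\otimes\lambda$, which is neither the very special action $(1_{\C^{n}}\otimes\alpha)\otimes\lambda$ nor even an admissible module action over $(A,\alpha)$ in the sense of \re{eq303}. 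The correct map is $W'(\xi\otimes\delta_g)=\alpha_g^{\oplus n}\big(\gamma_{g^{-1}}(\xi)\big)\otimes\delta_g$ (entrywise $\alpha_g$ on $A^{n}$): this is $A$-linear and carries $\gamma\otimes\lambda$ to $(1_{\C^{n}}\otimes\alpha)\otimes\lambda$. Note that $W'$ is purely algebraic, so the approximate-unit machinery you invoke is not needed for the intertwiner itself; that hypothesis enters only in the adjointability and complementedness checks in the functional-module framework. Your averaging step is fine: $\xi\mapsto\xi\otimes p$ with $p=\tfrac{1}{|G|}\sum_g\delta_g$ is an equivariant $A$-linear inclusion onto a complemented summand of $(A^{n}\otimes\C[G],\gamma\otimes\lambda)$.

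The second issue is Step 3. Left-cancellation of $e$ is valid in the theory where $e$ is axiomatically invertible, so $e^{-1}=\phi\cdot f^{-1}$ does follow there, giving the stated format. But the assertion that the theory \emph{is very special} requires $e$ to be invertible already in the theory where only very special corner embeddings are inverted, and there $e\cdot(\phi\cdot f^{-1})=f\cdot f^{-1}=\id_A$ only exhibits $\phi\cdot f^{-1}$ as a right inverse of $e$; you must still prove $(\phi\cdot f^{-1})\cdot e=\id$ in very special $GK^G$ before the invertibility axiom for $e$ can be declared redundant. The paper's one-line proof glosses over this point as well by deferring it to the cited reference, but your write-up does not close it either.
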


\begin{proof}

That one gets very special $GK^G$-theory follows from the fact 
that  
one can write $e^{-1} = \phi \cdot f^{-1}$ for a very special corner 
emebdding $f$, which shows 
that $e$ is already invertible in very special $GK^G$-theory 
and its invertibility axiom thus redundant there. 
That this format is met for $e^{-1}$ is proven in
\cite[Corollary 5.3]{corner}, proof step (b), refering to 
\cite[Proposition 2.11]{corner}, proof step (c), where finally the 
formula appears. 
\end{proof}

  \if 0
  \subsection{Unitization}
  
  In $\bfc$-compatible $GK^G$-theory we may define 
  the unitization of an algebra $A$ as 
  $A \oplus \bfc \subseteq \call_A( A \oplus A)$ 
  \fi 
  
  \if 0
Given an $G$-equivariant $C^*$-algebra $A$,  
one replaces 
its unitization  
$\tilde A$ of \cite[Definition 3.3]{buniversal} (for inverse semigroups 
equivariant algebras)   
by the smallest $G$-invariant $C^*$-subalgebra 
$A^+$ of $\call_A((A \oplus A, \alpha \oplus \alpha))$ containing $A \oplus 0$ and 
$1:= 1_{\call_A(A \oplus A)}$,
which is $(A \oplus 0) + C(X) 1 $. 
(This method works also inverse semigroup $G$-equivariantly
if $G$ is unital.) 
\fi 

 \if 0
 $B \otimes_\pi A$ (change of coefficiant algebra, $A \otimes B$ (external tensor products), 
 and $\oplus_i A_i$ (direct sums), and arbitary mixed combinations 
 thereof. 
   \fi

\section{Some lemmas} 			\label{sec3} 
 
 In this section we collect some lemmas needed 
 in the next 
 section of main results. 
\if 0
 In  this section we collect lemmas needed in the proof 
 of the main theorem below. 
 \fi
 
  The next lemma says that the product  
 of an inverse {\em special} corner embedding 
 with a level-one element 
 is a level-one element again:  

 \begin{lemma}			\label{lemma31} 
 
 Any level-one element $s_+ \na_{s_-} \in L_1 GK^G$ 
 with special $M_2$-space 
 $$\big ( M_2 \big (X),  \ad(S\oplus T) \big)$$  
 can be fused with the inverse of a special 
 corner embedding $e$ to a level-one element
 $u_+ \na_{u_-}=  e^{-1} \cdot s_+ \na_{s_-} \in L_1 GK^G$ 
 (provided composability), see
 the first two lines of the diagram of 
 Theorem \ref{thm1} ($u_\pm$ have the indicated formulas).  
 
 If the matrix size $n$ is finite,    
 then		
 $u_+ \na_{u_-}$ has  special $M_2$-space. 

 If $e:(\bfd, \delta) \rightarrow (\bfd ,\delta) \otimes (M_n, 
  \sigma)$ is even very special,
  then $s_+ \na_{s_-}$ may have any (possibly non-special) $M_2$-action, 
  and 
    the $M_2$-space of  $u_+ \na_{u_-}$ 
 is 
 $$\big ( M_2(X \otimes M_n), \ad(S \otimes \sigma  \oplus 
 T \otimes \sigma) \big ), $$   
 
 which is very special if the $M_2$-space of $s_+ \na_{s_-}$  
 is very special.    
 
 \if 0
 If $e$ is very special, then 
 for any $s_+ \na_{s_-}$ the $M_2$-action of $u_+ \na_{u_-}$
  is very special. 
  \fi
  
 \if 0
 Given any level-one lement $s_+ \na_{s_-}$ 
 - with special $M_2$-space - as in the first 
 line of the diagram of propositin, and a special 
 corner emebdding $e$ as in the diagram
 \fi 
 \end{lemma}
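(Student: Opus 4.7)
The plan is to exhibit an explicit extended double splitexact sequence whose associated $L_1$-morphism is $e^{-1} \cdot s_+ \na_{s_-}$, and then to verify the identity $u_+ \na_{u_-} = e^{-1} \cdot s_+ \na_{s_-}$ in $GK^G$ by the commuting-diagram technique of Subsection \ref{subsec24}. Since $e$ is a (very) special corner embedding whose target is a matrix amplification of $\bfd$, the natural candidate is the sequence obtained by tensoring the original extended double splitexact sequence of $s_+ \na_{s_-}$ with $(M_n,\sigma)$ (in the very special case), or by amplifying with $n$ copies using the extended action on the unitization (in the purely special, finite-$n$ case).

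Concretely, in the very special case $e:(\bfd,\delta)\rightarrow (\bfd,\delta)\otimes(M_n,\sigma)$, I would take the new ideal to be $J\otimes M_n$, the new total algebra to be $X\otimes M_n$, the new splits to be $u_\pm := s_\pm\otimes \id_{M_n}$, and the new corner embedding $B\to J\otimes M_n$ to be the composite of the original $B\to J$ with the matrix corner embedding $J\to J\otimes M_n$ (also a corner embedding, hence invertible). The $M_2$-space then becomes $M_2(X\otimes M_n)$ equipped with $\ad(S\otimes\sigma \oplus T\otimes\sigma)$, which has the asserted form and is very special precisely when the original $\ad(S\oplus T)$ is. For the purely special case with $n$ finite, I would use that $\delta$ extends to $M_n(\bfd^+)$ to carry out the analogous tensoring at the unitized level and then restrict back, checking that $J\otimes M_n$ remains a $G$-invariant ideal inside $X\otimes M_n$ so the resulting $M_2$-action is again special.

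With the candidate $u_+\na_{u_-}$ in hand, I would invoke Subsection \ref{subsec24} with connecting homomorphism $m:X\to X\otimes M_n$ given by $x\mapsto x\otimes e_{11}$ (or the corresponding matrix corner embedding in the special case), for which equivariance of $M:=m\otimes\id_{M_2}$ between the two $M_2$-spaces is built into the design of the amplified action. The diagrammatic requirements $\iota\, m = l\,\iota_n$, $s_+ m = e\, u_+$, $s_- m = e\, u_-$, and $h\,l = k\,H$ then collapse to bare naturality of tensoring with $\id_{M_n}$ (respectively with the chosen matrix unit), so Subsection \ref{subsec24} yields $e^{-1}\cdot s_+\na_{s_-} = u_+\na_{u_-}$ as desired.

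The main obstacle is the purely special (non-very-special) case: the tensoring must be performed using the extended action on $M_n(\bfd^+)$, and one must verify that the resulting $G$-action on $M_2(X\otimes M_n)$ still satisfies all clauses of an $M_2$-action in the sense of Subsection \ref{subsec11}, namely $G$-invariance of the ideal, the linear direct-sum structure, and the correct quotient action of the form $\id_{M_2}\otimes\alpha'$, together with the extendibility to a unitization needed for specialness. Finiteness of $n$ enters precisely here, since the passage through the unitization and the restriction of the extended action back to the non-unitized ideal are well-behaved only in that regime; this also explains why the statement does not claim specialness of the new $M_2$-space when $n$ is infinite.
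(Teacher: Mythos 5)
Your treatment of the very special case is essentially the paper's own argument and is fine: one writes the second line explicitly as $X\otimes M_n$ with splits $u_\pm=s_\pm\otimes\id_{M_n}$ and $M_2$-action $\ad(S\otimes\sigma\oplus T\otimes\sigma)$, and closes with the commuting-diagram criterion of Subsection \ref{subsec24}; the concluding remark that very specialness of the input forces $S=T$ and hence very specialness of the output also matches the paper.

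The gap is in the merely special case, which is the substantive content of the lemma. There the $G$-action on $M_n(\bfd)$ is \emph{not} of the form $\delta\otimes\sigma$, so there is no ``$\sigma$'' to tensor with, and ``the analogous tensoring at the unitized level'' is not a construction --- the external tensor product that produced $S\otimes\sigma$ has no counterpart. What is actually required, and what the paper flags as ``the problem'' before resolving it via the proof step ``Line~2'' of \cite[Proposition 3.4]{gk2} together with \cite[Proposition 9.7]{gk}, is to \emph{induce} a $G$-action on $M_n(X)\cong X^n$ along the splits themselves, using the non-equivariant identification $\bfd^+\otimes_{s_\pm^+}X^n\cong X^n$ to transport the extended action on $M_n(\bfd^+)$ through a balanced tensor product; this yields two a priori different actions (one induced through $s_+^+$, one through $s_-^+$) that must then be shown to assemble into a legitimate $M_2$-action in the sense of Subsection \ref{subsec11} (invariance of $M_n(J)$, quotient action of the form $\id_{M_2}\otimes\alpha$). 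Your proposal names this as ``the main obstacle'' but does not supply the induced-action construction, which is precisely the step the lemma exists to provide. A secondary point: your explanation of the finiteness restriction is misplaced --- $n<\infty$ is not needed to build the second line but to verify that the resulting $M_2$-action on $M_2(M_n(X))$ extends to the unitization, i.e.\ it enters only in the specialness claim for $u_+\na_{u_-}$.
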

 
 \begin{proof}
 
 Superficially we immediately get a commuting diagram 
between the first and second line  
of the diagram of Theorem \ref{thm1} 
- without the
$M_2$-action - but the problem is to find an $M_2$-action for the second line, and this is clarified in 
the proof step ``Line 2'' of 
\cite[Proposition 3.4]{gk2} employing 
\cite[Propsition 9.7]{gk}, which yields 
the claim. (The method of proof is to 
identify  
$\bfd^+  \otimes_{{s_\pm}^+} X^n \cong X^n$
non-equivariantly, where we see why we need speciality of 
all involved matrices, namely their `unitizations'.)

The case $n < \infty$ is also discussed there 
(in proof step ``Line 2'', item (f) of 
\cite[Proposition 3.4]{gk2}) 
and shows that the $M_2$-space of $u_+ \na_{u_-}$ is 
then 
special.  
 For the last assertion, when $e$ is very special, 
 we take the proposed $M_2$-action and verify 
 the claim with commuting diagrams (Subsection \ref{subsec24}).
\if 0
The case $n < \infty$ is also discussed there.  
 
 For the last assertion when $e$ is very special, 
 we use the proposed $M_2$-action and verify 
 directly the diagram with Subsection...
 \fi
 \if 0
If $e: \bfc \rightarrow (\bfc \otimes M_n, \alpha \otimes \tau )$ is very special 
and the `unitalized' $M_2$-space of $s_+ \na_{s_-}$ 
is $\big (M_2(X^+), \gamma  \big )$, then $u_+ \na_{u_-}$ has
simply the `unitalized' $M_2$-space  
$M_2 \big (M_n(X)^+ \big )$ which is a $G$-invariant subalgebra
of the $G$-algebra 
$(M_2 \otimes X^+ \otimes M_n^+,  
\gamma \otimes \tau^+)$. 
 
 -----> nicht $G$-invaraint, da einser von $M_2(X^+)$ nicht 
 zu einser gehen müssen 
 \fi 
 If the $M_2$-space of $s_+ \na_{s_-}$ is very special then
 necessarily $S=T$, yielding the last claim. 
\end{proof}



This gives an equivalent condition when an $M_2$-space is special: 

\begin{lemma}			\label{lemma32} 

Let $(M_2(X), \delta)$ be a $G$-algebra which leaves 
the upper 
left and lower right corner algebra $G$-invariant (which implies that $\delta=(\delta^{ij})_{1 \le 
i,j \le 2}$ (matrix-form)). 
\if 0 
Assuma that $Z:=  J + A$, 
which is the linear direct sum 
of a two-sided ideal $J$ 
and an subalgebra $A$ 
of $Z$,    
is a nonequivariant subalgebra of the unital algebra $X$ 
\fi
%
Assume that $Z:=  J + A$ 
is a non-equivariant subalgebra of the supposed unital algebra $X$ which is the linear direct sum 
of a two-sided ideal $J$ 
and an subalgebra $A$ 
of $Z$.   

Then $\delta$ restricts to 
a special $G$-action on 
$M_2(Z)$ 
under which $M_2(J)$ is invariant and 
such that the quotient action on $M_2(Z^+)/M_2(J) \cong M_2(Z^+/J)$ 
is of the form $\id_{M_2} 
\otimes \alpha$,  
if and only if  
\be{eq111}
\mbox{$J$ and 
	$Z$ are invariant under $\delta^{11}$}  
\en
$$\delta^{21}_g (1_X) - \delta^{11}_g (1_X) \in J, \; 
\delta^{12}_g (1_X) - \delta^{11}_g (1_X) 
\; \in J 
\qquad (\forall g \in G) . $$  
\end{lemma}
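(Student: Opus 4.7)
The plan leverages the matrix decomposition $\delta = (\delta^{ij})_{1 \le i,j \le 2}$ together with the multiplicativity of $\delta_g$ applied to the identities $E_{ij} \otimes z = (E_{ik} \otimes 1_X)(E_{kj} \otimes z)$ that are available because $M_2(X)$ is unital. Applying $\delta_g$ to such decompositions and reading off matrix entries yields the key structural formulas
\begin{align*}
\delta^{12}_g(z) &= \delta^{11}_g(z) \cdot \delta^{12}_g(1_X), \\
\delta^{21}_g(z) &= \delta^{21}_g(1_X) \cdot \delta^{11}_g(z), \\
\delta^{22}_g(z) &= \delta^{21}_g(1_X) \cdot \delta^{11}_g(z) \cdot \delta^{12}_g(1_X),
\end{align*}
together with the absorption relation $\delta^{11}_g(z) \cdot \delta^{11}_g(1_X) = \delta^{11}_g(z)$ coming from multiplicativity of $\delta^{11}_g$. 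Everything then reduces to bookkeeping with these formulas.

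For the ``$\Leftarrow$'' direction, writing $\delta^{ij}_g(1_X) = \delta^{11}_g(1_X) + j_{ij}$ with $j_{ij} \in J$ and substituting into the structural formulas, each $\delta^{ij}_g(z)$ takes the shape $\delta^{11}_g(z) + r_{ij}(z)$ where $r_{ij}(z)$ is a finite sum of products with at least one factor in $J$ and the others in $\delta^{11}_g(Z) \subseteq Z$. Since $J$ is a two-sided ideal of $Z$, these correction terms lie in $J$, so invariance of $Z$ and $J$ under $\delta^{11}$ propagates to invariance under every $\delta^{ij}$; thus $\delta$ restricts to $M_2(Z)$ with $M_2(J)$ invariant. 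Reducing mod $J$, every entry of the quotient action coincides with $\overline{\delta^{11}_g}$ on $Z/J \cong A$, so the induced quotient is of the diagonal form $\id_{M_2} \otimes \alpha$. For specialness, I would extend $\delta$ to $M_2(Z^+)$ by choosing coherent lifts $\delta^{+,ij}_g(1_{Z^+})$ of the boundary data (for instance, making each diagonal $\delta^{+,ii}_g$ unital and defining the off-diagonal ones via the structural formulas), and verify algebraically that the result is a $G$-action whose quotient on $M_2(Z^+/J) \cong M_2(A^+)$ is $\id_{M_2} \otimes \alpha$ with $\alpha$ the unital extension to $A^+$.

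For the ``$\Rightarrow$'' direction, taking the $(1,1)$-entry of the hypotheses $\delta_g(M_2(Z)) \subseteq M_2(Z)$ and $\delta_g(M_2(J)) \subseteq M_2(J)$ immediately gives invariance of $Z$ and $J$ under $\delta^{11}$. The boundary conditions on $1_X$ are extracted from the diagonal-quotient hypothesis: applied to the matrix-unit element $E_{ij} \otimes 1_{Z^+}$ inside the extension $\delta^+$ on $M_2(Z^+)$, it forces $\delta^{+,ij}_g(1_{Z^+}) \equiv \delta^{+,11}_g(1_{Z^+}) \pmod{J}$. The structural formulas above, which $\delta^+$ must also satisfy (they only use multiplicativity inside a unital enveloping algebra), then transport this congruence back to $\delta^{ij}_g(1_X) - \delta^{11}_g(1_X) \in J$ inside $X$. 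The main obstacle is precisely this last transfer step: one must argue that the abstract identity $1_{Z^+}$ and the ambient identity $1_X$ play interchangeable roles modulo $J$ for the purposes of the structural formulas, so that any congruence valid in $Z^+$ gives a corresponding congruence in $X$. This hinges on $\delta^+$ restricting to $\delta$ on $M_2(Z)$ and on the fact that multiplication by either identity acts as the identity on the relevant elements of $Z$, making the boundary terms $\delta^{ij}_g(1_X)$ and $\delta^{+,ij}_g(1_{Z^+})$ differ only by an element of $J$.
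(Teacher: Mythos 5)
Your structural formulas are exactly the ones the paper uses (its identities (\ref{eq112}), (\ref{eq113}) together with $\delta^{22}_g(a)=\delta^{21}_g(a)\delta^{12}_g(1)$), and your ``$\Leftarrow$'' bookkeeping — substituting $\delta^{ij}_g(1_X)=\delta^{11}_g(1_X)+j_{ij}$ and absorbing the correction terms into the ideal $J$ — is the paper's step (c). The genuine problem is your handling of the unitization, in both directions. For ``$\Leftarrow$'' you propose to build an extension $\delta^+$ on $M_2(Z^+)$ by ``choosing coherent lifts'' and ``making each diagonal $\delta^{+,ii}_g$ unital''; this construction is never carried out, and it is not the right one: the intended extension is not unital on $1_{Z^+}$ in general (its value there is $\delta^{ii}_g(1_X)$, which need not be an identity). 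For ``$\Rightarrow$'' you work with an arbitrary abstract witness $\delta^+$ of specialness and then must transfer the congruence $\delta^{+,ij}_g(1_{Z^+})\equiv\delta^{+,11}_g(1_{Z^+})\pmod J$ into the congruence $\delta^{ij}_g(1_X)\equiv\delta^{11}_g(1_X)\pmod J$ inside $X$; you correctly identify this as the main obstacle but do not close it, and as stated it cannot be closed, because $\delta^{+,ij}_g(1_{Z^+})$ lives in $Z^+$ while $\delta^{ij}_g(1_X)$ lives in $X$ and an arbitrary extension carries no a priori relation between the two.

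The paper dissolves both difficulties with one preliminary normalization (its step (a)): arrange $1_X\notin Z$, replacing $X$ by $X^+$ and $\delta$ by $\delta\oplus\chi$ on $M_2(X^+)\cong M_2(X)\oplus M_2(\bfc)$ if necessary (this leaves condition (\ref{eq111}) unchanged). Then $Z^+$ is realized \emph{canonically} as the subalgebra $Z+\C 1_X$ of the ambient unital algebra, and the extension witnessing specialness is nothing but the restriction of the ambient $\delta$ to $M_2(Z+\C 1_X)$ — no lifts are chosen and no transfer between two different identities is needed. With that identification, ``$\Leftarrow$'' is your substitution argument applied to $a\in Z+\C 1_X$, and ``$\Rightarrow$'' is immediate: the diagonal-quotient hypothesis gives $\delta^{ij}_g(1_X)+J=\delta^{11}_g(1_X)+J$ directly, and putting $a:=1_X$ in the structural formulas returns condition (\ref{eq111}). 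You should replace your lifting/transfer discussion by this normalization; the rest of your computation then goes through essentially verbatim.
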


\begin{proof}

{\bf (a)} 
The form of $\delta$ is by \cite[Lemma 6.3]{gk}. 
We may assume that $1_X \notin Z$, because if not, 
we may extend $X$ to $X^+$ and 
extend $\delta$ obviously
to $\delta \oplus \chi$ acting on $M_2(X^+) \cong M_2(X) \oplus M_2(\bfc)$ 
(as $X$ is unital) and
use for it the notation $\delta$ again; 
note that also condition \re{eq111} would be unaffected under such a change.

{\bf (b)}  
Since the upper left corner action $\delta^{11}$ is a $G$-algebra action, we have $\delta^{11}_g (1_X) \delta^{11}_g(a) = \delta^{11}_g(a)$. 
%
%
Consequently, 
\be{eq112} 
\delta^{21}_g (a) =  \delta^{21}_g (1 )  \cdot \delta^{11}_g(a)  
= \big (\delta^{21}_g (1 ) -  \delta^{11}_g (1 ) \big ) \cdot \delta^{11}_g(a) 
		+ \delta^{11}_g(a)    
		\en 
\be{eq113} 
\delta^{12}_g (a) =  \delta^{11}_g (a ) \cdot  \delta^{12}_g(1)  
= \delta^{11}_g(a)  \cdot \big ( \delta^{12}_g (1 ) - \delta^{11}_g (1 ) \big )  
		+ \delta^{11}_g (a )    .  
\en

\if 0
$$= (\delta^{21}_g (1 ) -  \delta^{11}_g (1 )) \delta^{11}_g(a) 
		+ \delta^{11}_g(a)   $$
		$\equiv \delta^{11}_g(a)$ if and only if 
		difference in $J$ in case $a:=1$

$$\delta^{22}_g (a) =  \delta^{21}_g (a )  \delta^{12}_g(1)  
= \delta^{21}_g (a )  (\delta^{11}_g(1)  +j) $$
\fi
{\bf (c)} 
If condition \re{eq111} holds true, then 
 identities \re{eq112}  and \re{eq113} 
together with $\delta^{22}_g (a) =  \delta^{21}_g (a )  \delta^{12}_g(1)$ show that 
$Z + \C 1_X = Z^+$ is invariant under all $\delta^{ij}$ .
They also show that 
$\alpha(a + J) := \delta^{ij} (a) + J = \delta^{11}(a) + J $ 
for all $a \in Z + \C 1_X$ and $i,j$, and thus the 
claimed format $\id_{M_2} \otimes  \alpha$.

{\bf (d)} 
If on the other hand the other condition holds, 
then because of the supposed format of the quotient action 
on $M_2 (Z^+/J)$ one
has $^\alpha (1_x +J)=\delta^{ij} (1_X) + J = \delta^{11}(1_X) + J = 1_X + J$ for all $i,j$.
 Putting $a:=1_X$ in the above identities \re{eq112}  and \re{eq113},
  we see that we must 
 get   condition \re{eq111}.  
 %
\end{proof}

The following lemma 
used later in the proof of the main theorem holds also in $C^*$-theory. 


\begin{lemma}		\label{lemma22}

Let $A$ be 
a $G$-algebra 
which has an increasing 
sequence $(p_n)$ of 
projections (i.e. idempotents in the algebraic case) which also forms
an approximate unit of $A$.  

{\rm (i)}  
If 
a $G$-equivariant algebra  homomorphism $\varphi  : A \rightarrow \call_X(X)$  
satisfies $\lim_n \varphi(p_n) (x) = x$ for all $x \in X$,     
then
it can be extended to 
the  $G$-equivariant algebra homomorphism 
$$ \overline \varphi : \call_{A}( A) 
\rightarrow \call_{X}(  X) : \overline \varphi(U) 
(x) := \lim_{n \rightarrow \infty} \varphi (U \cdot p_n) (x) 
\qquad (\forall U \in \call_A(A), x \in X)   . 
$$   


{\rm  (ii)}  
If another  homomorphism $s: A \rightarrow \call_X(X)$ satisfies 
$$s(a \cdot b ) = s(a) \cdot \varphi(b) = \varphi(a) \cdot s(b)$$ 
for all
$a,b \in A$  (i.e. $s$ is a bimodule map), then 
also $s(a \cdot V ) =  s(a) \cdot \overline \varphi(V)$
and $s( V \cdot a ) 
= \overline \varphi(V)  \cdot   s(a)$ 
for all $a \in A,V \in \call_A(A)$.  

{\rm (iii)}  
If $s$ satisfies also the assumption of (i) then also
 $\overline s(U \cdot V ) = \overline s(U) \cdot \overline \varphi(V) 
= \overline \varphi(U)  \cdot  \overline s(V)$ 
for all $U,V \in \call_A(A)$.  


\if 0
If a homomorphism $s: A \rightarrow \call_X(X)$ 
is a bimodule map in the sense that 
$s(a b ) = s(a) \varphi(b) = \varphi(a) s(b)$ for all
$a,b \in A$  (i.e. $s$ is a bimoudle map), then 
also $\overline s$ is a bimodule map, ie
$\overline s(U V ) = \overline s(a) \overline \varphi(b) 
= \overline \varphi(a)  \overline s(b)$
\fi 


\end{lemma}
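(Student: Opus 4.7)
The entire proof hinges on a single \emph{stabilization observation} that replaces the limit in the definition of $\overline{\varphi}(U)$ by a finite-stage computation. Given $U \in \call_A(A)$ and $x \in X$, choose $N = N(x)$ large enough that $\varphi(p_N)(x) = x$ (this is what the limit hypothesis reduces to in the algebraic setting, where the $p_n$ form an algebraic approximate unit). For every $n \ge N$, using multiplicativity of $\varphi$, the right $A$-linearity of $U$ (so $U(p_n) \cdot p_N = U(p_n p_N)$), and $p_n p_N = p_N$, one obtains
\[
\varphi(U \cdot p_n)(x) \;=\; \varphi(U(p_n)) \bigl(\varphi(p_N)(x)\bigr) \;=\; \varphi\bigl(U(p_n) \, p_N\bigr)(x) \;=\; \varphi\bigl(U(p_N)\bigr)(x) .
\]
Thus the limit exists trivially and equals $\varphi(U(p_N))(x)$ for any sufficiently large $N$, and in particular the value is independent of the choice of such $N$.

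With this in hand, part (i) reduces to pointwise verifications: $\overline{\varphi}(U)$ inherits linearity, right $X$-module linearity, and membership in $\call_X(X)$ from the corresponding properties of $\varphi(U(p_N)) \in \varphi(A) \subseteq \call_X(X)$, while $\C$-linearity of $\overline{\varphi}$ in $U$ follows from the analogous property of $U \mapsto U(p_N)$. For multiplicativity, given $U,V \in \call_A(A)$ and $x \in X$, choose $N$ large so that stabilization occurs at both $x$ and $\overline{\varphi}(V)(x)$; then $(UV)(p_N) = U(V(p_N))$ and multiplicativity of $\varphi$ on $A$ yields $\overline{\varphi}(UV)(x) = \overline{\varphi}(U)(\overline{\varphi}(V)(x))$. $G$-equivariance follows from that of $\varphi$ together with the freedom to enlarge $N$ to absorb the $g$-translates of $x$ that appear in the adjoint action formula.

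For part (ii) the two bimodule identities are checked separately at $x \in X$. For $s(a \cdot V) = s(a) \cdot \overline{\varphi}(V)$, pick $N$ with $\varphi(p_N)(x) = x$ and compute
\[
s(a \cdot V)(x) \;=\; s\bigl((a \cdot V) \cdot p_N\bigr)(x) \;=\; s\bigl(a \cdot V(p_N)\bigr)(x) \;=\; s(a)\, \varphi(V(p_N))(x) \;=\; \bigl(s(a) \cdot \overline{\varphi}(V)\bigr)(x),
\]
where the first step uses $s(cd) = s(c)\varphi(d)$ with $c = a \cdot V \in A$ (valid since $A$ is a two-sided ideal in $\call_A(A)$), the second is associativity in $\call_A(A)$ with $V \cdot p_N = V(p_N)$, the third is the same hypothesis with $c = a$, and the last is the stabilization formula. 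For $s(V \cdot a) = \overline{\varphi}(V) \cdot s(a)$, set $y := s(a)(x)$ and pick $M$ large enough that simultaneously $\varphi(p_M)(y) = y$ and $p_M \, a = a$ (both obtainable from the algebraic approximate unit), then run the symmetric calculation using $s(cd) = \varphi(c) s(d)$.

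Part (iii) is then a direct chaining: given $U, V \in \call_A(A)$ and $x \in X$, choose $N$ large enough that the stabilizations of $\overline{s}(U)$, $\overline{s}(V)$, $\overline{\varphi}(U)$, $\overline{\varphi}(V)$ at $x$ and at $\overline{s}(V)(x)$, $\overline{\varphi}(V)(x)$ all occur at level $p_N$. Then $\overline{s}(UV)(x) = s\bigl(U(V(p_N))\bigr)(x) = s\bigl(U \cdot V(p_N)\bigr)(x)$, and applying part (ii) to the element $V(p_N) \in A$ and the operator $U$ yields both $s\bigl(U \cdot V(p_N)\bigr)(x) = \bigl(\overline{\varphi}(U) \cdot s(V(p_N))\bigr)(x) = \bigl(\overline{\varphi}(U) \cdot \overline{s}(V)\bigr)(x)$ and, symmetrically, $\bigl(\overline{s}(U) \cdot \overline{\varphi}(V)\bigr)(x)$. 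The only genuine obstacle is bookkeeping: ensuring that all stabilization thresholds are chosen consistently whenever several limits interact; the actual algebraic content is entirely pushed back to the hypotheses on elements of $A$.
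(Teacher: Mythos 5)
Your proof is essentially correct as a proof of the \emph{algebraic} version of the lemma, but it takes a genuinely different route from the paper's, and the difference matters. The paper explicitly proves the lemma in $C^*$-theory: there the hypothesis $\lim_n \varphi(p_n)(x)=x$ is a genuine norm limit, and the proof establishes that $\big(\varphi(U\cdot p_n)(x)\big)_n$ is Cauchy via the estimate $\|\varphi(U\cdot p_n)x-\varphi(U\cdot p_N)x\|\le 2\varepsilon\|U\|$ (using $p_N\ge p_n$ and $\|U\|$), then verifies multiplicativity, the bimodule identities, adjointability (via inner products), and $G$-equivariance by interchanging limits. Your stabilization observation --- that in a discrete algebra the limit hypothesis forces $\varphi(p_N)(x)=x$ for some finite $N$, whence $\varphi(U\cdot p_n)(x)=\varphi(U(p_N))(x)$ for all $n\ge N$ --- is a clean and more elementary replacement for the Cauchy argument, and your computations in (ii) and (iii) are then structurally identical to the paper's (same use of $s(b\cdot p_n)=s(b)\varphi(p_n)$, same chaining through $V(p_N)\in A$). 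What your approach buys is simplicity and the elimination of all analysis; what it loses is the $C^*$-algebraic case, which the paper's main theorem explicitly claims and for which this lemma is invoked: there is no $N$ with $\varphi(p_N)(x)=x$ exactly, so your argument does not degenerate gracefully into the paper's. One further small point: your assertion that membership of $\overline\varphi(U)$ in $\call_X(X)$ is ``inherited pointwise'' from $\varphi(U(p_N))$ is too quick, since adjointability ($\phi\circ T\in\Theta_X(X)$ for all $\phi\in\Theta_X(X)$) is a global, not pointwise, condition and $N$ depends on $x$; the paper is admittedly also brief here (it only checks the $C^*$-adjoint), but in the algebraic functional-module setting this step deserves an explicit stabilization argument of its own.
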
 

\begin{proof}

{\bf (a)} 
(i) 
We prove it for $C^*$-theory, where
$\call_A(A)$ denotes the well-known algebra of adjointable 
operators on the Hilbert modules $A$ over itself. 
Given $U \in \call_A(A)$ and $x \in X$,
choose $n_0 \ge 1$ such that for all $n \ge n_0$ one has 
$\|\varphi(p_n) x - x\| \le \varepsilon$ (by assumption). 
Then for all $N \ge n \ge n_0$ one has $p_N \ge p_n$ and thus 

$$\| \varphi(U \cdot p_n) x   -\varphi(U \cdot p_N) x \|
=  \| \varphi(U \cdot p_n) x  -\varphi(U \cdot p_N)
\cdot \varphi(p_N)  x  \|$$
$$\le \| \varphi(U \cdot p_n) (x)  -\varphi(U \cdot p_N)
\cdot  \varphi(p_n)  x \| + 2 \varepsilon \|U\| 
= 2 \varepsilon \|U\|, $$
\if 0
$$\le \| (\varphi(U(p_n)) - \varphi(U(p_m)))  (x) \| + 2 \varepsilon \|U\|
$$
\fi 
verifying convergence of the formula of $\ol \varphi$. 
Then $\ol \varphi$ is a homomorphism: 
$$\lim_n \varphi (U \cdot  V \cdot p_n)x  
= \lim_{n} \lim_m \varphi \big (U \cdot p_m \cdot (V \cdot p_n) \big ) x 
= \lim_{n,m} \varphi (U \cdot p_m) \cdot \varphi( V \cdot p_n ) x . $$
\if 0
$$\lim_n \varphi (U \circ V(p_n))(x) 
= \lim_{n,m} \varphi (U (p_m V(p_n) )(x) 
= \lim_{n,m} \varphi (U (p_m)) \circ \varphi( V(p_n) )(x) $$
\fi 


We see 
that 
$\varphi(U)$ is an adjointable operator in the sense of $C^*$-theory: 
$$\langle \ol \varphi(U) x ,  y \rangle  = 
\langle \lim_{n \rightarrow \infty} \varphi (U \cdot p_n) x, 
y \rangle =  
 \lim_{n,m \rightarrow \infty}  \langle x, 
\varphi ( p_n^* \cdot U^* \cdot p_m) \rangle 
= \langle x , \ol \varphi( U^*) y \rangle  . 
$$

{\bf (b)} 
(ii) \& (iii) 
Given $a \in A, U,V \in \call_A(A)$ and $x \in X$, we verify 
(ii) and (iii) with
%
%
$$s ( a \cdot V)   x  
= \lim_n s ( (a \cdot V) \cdot p_n)  x  
= \lim_n s ( a ) \cdot \varphi(  V \cdot p_n) x 
= s ( a ) \cdot \overline \varphi( V) x  ,$$
$$\overline s ( U \cdot V)   x  = \lim_n  s ( U \cdot V \cdot p_n)  x  
= 
\lim_n \lim_m 
s ( U \cdot p_m \cdot (V \cdot p_n) ) x  
=   \overline s ( U ) \cdot \overline  \varphi( V) x  . $$

\if 0
$$\overline s ( U \cdot V)   x  \approx  s ( U (V(p_n)))  x  
\approx  
s ( U (p_m \cdot V(p_n) )) x  $$
$$=   s ( U (p_m)  \cdot V(p_n) ) x  
= s ( U (p_m)) \cdot \varphi( V(p_n)) \; x  
\approx \overline s ( U ) (\overline  \varphi( V) x) $$
\fi 

Write $g(U):= \alpha_g \circ U \circ \alpha_g^{-1}$ 
for the $G$-action on $\call_A(A)$ ($g \in G, U \in \call_A(A)$),
also analogously on $\call_X(X)$. 
 
Then, by $G$-equivariance of $\varphi$, 
we obtain $G$-equivariance of $\ol \varphi$:  
$$\overline \varphi \big ( g(U) \big ) x  = \lim_{n \rightarrow \infty} 
\varphi \big ( g(U) \cdot p_n  \big ) x
= \lim_{n \rightarrow \infty} 
g \big (\varphi ( U \cdot g^{-1}(p_n) ) \big ) x
$$
$$= \lim_{n \rightarrow \infty} 
g \big (\ol \varphi ( U) \big )  \circ  g \big ( \varphi(g^{-1}(p_n) ) \big ) x 
= \lim_{n \rightarrow \infty} 
g \big ( \ol \varphi ( U) \big )  \circ \varphi(p_n)  x 
= g  \big (\ol \varphi ( U)  \big )    x   . 
$$

\if 0
$$\overline \varphi(\alpha_g \circ  U \alpha_g^{-1}) = \lim_{n \rightarrow \infty} \varphi (\alpha_g \circ U \circ \alpha_g^{-1} \circ  p_n \circ \alpha_g \circ \alpha_g^{-1}) x
$$ 
$$
\lim_{n \rightarrow \infty} \gamma_g \circ \varphi ( U \circ \alpha_g^{-1} \circ  p_n \circ \alpha_g)  \circ \gamma_g^{-1} \circ x
$$   
$$
\lim_{n \rightarrow \infty} \gamma_g \circ \ol \varphi ( U ) \circ \ol \varphi (\alpha_g^{-1} \circ  p_n \circ \alpha_g)  \circ \gamma_g^{-1} \circ x
$$   
$$
\lim_{n \rightarrow \infty} \gamma_g \circ \ol \varphi ( U ) \circ   \gamma_g^{-1} \circ \varphi (   p_n )  \gamma_g  \circ \gamma_g^{-1} \circ x
$$   
   \fi 
   
\end{proof}

   \section{Main results}			\label{sec4} 

 In this section we compute the $K$-homology $K$-theory 
 product in $GK^G$-theory and $KK^G$-theory in
 Corollary \ref{cor31} below. 
  
 \if 0
 In this section it will be proven that the product $z \cdot w \in GK^G$ 
 of two level-one elements $z \in L_1 GK^G(A,\bfc)$ 
 and $w\in L_1 GK^G(\bfc,B)$, which are often referred to as $K$-homology 
 elements $z$ and $K$-theory elements $w$ in $KK^G$-theory, respectively,
 is a level-one element again, see 
 Corollary \ref{cor31}.  
  \fi

 The following theorem roughly 
 says that in
 very special $GK^G$-theory the product 
 $\xymatrix{  B \ar[r]^{x} & \C \ar[r]^y & A}$ of level-one 
 elements $x,y \in L_1$ 
 is a level-one element 
 again.

\begin{theorem}			\label{thm1}

Let $s_+ \na_{s_-} \in L_1 GK^G( 
\bfd, A)$
and  $t_+ \na_{t_-} \in L_1 GK^G( B, \bfd)$  
be any 
given level-one elements as indicated in the first line
and last row, respectively,  of the following diagram. 

$$\xymatrix{ 
A  \ar[r]^h   \ar[d]^k   &J   
\ar[d]^l      
 	\ar[r]_{\iota}   & X      \ar[d]_m  \ar@<.5ex>[rrr]^f    
 	\ar@<-.5ex>[l]_{ \Delta_{s_-}}   
& & &   
 \bfd \ar[lll]^{s_\pm}    \ar[d]_e   \\
A  \ar[r]^{  H }  
   &    M_n (J) 
\ar[r]^{ \iota_n:= 
\iota \otimes \id_{M_n}}          &    
M_n (X)   
\ar@<.5ex>[rrr]^{ 
f  \otimes \id_{M_n } }   
& & &    M_n (  \bfd)   
\ar[lll]^{
u_\pm := s_\pm \otimes \id_{M_n  }}     
\ar@<-.5ex>[u]_{ e^{-1}}   
\\  
A  \ar[r]^H   \ar[d]  \ar[u]  &    \bfj  := M_n(J)  
\ar[r]^-{
  \jmath :=\iota_n \oplus 0} 
\ar[d]   
\ar[u]   &    
  \bfx  := M_n(X) 
   \square_{S_-} \bfk   \ar[u]^{  \id  
   \oplus 0}
\ar@<.5ex>[rrr]^{F := 0 \oplus \id_{\bfk}}   
\ar[d]_{\id_{\bfx}  \oplus 0_B }    & & &    \bfk     \ar[u]^\phi 
\ar[lll]^{S_\pm:= u_\pm  
 \circ \phi \oplus \id_{\bfk}  }     \ar[d]_{\id_{\bfk}  
\oplus 0_B }      
     \\  
A  \ar[r] 
\ar[d]  &   
\bfj  \ar[r]^{ \jmath 
	\oplus 0_B} 
\ar[d]  &    
\bfx
\stackrel{T_-}{\oplus}       B    
\ar@<.5ex>[rrr]^{   
F \oplus \id_B }      \ar[d]^{E_1}  
  & &  &  
  \bfk 
\stackrel{t_-}{\oplus}   
  B   
  \ar[lll]^{   
S_\pm \oplus \id_B}     \ar[d]^{e_1}  
\ar@<-.5ex>[u]_{ \Delta_{t_-}} \\
A  \ar[r] 
&   
M_2(\bfj)   \ar[r]^{( \jmath 
	\oplus 0_B) \otimes \id_{M_2} } 
&    
M_2 \big ( \bfx
\stackrel{T_-}{\oplus}       B    
\ar@<.5ex>[rrr]^{   
(F \oplus \id_B)   \otimes \id_{M_2} } \big )      
  & &  &  
  M_2 \big ( \bfk 
\stackrel{t_-}{\oplus}   
  B    \big ) 
  \ar[lll]^{   
v_\pm:=  (S_\pm \oplus \id_B) \otimes \id_{M_2}}     \\  
A  \ar[r]   \ar[u] 
\ar[d]    &   
\bfj  \ar[r]^{\jmath  
	\oplus 0} 
\ar[d]   \ar[u]   &    
\bfx
\stackrel{T_-}{\oplus}       B    
\ar@<.5ex>[rrr]^{   
F \oplus \id_B }      \ar[d]^{\zeta_2}   \ar[u]^{E_2}     
  & &  &  
  \bfk 
\stackrel{t_-}{\oplus}   
  B   
  \ar[lll]^{   
S_\pm \oplus \id_B}     \ar[d]^{\zeta_1}    
\ar[u]^{e_2}   \\
A  \ar[r]    &   
\bfj   \ar[r]^{\jmath   
	\oplus 0}   &    
\call \bfx 
 \square_{T_-}        B    
\ar@<.5ex>[rrr]^{   
   F(x - T_-(b))  + t_-(b)   \oplus b  }   
  & &    &
   \call \bfk 
 \square_{t_-}   
  B   
  \ar[lll]^{   
S_\pm( k 
- t_-(b)) + T_-(b) \oplus b}  
\ar@<.5ex>[d]^{0 \oplus \id_B}  
 \\
A  \ar[r]  \ar[u]    &   
 \bfj  \ar[r]   \ar[u]    &   
  \call  \bfx   
\square_{x_-} 
  B  
\ar@<.5ex>[rrr]^{0 \oplus \id_B}   
\ar[u]^{ \id }   
  & &    &
   B   
	\ar[u]^{ t_\pm  \oplus \id_B}   
	 \ar[lll]^{  x_\pm  = 
 \big ( S_\pm  \circ (t_+ - t_-)  + T_-  \big )  \oplus \id_B}  
}$$ 

\if 0
Suppose the corner embedding $e$ 
in the last column of the above diagram to be very special.
\fi  
Let the $M_2$-spaces of both $s_+ \na_{s_-}$ (the first line) and 
$t_+ \na_{t_-}$ (the last column) be special, and the corner embedding $e$ 
in the last column of the above diagram be very special. 
  If 
  the $M_2$-space of $s_+ \na_{s_-}$  
  is very special, the matrix size $n$ can be infinite, 
  otherwise it must be finite.   %
  %
\if 0
Further let $e_1$  
be either very special, or special, in 
which case the $M_2$-space of $s_+ \na_{s_-}$ has to be 
either very special, or only special but then also the matrix size $n$ to be finite. 
\fi

In $t_+ \na_{t_-}$, 
the morphism $\phi \cdot e^{-1}$ in the last column is regarded 
as the inverse of a corner embedding, where $\phi$ may be any 
algebra homomorphism. 
The maps $\zeta_1$ and $\zeta_2$ 
are just the algebra isomorphisms of Subsection \ref{subsec28}. 


If
assuming that - given the $M_2$-space of the first line of the above diagram  is denoted by $ \big (M_2(X), \ad(\gamma_- \oplus \gamma_+) \big)$, 
see \re{eq77} - 
there exists an algebra homomorphism 
$$\varphi: (\bfd, \delta 
) \rightarrow \big (\call_X(X), \ad(\gamma_\pm) \big )$$ 
which is 
equivariant with respect to both $\ad(\gamma_-)$ and 
$\ad(\gamma_+)$
- \big (this is the case if it is equivariant with respect to one of them
and $\varphi$ maps into the center of $\call_X(X)$ \big ) -
such that
$$s_\pm(c \cdot d) = s_\pm(c) \cdot \varphi(d) = \varphi(c) 
\cdot s_\pm(d)$$ 

for all $c,d \in \bfd$ 
(i.e. $s_\pm$ are 
$\bfd$-bimodule maps),  
and $\bfk$ has an approximate unit which is an increasing 
net $(p_m)$ 
of projections, 
then the product of the given level-one elements 
is a level-one element $x_+ \na_{x_-} \in L_1 GK^G(B,A)$ in $GK^G$ as indicated in the last line of the diagram, that is, 
$$t_+ \na_{t_-} \cdot s_+ \na_{s_-} = x_+ \na_{x_-}$$
or written out 
$$
x_\pm (b) = 
 \Big (   (s_\pm \otimes \id_{M_n  }  )  
 \circ \phi \oplus \id_{\bfk} \Big )    \circ (t_+ - t_-) (b)  \oplus b
 $$
 $$  {+ 
   	{
   \lim_{m \rightarrow \infty} \Big ( 	\tau \circ (\varphi \otimes \id_{M_n})  \circ \phi \oplus  \id_{\bfk} \Big )} \big( t_-(b) \cdot p_m \big )  
    \oplus b }  . $$
   
   	\big (Here, $t_\pm: B \rightarrow \call_\bfk(\bfk)$, 
 	the ideal of $t_+ \na_{t_-}$ is $\bfk$ notated in the third line of the above diagram, such that $t_+ -t_-$ 
 	maps to $\bfk$, and $\tau$ is from \re{eq67}.\big ) 

More precisely, here we set
\begin{equation}    \label{eqt}
t_+ \na_{t_-}:= (t_+ \oplus \id_B) \cdot  
\zeta_1^{-1} \cdot  e_2 \cdot e_1^{-1}  
 \cdot \Delta_{ \zeta_1^{-1} \circ (t_- \oplus \id_B)}  \cdot \phi \cdot e^{-1}    .  
 \end{equation} 

This theorem does not need the homotopy axiom of $GK^G$-theory. 

Also, this theorem 
holds 
analogously exactly in that form in $KK^G$-theory for $C^*$-theory 
by interpreting $KK^G \cong GK^G$ \cite{aspects}  
and replacing $M_n$ by $\calk$ (compact operators on Hilbert space) 
if $n= \infty$, 
and only with special and very special
$G$-actions as indicated. 


\end{theorem}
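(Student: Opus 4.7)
The plan is to prove the identity $t_+ \na_{t_-} \cdot s_+ \na_{s_-} = x_+ \na_{x_-}$ by a telescoping argument through the rows of the displayed diagram: between each pair of consecutive rows I verify a commuting-diagram identity of two $L_1$-morphisms in the sense of Subsection \ref{subsec24}, so each step preserves the represented element of $GK^G$, and the final row is the claimed explicit formula for $x_+ \na_{x_-}$.

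First I would absorb the inverse corner embedding $\phi \cdot e^{-1}$ appearing on the right edge of $t_+ \na_{t_-}$ into $s_+ \na_{s_-}$ by applying Lemma \ref{lemma31}. This gives the passage from the first to the second row: the product is unchanged after replacing the first $L_1$-element by its $M_n$-tensored version, with new $M_2$-space $\big(M_2(X \otimes M_n), \ad(\gamma_- \otimes \sigma \oplus \gamma_+ \otimes \sigma)\big)$. Very-specialness of $e$ is precisely the hypothesis that makes this step work, and it is what allows $n = \infty$ when the $M_2$-space of $s_+ \na_{s_-}$ is itself very special; otherwise the hypothesis forces finite $n$, consistent with Lemma \ref{lemma31}.

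Rows three through five then successively replace the intermediate algebra by the middle-space construction of Subsection \ref{subsec28}, forming $\bfx = M_n(X) \square_{s_-} \bfk$, then $\bfx \stackrel{T_-}{\oplus} B$, and finally its $M_2$-double which will carry the $M_2$-action of the composed $L_1$-element. At each step the vertical maps are visibly equivariant algebra homomorphisms making the requisite squares commute on the ideal and on the splits, so the hypotheses of Subsection \ref{subsec24} apply and adjacent rows represent the same morphism. The algebra structure of $J \stackrel{s}{\oplus} A$ from \re{zeta} produces the ideal, subalgebra, and linear direct sum decomposition required by items (a)--(d) of Subsection \ref{subsec11}, while Lemma \ref{lemma32} is the natural tool for certifying that the combined $M_2$-action is special and has the quotient form $\id_{M_2} \otimes \alpha$ required by item (g).

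The last three rows then rewrite the ideal-and-split decomposition into the explicit pushforward formula for $x_\pm$. Here the algebra homomorphism $\varphi:(\bfd,\delta) \to \big(\call_X(X), \ad(\gamma_\pm)\big)$, together with Lemma \ref{lemma22}, is needed to make sense of the required products $s_\pm$ applied to $t_-(b)$, since $t_-(b)$ lives in $\bfk$ rather than $\bfd$; the $\bfd$-bimodule property of $s_\pm$ combined with the approximate unit $(p_m) \subseteq \bfk$ and the extension $\ol \varphi$ to $\call_\bfd(\bfd)$ provided by Lemma \ref{lemma22} produce the limit formula $\lim_m \tau \circ (\varphi \otimes \id_{M_n}) \circ \phi\big(t_-(b) \cdot p_m\big)$ that appears in $x_\pm$. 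I expect the main obstacle to be the bookkeeping of $M_2$-actions across the rows --- ensuring that speciality is preserved when $n<\infty$ and very-specialness when $n = \infty$, and that the quotient action on every intermediate $M_2$-space is forced into the diagonal form $\id_{M_2}\otimes\alpha$ --- which is exactly where Lemma \ref{lemma32}, combined with the bimodule property of $s_\pm$ through $\varphi$, does the heavy lifting.
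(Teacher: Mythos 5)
Your architecture --- telescoping down the rows via the commuting-diagram principle of Subsection \ref{subsec24}, with Lemma \ref{lemma31} handling the inverse corner embeddings, the middle-space construction of Subsection \ref{subsec28} building the intermediate algebras, Lemma \ref{lemma32} certifying speciality of the $M_2$-spaces, and Lemma \ref{lemma22} supplying the extension behind the limit formula --- is exactly the paper's. One misattribution: the extension from Lemma \ref{lemma22} is not of $\varphi$ to $\call_\bfd(\bfd)$ but of $\vartheta = \varphi_\infty\circ\phi\oplus\id_\bfk$ to $\ol\vartheta:\call_\bfk(\bfk)\rightarrow\call_\bfx(\bfx)$, with the approximate unit $(p_m)$ taken in $\bfk$; it is needed already to define $T_-=\ol\vartheta\circ t_-$ and hence the algebra $\bfx\stackrel{T_-}{\oplus}B$ of the fourth row, not only ``in the last three rows''.

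The genuine gap is the passage from row three to row four, which you subsume under ``the vertical maps are visibly equivariant algebra homomorphisms\ldots\ so the hypotheses of Subsection \ref{subsec24} apply.'' That mechanism only composes an $L_1$-element with algebra homomorphisms; it cannot absorb the synthetic split $\Delta_{t_-}$ sitting inside $t_+\na_{t_-}$, and that absorption is precisely what the paper identifies as the novelty of the whole proof. What is actually required is, first, the commuting-diagram identity $(\id_\bfk\oplus 0)\cdot(S_+\oplus\id_B)\na_{S_-\oplus\id_B}=S_+\na_{S_-}$ relating rows three and four, and then the splitexactness relation (r) of Subsection \ref{subsec22} applied at the middle algebra $\bfk\stackrel{t_-}{\oplus}B$, which gives $\Delta_{t_-}\cdot(\id_\bfk\oplus 0)=\id-\zeta_1(0\oplus\id_B)(t_-\oplus\id_B)\zeta_1^{-1}$; the correction term then dies because $(0\oplus\id_B)\cdot(S_+\oplus\id_B)=(0\oplus\id_B)\cdot(S_-\oplus\id_B)$ and $s\na_s=0$ holds in general, yielding $\Delta_{t_-}\cdot S_+\na_{S_-}=(S_+\oplus\id_B)\na_{S_-\oplus\id_B}$. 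Without this computation the generator $\Delta_{t_-}$ is never eliminated from the word \re{eqt} and the telescoping cannot reach the last row, so you should supply it explicitly.
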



\begin{proof}

{\bf Strategy.} 
We are given the first line (= $s_+ \na_{s_-}$) and the last column 
(= $t_+ \na_{t_-}$)
of the above diagram, and our strategy of proof 
is to fuse step by step at first the first line with the right most 
vertical arrow glued to it (= $e^{-1}$) to obtain the second line by verifying 
a commuting diagram between lines one and two,  
and so on, until we end up at the last line
(= $x_+ \na_{x_-}$). 
All steps are known techniques to fuse algebra homomorphisms 
or inverse corner embeddings with $L_1$-elements,
the exception being the step from line three to four, which 
is the novelty.

{\bf Line 1: } 
At first we note that as remarked in Subsection \ref{subsec11} 
we are automatically provided with the maps 
$f$s in the above diagram and we have only entered them 
informatively.
The discussion and verification of the involved $M_2$-spaces 
is postponed to the end of the proof, and thus
 we  {\em at first  
ignore all indicated $G$-actions} until then. 
Also throughout the proof, the above diagram of the 
theorem is referred to as the ``above diagram''.  

Also regard and interpret the last column $t_+ \na_{t_-}$ of the above diagram as an $L_1$-element 
by assuming that  $\bfk:= M_n(X)$ and $\phi= \id_{M_n(X)}$ 
for a second: \re{eqt} is then $\big (\zeta_1^{-1} \circ ({t_+ \oplus \id_B}) \big )   
\na_{\zeta_1^{-1} \circ (t_- \oplus \id_B)}$ and meets 
the format \re{eq29}, where 
the $e_1,e_2$-arrows are the analogs
of \re{eq10}.  

In 
this theorem, we 
often interpret $X \subseteq \call_X(X)$ 
as the two-sided ideal of left-multiplication operators for any algebra $X$ 
without saying.  

\if 0
Also regard and interpret the last column $t_+ \na_{t_-}$ of the above diagram as an $L_1$-element 
by `erasing' the 
isomorphism $\zeta_1$  and the homomoprhism $\phi$ 
for a second: \re{eqt} is then $({t_+ \oplus \id_B})   
\na_{t_- \oplus \id_B}$ and meets 
the format \re{eq29}. 
Further, $\bfk$ is 
the ideal of this $L_1$-element, 
the $e_1,e_2$-arrows are the analogs
of \re{eq10},  
$t_\pm: B \rightarrow \call_\bfk(\bfk)$ as indicated in the lower right corner of the above diagram and $\Delta_{t_-}:= \Delta_{t_- \oplus \id_B}$ is an abbreviation. 
\fi 

Turn $X$ to a $\bfd$-bimodule via $\varphi$ 
by $x* d := x \cdot \varphi(d)$ 
and $d * x := \varphi(d) \cdot x$ for all $x \in X, d \in \bfd$.

{\bf Line 2:} 

Lemma \ref{lemma31} yields 
$e^{-1} \cdot s_+ \na_{s_-} = u_+ \na_{u_-}$
(in words: $e^{-1}$  multiplied by   line 1  is line 2).  
In the above diagram, $k:= \id_A$ and $l,m$ are the canonical
corner embeddings.

{\bf (a)} 
Set the algebra homomorphism $\varphi_\infty$  
to be the 
two-fold $G$-equivariant 
composition - where 
the `starter' algebra $ (\bfd, \delta) \otimes (M_n, \sigma)$  
of the second line of the above diagram 
has the indicated 
very special $G$-action 
by assumption -  
\be{eq67}
\xymatrix{ 
\varphi_\infty: 
\big (M_n(\bfd), \delta \otimes 
 \sigma  
 \big )   \ar[rr]^-{ \varphi  \otimes \id_{M_n}  }   &&  
 \Big ( \big ( \call_X(X) \big ) \otimes M_n,  \ad(\gamma_\pm)  \otimes      
   \sigma  
   \Big ) 
\ar[d]^-\tau   \\ 
& & 
\call_{M_n (X)} \Big ( \big (M_n (X),   \mu_\pm 
 \big ) \Big ),} 
\en
the last inclusion $\tau$ 
being obviously defined using matrix multiplication
\big ($\tau(y)(x):= y \cdot x$ 
for $y \in M_n\big (\call_X(X) \big), x \in M_n(X)$\big).

   We regard $M_n(\bfd)$ and 
   $M_n(X)$ as $M_n (\bfd)$-bimodules, 
the latter one  by left and right multiplication with the elements 
 $\varphi_\infty  (a)
 \in \call M_n(X)$  
 for all $a \in M_n(\bfd)$.

{\bf (b)}     
    Using that $s_\pm$ are $\bfd$-bimodule maps, 
   it follows that 
   $u_\pm$ as entered in the second line of the above
   diagram are 
   $M_n( \bfd)$-bimodule 
   maps, for example, 
      $$u_-( a  \cdot b ) = u_- \big ( (a_{ij})_{ij} \cdot (b_{ij})_{ij} \big ) 
      = \Big (\sum_k s_-(a_{ik} \cdot b_{kj}) \Big )_{ij} 
   =  \Big (\sum_k s_-(a_{ik}) * b_{kj}  \Big )_{ij}  $$
   $$= \Big (\sum_k s_-(a_{ik}) \cdot \varphi (b_{kj}) \Big )_{ij} 
   = u_- (a) \cdot \varphi_\infty(b)  = u_- (a) * b  . $$

   {\bf Line 3:}

   Recall $\bfx = 
   \jmath 
   (\bfj)+ S_-(\bfk)$ (see Subsection \ref{subsec28})  and other notions used 
   next from the above diagram. 

\if 0   
   {\bf (a)} 
   Note that 
$\kappa(\bfj)$ is 
an ideal in $\call_\bfx (\bfx)$, 
because it is one in $\bfx$ (because of quadratik algebras: 
$j = j_1 j_2 \in \kappa(\bfj)$ for $j_1,j_2 \in \kappa(\bfj)$, thus $j_1 j_2 \cdot \call \bfx  
\subseteq j_1 \cdot  \bfx 
\subseteq \kappa(\bfj)$).  
\fi 

Define the algebra homomorphism  
\be{eq128}
\vartheta:  (\bfk, \kappa)  \rightarrow 
 \big ( \call_\bfx ( \bfx) , \ad(\Omega_\pm) \big) :  
 \vartheta (k) := \varphi_\infty \big (\phi(k) \big ) \oplus  k  ,  
 \en 
  at first 
  glance an element of $\call \big ( M_n(X) \big )   \oplus  \bfk$.  

Turn $\bfx$ to a 
$\bfk$-bimodule via $\vartheta$ 
\big (i.e. $x * k:= x \cdot \vartheta(k)$
and $k * x:=  \vartheta(k) \cdot x$  for all $x \in \bfx,k \in \bfk$\big ).  
It follows that $S_\pm$ are 
$\bfk$-bimodule maps, for example, 
$$S_\pm ( k \cdot w) = u_\pm \big (\phi(k 
 \cdot w)  \big ) \oplus k w 
 = \varphi_\infty   \big ( \phi(k) \big )  \cdot   u_\pm \big (\phi(w) \big ) \oplus k w
 $$
$$ =   \vartheta  (k) \cdot  
S_\pm  (w) = 
k * S_\pm(w)  .   $$
\if 0
$$S_\pm ( k \cdot w) = u_\pm \big (\phi(k 
 \cdot w)  \big ) \oplus k w 
 = u_\pm \big ( \phi(k) \big )  \cdot \varphi_\infty  \big (\phi(w) \big ) \oplus k w
 $$
$$ =   S_\pm(k) \cdot  
\vartheta (w) = 
S_\pm(k) * w $$
\fi

This computation 
also shows
that  $\vartheta$ lands in $\call_\bfx (\bfx)$.  
\if 0
The ideal of the middle space $\bfx$ (see Subsection \ref{subsec28}) is set to be $\iota(\bfj)$. 
\fi 
(The 
summand $\id_B$ was added 
in the formulas of $S_\pm$ to ensure they 
become  injective.) 

Immediately verify with 
Subsection \ref{subsec24} that $\phi \cdot u_+ \na_{u_-}
= S_+ \na_{S_-}$ 
(in words: $\phi$  multiplied by   line 3  is line 2).

   {\bf Lines 4 \& 7:} 
   
   We are going to show that $\Delta_{t_-} \cdot  S_+ \na_{S_-} = (S_+ \oplus \id_B) 
\na_{S_- \oplus \id_B}$ 
(in words: $\Delta_{t_-}$  multiplied by   line 3  is line 4). 

   {\bf (c)}   
   The particular shape of the third line of the diagram 
   is now unimportant, we only assume it is a level-one morphism
   $S_+ \na_{S_+}$ provided with a two-fold equivariant ($+$ and $-$) algebra homomorphism 
   $\vartheta: (\bfk, \kappa)  \rightarrow \big (\call_\bfx(\bfx), \ad(  \Omega_\pm  
   ) \big)$ 
   with respect to which $S_\pm$ are $\bfk$-bimodule 
   maps \big (i.e. $S_\pm(a \cdot b) = S_\pm(a) \cdot \vartheta(b)
   =  \vartheta(a) \cdot S_\pm(b)$ for all $a,b \in \bfk$\big ). 
   We now extend it 
   to  
   \be{eq129}
   \overline \vartheta: \big (\call_\bfk(\bfk), \ad(\kappa) \big )   \rightarrow 
   \big ( \call_\bfx(\bfx),
    \ad(  \Omega_\pm  
    ) \big)    
    \en   
   by  Lemma  \ref{lemma22}.(i).  
   We recall and will use Lemma \ref{lemma22}.(ii) that $S_\pm$ 
   are also $\call_\bfk(\bfk)$-bimodule maps 
   with respect to $\overline \vartheta$.   
   We define the algebra homomorphism 
      \be{eq59}
      T_- : (B, \beta) \rightarrow \big ( \call_{\bfx}(\bfx), \ad(         
      \Omega_\pm  
       \big )  : 
   	T_- := \overline 
   	\vartheta \circ t_-   .  
   	\en

{\bf (d)}    
   It follows that $S_\pm \oplus \id_B$, defined by   
   $S_\pm \oplus \id_B(k \oplus b):= S_\pm(k) \oplus b$
   for all $k \in \bfk, 
   b \in B$, are algebra homomorphisms,
   by the following and further similar computations 
   using Lemma \ref{lemma22}.(ii): 
 $$(S_- \oplus \id_B) \big ((a \oplus 0) \cdot (0 \oplus  b)  \big  ) =  (  
 S_- \oplus \id_B) \big (  a \cdot t_-(b)  \oplus  0 \big ) 
 = 
 S_-(a) * t_-(b)  \oplus 0  $$ 
 $$= S_-(a) \cdot \overline 
 \vartheta \big (t_-(b) \big )  \oplus 0  = S_-(a) \cdot T_-(b)  \oplus 0  $$
 $$= \big (S_-(a) \oplus 0 \big ) \cdot (0 \oplus b)  = (S_- \oplus \id_B)
 \big (a \oplus 0 \big ) \cdot  (S_- \oplus \id_B) \big (0  \oplus b \big ) 
 . $$

{\bf (e)}		
Ignore the fifth line of the above diagram for the moment, and identify the fourth 
and the sixth line of the above diagram.  

Then line 7 
is just a copy of line 4 (= line 6), where only the two indicated algebras are transformed by  the 
algebra isomorphisms  
$\zeta_2(x \oplus b) := x + T_-(b) \oplus b$ 
and 
$\zeta_1(k \oplus b) := k + t_-(b) \oplus b$ 
as explained in  \re{zeta}.  

{\bf (f)}		
The synthetic 
split $\Delta_{t_-}$ showing up in the last 
column of the above diagram is then understood to be 
defined as the synthetic 
split $\Delta_\cals =: \Delta_{t_-}$ 
of the splitexact sequence 
\if 0
The 
algebra homomorphisms of the splitexact sequence $\cals$ 
of the last column of the above diagram  
that 
define the synthetical split $\Delta_\cals = \Delta_{t_- \oplus \id_B} =: \Delta_{t_-}$ are then
\fi 
\begin{equation}   \label{eq8b}
\xymatrix{ 
\cals: 
0   \ar[r]     & \bfk     
 	\ar[rr]_{\id_\bfk \oplus 0_B}  &  & \bfk 
\stackrel{t_-}{\oplus}   B      \ar@<.5ex>[rr]^{ (0 \oplus 
\id_B)  \circ \zeta_1 }    
 	\ar@<-.5ex>[ll]_{ \Delta_{t_-}}   
& &   B \ar[ll]^{ \zeta_1^{-1} \circ (t_- \oplus \id_B) }   \ar[r]   
& 0        
}
\end{equation} 
found 
embedded in the last column of the above diagram. 
\if 0
$\id_\bfk \oplus 0_B$, 
$(t_- \oplus \id_B) \cdot \zeta_1^{-1}$ and $\zeta_1 \cdot (0 \oplus 
\id_B)$, all to be found in the last column of the above diagram .
%
\fi 

We notice that 
the homomorphisms of \re{eq8b} 
are actually understood to point to the algebra  $\bfk 
\stackrel{t_-}{\oplus}   B$ of the {\em fourth} line
 of the above diagram, and not to the 
sixth line. 
This simple 
` flaw' in the diagram of the 
theorem should make 
the diagram appear simpler.

\big (However, $\zeta_1^{-1} \circ (t_+ \oplus \id_B)$ 
points correctly to the {\em sixth} line. 
In other words, 
$t_\pm$  
map into the different corner algebras of the `starter' 
algebra 
$M_2 \big ( \bfk 
\stackrel{t_-}{\oplus}   
  B    \big ) $ 
of the {\em fifth} line of the above diagram
  according to \re{eq10}.\big ) 
 

{\bf (g)}			
By obvious commutativity of 
the third and fourth line of the above diagram, 
we get 
$(\id_\bfk \oplus 0) \cdot (S_+ \oplus \id_B) 
\na_{S_- \oplus \id_B} = S_+ \na_{S_-}$  
(in words: $\id_\bfk \oplus 0$  multiplied by   line 4  is line 3)
and thus, 
by the splitexactness axiom 
of  
Subsection \ref{subsec22}.(r)  
for $\cals$ at the middle space
$\bfk 
\stackrel{t_-}{\oplus}   
  B$  of \re{eq8b},  
  that  
$$\Delta_{t_-} \cdot  S_+ \na_{S_-} =  
\Delta_{t_-} \cdot (\id_\bfk \oplus 0) \cdot (S_+ \oplus \id_B) 
\cdot  
\na_{S_- \oplus \id_B}$$
$$= \big (1- \zeta_1  
(0 \oplus \id_B) (t_- \oplus \id_B) \zeta_1^{-1}  \big ) 
\cdot (S_+ \oplus \id_B) 
 \cdot \na_{S_- \oplus \id_B}$$
$$= (1- 0_\bfk \oplus \id_B  \big ) 
\cdot (S_+ \oplus \id_B) 
\na_{S_- \oplus \id_B} = (S_+ \oplus \id_B) 
\na_{S_- \oplus \id_B}  , $$
the last identity being because $(0 \oplus \id_B) \cdot 
(S_+ \oplus \id_B) = (0 \oplus \id_B) \cdot 
(S_- \oplus \id_B)$ and $s \na_s = 0$ holds in general 
by \cite[Remark 3.6.(iii)]{gk}. 
This shows the claim. 

 \if 0
 $J$ ideal in $\call X$, denn $j$ als $j j$, dann mulitiplier ,
 $j j m \in J J M \in J X \in J$

Line 5:

The $M_2$ space we let of course determine 
such that $\zeta_2 \otimes \id_{M_2}$ becomes equivariant.  
  \fi
  
  {\bf Line 5:} 
  Supposing 
  the $M_2$-action of line 4 
  of the above diagram is special
  (what we shall show), we can apply 
  Lemma \ref{lemma31}		
  and obtain $e_1^{-1} \cdot (S_+ \oplus \id_B) \na_{S_- \oplus \id_B} 
   = v_+ \na_{v_-}$ 
   (in words: $e_1^{-1}$  multiplied by   line 4  is line 5).  
   \if 0
   ,    
   and if it is not guaranteed to be special, namely 
   when the matrix size $n$ is infinite, 
    $e_1$ must be very special by the assumptions of the theorem, 
    in which 
   case the fifth line is easy to create, see Lemma \ref{lemma31} 
   again.
   \fi  
   The arrows $E_1,E_2$ in the above diagram are the canonical 
   corner embeddings.  
   
   \if 0
   The other case will be treated below. 
   
   Otherwise $e_1$ must be very special by the assumptions of the theorem - to be 
   concluded below - in which 
   case the fifth line is easy to create, confer the item ``Very special'' 
   of the proof of \cite{Propsition}[k]. 
   \fi 
  

  {\bf Line 6:} 
  By commuting diagrams, Subsection \ref{subsec24}, we get 
$e_2 \cdot  v_+ \na_{v_-} = (S_+ \oplus \id_B) \na_{S_- \oplus \id_B} $
(in words: $e_2$  multiplied by   line 5  is line 6). 

{\bf Line 8:}  

If $z_+ \na_{z_-}$ labels the 
seventh line of the above diagram, one checks with 
Subsection \ref{subsec24} that $(t_+ \oplus \id_B) \cdot z_+ \na_{z_-} = x_+ 
\na_{x_-}$ (in words:  
${t_+ \oplus \id_B}$  multiplied by   line 7  is line 8). 
   Thereby, the precise meaning of $x_\pm$ 
   is 
   \be{eq72}
   x_\pm(b)   := 
    z_\pm \circ (t_+ \oplus \id_B) (b) =   
 \big ( S_\pm  \circ (t_+ - t_-) (b)  + T_- (b)  \big )  \oplus b 
 \en  
 for all $b \in B$.  

The middle up-arrow $\id$ starting from line 8 is valid, 
as $S_\pm \big ((t_+ - t_-)(b) \big ) \in S_\pm (\bfk) \in \bfx$, which is the 
defining ideal of $\call \bfx   
   \square_{T_-}      B$ (see Subsection \ref{subsec28}), because it is the isomorphic image of $\zeta_2$ (see \re{zeta}), 
   and thus - by \re{middle} and the definition 
   that $\jmath (\bfj)$ 
   is the defining ideal of 
   $\call  \bfx   
\square_{x_-} 
  B$ -
   $$\jmath (\bfj) \oplus 0_B + x_-(b)  \in \bfx \oplus 0_B + T_-(b) \oplus b \in  \call \bfx   
   \square_{T_-}      B   . $$ 

Note that 
$\jmath  
(\bfj)$ is indeed an ideal in $\call_\bfx (\bfx)$, 
because it is one in $\bfx$ \big (because of quadratik algebras: 
$j_1 j_2 \in \jmath (\bfj)$ for $j_1,j_2 \in \jmath (\bfj)$, thus $j_1 j_2 \cdot \call \bfx  
\subseteq j_1 \cdot  \bfx 
\subseteq \jmath (\bfj)$\big ).

   {\bf $M_2$-spaces:} 
   
   We are going to define the $M_2$-spaces of all lines, 
   and must also verify that the maps $s_-$ and $s_+$ 
   of a line $s_+ \na_{s_-}$ map equivariantly into the upper 
   left and lower right corner, respectively,  of the $M_2$-space 
   of that line. 
   
   Our ultimate goal is to define the $M_2$-action of the fifth 
   line of the above diagram, because all other spaces 
   of the above diagram map injectively into that $M_2$-space 
   via some algebra homomorphisms,
   and these injections must be equivariant. 

      {\bf $M_2$-space of Line 2:}   

   Recall that $ \big (M_2(X), \ad(\gamma_- \oplus \gamma_+) \big)$ is the $M_2$-space of the first line of the above diagram. 
   
   {\bf (m)}
   
   The $M_2$-space of the second line of the above diagram is determined by  
   Lemma \ref{lemma31}, and since 
   $e:(\bfd,\delta) \rightarrow (\bfd, \delta) \otimes (M_n, \sigma)$  
   is supposed to be very special, 
   we obtain the $M_2$-space of the second line of the above
   diagram to be the 
   $G$-invariant subalgebra 
   \if 0
   (because compact operators $M_n(X) \cong \calk_{M_n(X)} 
   (M_n(X))$ are invariant)
   \fi    
       \be{eq120}
   M_2 \big (   M_n(X) \big )  
   \subseteq 
   \Big (M_2 \big ( \call_{ M_n(X)} \big ( M_n(X) \big  )\big ), \ad( 
   \mu_- \oplus \mu_+  
   ) \Big ) , 
   \en
   $$ \mu_\pm := \gamma_\pm \otimes \sigma$$ 
	with 
	$\mu_+$ 
	being a right $G$-module action on the $G$-algebra
	$(X \otimes M_n, 
	\mu_- )$.  

	{\bf (n)}		
	Also as stated by Lemma \ref{lemma31}, the $M_2$-space \re{eq120} 
	is special if $n$ is finite, and even very special if 
	the $M_2$-space of $s_+ \na_{s_-}$ is very special. 
	These assumptions are made in the statement of the theorem.  
	
	\if 0
	Because the $M_2$-space of the first line 
	of the above diagram is special, so can be extended to 
	$\big (M_2(X^+), \ad(\tilde \gamma_- \oplus \tilde \gamma_+) \big )$, 
	the $M_2$-space \re{eq120} can be obviously extended 
	to $\big (M_2(M_n(X^+)), \ad(\tilde \gamma_-  \otimes 
	\sigma  \oplus \tilde \gamma_+ \otimes 
	\sigma ) \big )$, so \re{eq120} 
	is special if $n$ is finite, since then 
	
	It is also special, because in fact very special, if \re{eq120} is very special, since then $\gamma_+ = \gamma_-$. 
	   \fi
	   
	{\bf (o)}    	
	Ensured by Lemma \ref{lemma31}, 
	  the algebra homomorphisms $u_-,u_+$ of the second line of the above diagram must be $G$-equivariant 
	  maps into the upper left and lower right corner embeddings,
	  respectively, of 
	  the $M_2$-space \re{eq120}, that is,  
	  with respect to $\ad(\mu_-)$ and $\ad(\mu_+)$ 
	  (as multiplication operators), 
	  or simply with respect to $\mu_-$ and $\mu_+$ 
	  (directly).  
	
      {\bf $M_2$-space of Line 3:} 
	
	\if 0
   If $\bfk = (\bfk, \alpha)$ and the 
   `unitalized' $M_2$-space 
   of the second line of the above diagram is 
   \be{eq120}
   M_2 \big (   M_\infty(X) \big )  
   \subseteq 
   \Big (M_2 \big ( \call M_\infty(X) \big ), \ad( \Omega_-  \oplus \Omega_-) \Big ) , 
   \en 
   where $\Omega_\pm$ are the two $G$-module actions on
   $(M_\infty(X) ,\Omega_-)$, see \re{eq77}  - 
   in case that the $M_2$-space is special, see the defintion in Subsection \ref{subsec26}, what we assume for the moment -, then on the third line of the above diagram  
   it is set to
   \fi 
   
   {\bf (p)}			
   Recall from the diagram that $\bfx:= (M_n(X) \square_{S_-} \bfk $. 
   Let $(\bfk, \kappa)$ be the `starter' algebra of line 3. 
   The $M_2$-space of the third line of the above diagram is set to be  
      \be{eq125}
       \Big ( M_2(\bfx), \ad(   \Omega_- \oplus \Omega_+  
       )   \Big ) := 
       \Big ( M_2 \big (M_n(X) \square_{S_-} \bfk \big )  ,  
   \ad \big (( \mu_- 
   \oplus  \kappa )  |_\bfx  
   \oplus 
   (   \mu_+   
   \oplus \kappa )  |_\bfx  \big )   
	\Big ) 
	, 
	\en
	$$ \Omega_\pm := ( \mu_\pm 
   \oplus  \kappa )  |_\bfx  . $$ 
   
   Here, $\mu_+  
   \oplus \kappa$ is a right $G$-module 
   action on the $G$-algebra $ \big (M_n(X) \oplus \bfk,
    \mu_-  
    \oplus \kappa \big )$.

    {\bf (q)}			
	The $G$-actions 
   $\mu_\pm   
   \oplus \kappa$ 
   are indeed invariant on $\bfx$, 
   which we may write both ($+$ and $-$) as  
   $\bfx = \iota_n(\bfj) \oplus 0_\bfk + S_\pm(\bfk)$ 
   by \re{middle} and \re{middle2},  
    since 
    $$(  \mu_{\pm,g}   
    \oplus \kappa_g) \big (\iota_n(j) \oplus 0 +  S_\pm(k) 
    \big) 
    = \mu_{\pm, g} 
    \big(\iota_n(j) \big) \oplus 0 + S_\pm \big (\delta(k)  \big ) 
    \in  
   \bfx$$ 
   for all $j \in \bfj, k 
   \in \bfk$, 
   since the ideal $M_2 \big (\iota_n (\bfj) \big)$ is \big (always by Subsection
   \ref{subsec11}.(g)\big ) invariant under the $M_2$-space of the second line of the above diagram, and by item 
   (o) above.
   
   \if 0
   since by proof step ``Line 2'', $u_\pm$ 
   are verified to be equivaraint into the $M_2$-space of the second line of the above diagram, and since by qudaratik of $\bfj$,
   $\Omega_\pm(j_1 j_2)= \Omega_\pm(j_1) \Omega_-( j_2) \in \bfj$
   for all $j_1,j_2 \in \bfj$.  
   \fi 
   
   \if 0
   The 
   $G$-actions 
   $\Omega_\pm \oplus \alpha$ 
   are indeed invariant on $\bfx$
    as $S_\pm$ from the diagram is $G$-equivaraint with repect to 
    $\Omega_{\pm} \oplus \kappa$, respectively, and thus 
        $$(\Omega_{\pm,g} \oplus \kappa_g)(j \oplus 0 + S_\pm(a) ) 
    = \Omega_{\pm, g}(j) \oplus 0 + S_\pm(\kappa_g(a))   \in  
   \bfx$$ 
   \if 0
    $$(\Omega_{\pm,g} \oplus \alpha_g)(j + s_\pm(a) \oplus a) 
    = \Omega_{\pm, g}(j) + s_\pm(\alpha_g(a)) \oplus \alpha_g(a)) \in  
   \bfx$$  
   \fi 
   for all $j \in \iota_n (\bfj) \oplus 0_\bfk$ and $a \in \bfk$.  
   \fi 
   
   {\bf (r)}			
   Since \re{eq120}, that is the $M_2$-space of the second line of the above diagram, is special by (n), we have  by Lemma \ref{lemma32}  
   and the formula \re{eq300} for the $G$-action $\delta$ there 
   (to be evaluated in each $g \in G$ - not notated) 
 \be{eq122}
   \Omega_- \circ \Omega_+^{-1}   - \Omega_- \circ \Omega_-^{-1}  
   =  (\mu_- \circ \mu_+^{-1}   \oplus  \id_\bfk 
   -  \mu_- \circ \mu_+^{-1}  
   \oplus \id_\bfk 
   )|_\bfx  \; \in  \;   
     \jmath (\bfj)   .  
     \en 
 \if 0
    \be{eq122}
   \mu_- \circ \mu_+^{-1}   - \mu_- \circ \mu_-^{-1}  
   =  (\Omega_- \circ \Omega_+^{-1}   \oplus  \id_\bfk 
   -  \Omega_- \circ \Omega_+^{-1}  
   \oplus \id_\bfk 
   )|_\bfx  \; \in  \;   
     \iota_n(\bfj)  \oplus 0 
     \en 
 \fi
 
 That is, without the restriction to $\bfx$ this is a multiplication operator in 
 $\jmath (\bfj) \subseteq M_n(X) \oplus \bfk$, so that 
 its restriction to $\bfx$ is also such an multiplication
 operator, since 
 $\jmath (\bfj) $   is an ideal in $\bfx$. 
 
 Hence, \re{eq125} defines a special action on $M_2(\bfx)$ by Lemma \ref{lemma32} and \re{eq122}. 
 
 \if 0
 {\bf (o)}
 Since $u_-$ and $u_+$ map {\em  equivariantly} into the upper left
 and lower right corner of  the matrix 
  \re{eq120}, respectively, 
 and $\phi$ is equivaraint, $S_-$ and $S_+$ 
 map {\em equivariantly} into the upper left and lower right corner
 of the matrix \re{eq125}, respectively.  
 \fi 
 
{\bf Equivariance of $\vartheta, \ol \vartheta, T_-$:} 

We have defined 
$\mu_\pm $  
 in \re{eq120} 
 as anticipated in \re{eq67}, and thus  
 $\varphi_\infty $ is equivariant with respect to both $G$-actions 
 $\pm$ 
 notated  
 in \re{eq67}, and hence $\vartheta$ and $\overline \vartheta$, respectively, - see \re{eq128}  and \re{eq129} - are equivariant with respect to 
 (both $+$ and $-$ simultaneously) 
 $\Omega_\pm$ and $\ad(\Omega_\pm)$, respectively.  
 
 
 Consequently, 
 as $t_-$ is equivariant by assumption, $T_-$ is equivariant 
 with respect to 
 (both $+$ and $-$) $\ad(\Omega_\pm)$,    
 respectively, see \re{eq59}.

      {\bf $M_2$-space of Line 4:} 
    
    We cannot canonically define the $M_2$-action
    of the fourth line of the above diagram, but of the 
    non-equivariantly isomorphic seventh line of the above diagram:
    
    {\bf (s)}				
    The $M_2$-space of the {\em seventh} line of the above diagram
    is {\em preliminary} set to be  (recall 
    $\Omega_\pm$  from \re{eq125})
                  \be{eq123}
        \Big ( M_2  \big (
    \call_\bfx 
    (\bfx 
    ) \; 
    \square_{T_-}   B \big ) 
    \subseteq 
    \Big ( M_2 \big (\call_{\bfx \oplus B}(\bfx \oplus B)  
     \big ) ,
        \ad  ( 
        \Omega_- \oplus \beta   \oplus 
       \Omega_+  
       \oplus \beta) 
    \Big )   . 
    \en     
    
    Applying this inclusion, 
    \re{eq122} and formula \re{eq300}  
    to lemma  \ref{lemma32} 
    \big (and note that by \re{middle} the ``ideal $J$''  
    of that lemma is $\bfx \oplus 0_B \cong 
    \calk_{\bfx}(\bfx)$ 
    and thus $G$-invariant as always compact operators are, and the ``subalgebra $A$'' of that lemma is 
    $\{T_-(b) \oplus b | b \in B\}$, which is also $G$-invariant 
    under the left top corner action as
    $T_-$ is equivariant for $\ad(\Omega_-)$\big ),
    we see that the left hand side algebra of \re{eq123} is 
    indeed a $G$-invariant subalgebra  with a special $G$-action.

    \if 0
    Applying lemma to the bigger algebra 
    $M_2(\call_{\bfx \oplus B}(\bfx \oplus B))$ 
    with the last indicated $G$-action,
    \fi 
    
	{\bf (t)}			

	By {\em assumption}, 
	the `starter' algebra 	
	of the {\em fourth} line of the above diagram  
	is  $\big (\bfk \stackrel{t_-}{\oplus} B,  \kappa \oplus \beta \big )$, 
	say, and 
	applying to this space the isomorphism $\zeta$ 
	of Subsection \ref{subsec28} we get the 
	space $\big (\call \bfk \square_{t_-} B, 
	\ad (\kappa) 
	\oplus \beta \big )$ (ideal is $\bfk$), 
	what we assume as the definition for the `starter' algebra of the {\em seventh}  line of the above
	diagram. 
	
	Because of 
	proven equivariance of $S_\pm$ and 
	$T_-$, both algebra homomorphisms 
	notated in the {\em seventh} line of the above diagram, 
	formerly denoted by
	$z_-$ and $z_+$ in 
	proof step ``Line 8'', are {\em equivariant} 
	 maps into the upper left and lower right corner of 
	the $M_2$-space  \re{eq123}, respectively.
	
	Now we {\em define} the $G$-actions of the sixth line of the above diagram including 
	its $M_2$-space in such a way that  
	the algebra isomorphisms $\zeta_2 \otimes \id_{M_2}$  and $\zeta_1$ between line 6 and 7, respectively, become $G$-equivariant. 
	
	Finally, we use this definition of (the $G$-actions of) 
	the sixth line of the above diagram as a definition for the {\em fourth} line
	of the above diagram . 
	 (Observe that the $G$-actions of its `starter' algebra 
	 has not changed, so is $\kappa \oplus \beta$ again.) 
	 
	      {\bf $M_2$-spaces of Lines 5-8:} 

		
	{\bf (u)}			
	We have seen from the preliminary seventh line that 
	it has special $M_2$-space, see 
	(s), which now has by `copy-paste' 
	up to isomorphism also the sixth and fourth line. 
  
  	Hence, 
   the $M_2$-action of the fifth line of the above diagram 
   is again determined by Lemma 
   \ref{lemma31} and the fourth line.

   {\bf (v)}					
   The $M_2$-action 
   of the sixth line 
   is defined  such that the injection 
   (corner embedding) $E_2 \otimes \id_{M_2}$ becomes equivariant. 
   
   The $M_2$-space of the 
   seventh line of the diagram is determined by the 
   sixth line  
such that $\zeta_2 \otimes \id_{M_2}$ becomes equivariant, 
and it is restricted to define the  invariant $M_2$-space of the 
eigth line. 

\if 0
 Now if the matrix size $n$ is neither finite ,
 but by the assumptions of the theorem
      $e_1$ is then very special, and then the $M_2$ space 
      of the fifth line of the above diagram
      is easy to define anyway
      - confer the item ``Very special'' 
   of the proof of \cite{Proposition}[k] -, 
      so we do not need Lemma  \ref{lemma31} 
      \fi

 {\bf Final:} 
 Writing down $t_+ \na_{t_-}$ as in 
 \re{eqt} and then multiplying it with $s_+ \na_{s_-}$, then successively the factors of $t_+ \na_{t_-}$ 
 are absorbed with every step we go down at the diagram,
 that is, $e^{-1} \cdot s_+ \na_{s_-} = u_+ \na_{u_-}$ (line 2),
 $\phi \cdot u_+ \na_{u_-} = S_+ \na_{S_-}$ (line 3)  
 et cetera, until we end up with $x_+ \na_{x_-}$, 
 which is the claim of the proposition. 
  
  {\bf Formula:} 
  The indicated formula of $x_\pm$ comes out 
  by starting from \re{eq72} and successively entering 
  the defined maps.

  {\bf Homotopy.} 
  Observe that we nowhere used homotopies, also not in the 
  lemmas of Section \ref{sec3} 
  or in 
  Subsection \ref{subsec24} 
  about commuting diagrams. 
  
  {\bf Center.}
  We show that if $\varphi$ maps into the center and is equivariant
  with respect to $\ad(\gamma_-)$ so with respect to $\ad(\gamma_+)$.
  Let $T:= \varphi(d)$. Then $\gamma_{-,g} \circ T \circ \gamma_{-,g}^{-1}
  = \gamma_{+,g} \circ T \circ \gamma_{+,g}^{-1}$ if and only if 
  $T \circ \gamma_{-,g}^{-1} \circ \gamma_{+,g}  
  = \gamma_{-,g}^{-1} \circ \gamma_{+,g} \circ T$. 
  \if 0 
  if $\varphi$ maps into the center and is equivariant
  with respect to $\ad(\gamma_-)$ so with respect to $\ad(\gamma_+)$. 
  \fi 
 \end{proof}

\if 0
Observe that the formula for the above product
stands in the last line of the above diagram 
(together with the third line and \re{eq59}).

genau aktion $E$, mit $\bfc$ zneter multiplier, 
auch noch $*$ in $C^*$a-kgbras chekcen
\fi 

As a  corollary of the last theorem,  
we can now deduce 
the computation of 
the $K$-homology $K$-theory product: 

\if 0
Analogously to Kasparov's $RKK^G$-theory where the algebras 
are $C_0(X)$-algebras and cycles 
are compatible with respect to $C_0(X)$, we may 
modify the definition of $GK^G$-theory by 
considering a variant of it we only
considering $C_0(X)$-algebras
and assuming that
all homomoprhisms are $\bfc$- 
 \fi 
 
 \begin{corollary}[$K$-homology $K$-theory product]
 			\label{cor31}
 
 If $\bfc$ is unital and $G$ 
 a unital inverse semigroup, 
 in very special 
 $GK^G$-theory 
 (in particular for $\bfc:= \C$ if $G$ is a group) 
 the last theorem applies, that is, we 
 have the 
 explicitly computable product map  
$$L_1 GK^G (A,\bfc)  \otimes_\Z  L_1 GK^G (\bfc, B) 
 \rightarrow L_1 GK^G (A,B) :  z \otimes w \mapsto z \cdot w  . $$
  
  This holds analogously for $C^*$-theory, that is,
  after (easy and explicit) back and forth translation $KK^G \cong GK^G$ \cite{aspects} (only very special $KK^G$-theory), the 
  intersection product 
  $$KK^G (A,\bfc)  \otimes_\Z  KK^G (\bfc, B) 
 \rightarrow KK^G (A,B) :  z \otimes w \mapsto z \otimes_\bfc w$$
	is explicitly computable by the formula indicated in 
	Theorem \ref{thm1}. 
	
 \end{corollary}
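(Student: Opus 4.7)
The plan is to deduce Corollary \ref{cor31} by verifying that all the hypotheses of Theorem \ref{thm1} are automatically met in very special $GK^G$-theory when the intermediate object is $\bfc$. So given any two level-one presentations $s_+ \na_{s_-}$ of $z \in L_1 GK^G(A,\bfc)$ and $t_+ \na_{t_-}$ of $w \in L_1 GK^G(\bfc,B)$, I would work through the assumptions of Theorem \ref{thm1} one by one and show each is either built into the definition of very special $GK^G$-theory or follows from $\bfc$ being unital.

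First I would dispose of the ``easy'' hypotheses. Because we are in very special $GK^G$-theory (Subsection \ref{subsec26}), the only axiomatically invertible corner embeddings are very special matrix corner embeddings, so the corner embedding $e$ in the last column of $t_+ \na_{t_-}$ is automatically very special, and the factorization $\phi \cdot e^{-1}$ in the theorem is trivially available (take $\phi$ to be the identity or, when using Lemma \ref{lemma21}--style rewrites, the relevant injective homomorphism). The algebra homomorphism $\varphi: \bfc \to \call_X(X)$ and the requirement that $s_\pm$ be $\bfc$-bimodule maps are part of the foundational setup: by Subsection \ref{sec22}.(d) every $G$-algebra is automatically a $\bfc$-bimodule and every equivariant homomorphism is a $\bfc$-bimodule map, and by Subsection \ref{sec23}.(c) this $\bfc$-action on $X$ extends to adjointable operators with image in $\calz \call_X(X)$. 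Equivariance with respect to $\ad(\gamma_-)$ is then obvious, and the ``Center'' remark at the end of the proof of Theorem \ref{thm1} upgrades this to simultaneous equivariance for both $\ad(\gamma_\pm)$. For the approximate unit of projections on $\bfk$: since $e : \bfc \to M_n \otimes \bfc$ is very special and $\bfc$ is unital, $M_n(\bfc)$ has the increasing net of diagonal matrix-unit projections $\sum_{i \le k} e_{ii} \otimes 1_\bfc$ as an approximate unit, and the properties needed of $\bfk$ are inherited through $\phi$.

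The main obstacle I expect is arranging speciality (respectively very-speciality) of the $M_2$-spaces of the two given level-one elements, so that the hypotheses ``$M_2$-space of $s_+ \na_{s_-}$ is special (or very special if $n = \infty$)'' and ``$M_2$-space of $t_+ \na_{t_-}$ is special'' hold. The strategy is to exploit the fact that in very special $GK^G$-theory one may replace any $L_1$-presentation by one in which the axiomatic inverse corner embedding $e^{-1}$ has been absorbed using the format $e^{-1} = \phi \cdot f^{-1}$ of Subsection \ref{subsec211} with $f$ very special; combined with Lemma \ref{lemma31} this forces the ambient $M_2$-space to be (very) special. Thus in the case where $n$ is infinite one lands in a very special $M_2$-space, while for finite $n$ one lands in a merely special one, exactly matching the two cases of Theorem \ref{thm1}.

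Finally, the $C^*$-algebraic version is immediate: by Subsection \ref{subseckk}, under $KK^G \cong GK^G$ every element of $KK^G(A,\bfc)$ and $KK^G(\bfc,B)$ corresponds to a level-one element in very special $GK^G$-theory, so the above verification applies verbatim (with $M_n$ replaced by $\calk$ when $n = \infty$), and the intersection product $\otimes_\bfc$ translates to the $GK^G$-product of the associated level-one elements, yielding the claimed explicit formula from Theorem \ref{thm1}.
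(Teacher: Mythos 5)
Your proposal is correct and follows essentially the same route as the paper: one verifies the hypotheses of Theorem \ref{thm1} with the canonical choices $\bfd:=\bfc$, $\bfk:=M_n(\bfc)$, $\phi:=\id_{M_n(\bfc)}$, the increasing approximate unit of projections coming from unitality of $\bfc$, and $\varphi$ given by the central $\bfc$-bimodule action on $X$ (whose image in $\calz\call_X(X)$ yields simultaneous $\ad(\gamma_\pm)$-equivariance and the $\bfc$-bimodule property of $s_\pm$ for free). The only wobble is your justification of the (very) speciality of the $M_2$-spaces via the $e^{-1}=\phi\cdot f^{-1}$ rewriting of Subsection \ref{subsec211} together with Lemma \ref{lemma31}: that detour is both unnecessary and somewhat circular here (Lemma \ref{lemma31} presupposes the speciality you are trying to establish, and Lemma \ref{lemma21} is only stated for finite groups, not for unital inverse semigroups), whereas in very special $GK^G$-theory the $M_2$-actions are of the form $\id_{M_2}\otimes\gamma$ by definition, so $\gamma_+=\gamma_-$ and very-speciality propagates through the entire diagram for any matrix size $n$ --- in particular your remark that finite $n$ lands only in a ``merely special'' $M_2$-space reverses the direction of the theorem's hypothesis, though this does not affect the validity of the conclusion.
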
 

\begin{proof}

We apply Theorem 
\ref{thm1}  to the case $\bfd:=\bfc$,
$\bfk :=M_n(\bfc)$, 
which has  
an obvious approximate unit $(p_n)$ of projections (as $\bfc$ is unital), 
and to $\phi :=\id_{M_n(\bfc)}$,  
and $\varphi: (\bfc,\chi) \rightarrow \big (\call_X(X),\ad(\gamma_\pm)\big )$ 
defined by $\varphi(c)(x):= x * c$ for $c \in \bfc,x \in X$ 
- that is, in other words, 
$\varphi(e) = (\gamma_-)_e \in \calz \call_X(X)$ for 
$e \in E$ -, 
\if 0
 \big (that is, $\varphi(e)(x):= x * e =   
(\gamma_-)_e(x)$ for $e \in E, x \in X$\big ) 
\fi 
as described in 
 Subsection \ref{sec22}.(c), and recall Subsection \ref{sec23}.(c). 
 Note that $\varphi$ is equivariant with respect to $\ad(\gamma_-)$ 
 and thus with respect to $\ad(\gamma_+)$ by 
 the theorem as 
 $\varphi$  maps into the center.
 
 Also, if all inverted 
 corner embeddings in $s_+ \na_{s_-}$ and $t_+ \na_{t_-}$ 
 are very special, then $\gamma_+ = \gamma_-$ and finally 
 $\mu_+ = \mu_-$ in \re{eq123} and the whole diagram of the theorem
 consists of very special inverted 
 corner embeddings only, confer Lemma 
  \ref{lemma31} . 
  \if 0
and all given inverted corner 
embeddings 
in the diagram of the theorem  
to be very special (the 
remaining ones in the diagram become automatically 
very special, beacuse $S=T$ in Lemma \ref{lemma31}, see then the last 
assertian). 
\fi 
\end{proof}

 Actually we have  \re{eq33} below, 
 which formally expands the domain of the above product function. 
 For $\bfc :=\C$ 
 this was proven in \cite{gk2},  but 
 the proof 
 there seems to go through without any essential 
 modification also for $\bfc$-compatible $GK^G$-theory 
 by verbatim the same proof: 
 
 \begin{lemma}[Conjecture]		\label{cor32}

If $G$ is an inverse semigroup and $\bfc$ and $G$ 
are unital, 
then  
all results of \cite{gk2}    
hold also true in $\bfc$-compatible, inverse semigroup equivariant   $GK^G$-theory 
if $\C$ is replaced by $\bfc$ there. 
In particular, as sets we have 
by \cite[Corollary 3.5]{gk2}  
that 
\be{eq33}
L_1 GK^G (\bfc, B) = GK^G(\bfc, B)    . 
\en 
  

\end{lemma}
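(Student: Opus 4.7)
The plan is to verify, largely by inspection, that every argument in \cite{gk2} survives when $\C$ is replaced by $\bfc$ and groups are replaced by unital inverse semigroups, exploiting the fact that in the $\bfc$-compatible framework all algebras and module/algebra homomorphisms are automatically $\bfc$-bimodule maps (Subsection \ref{sec22}.(d)), so that $\bfc$ behaves formally like $\C$ did in the group case. First I would list the constructions of \cite{gk2} that are used in the proof of \cite[Corollary 3.5]{gk2} (the $L_1$-decomposition of an arbitrary element of $GK^G(\bfc,B)$) and translate each one to the $\bfc$-compatible setting: unitizations $A^+$ via $\call_A(A \oplus A)$, corner embeddings into $\calk_A(\cale \oplus A)$, the middle-space construction of Subsection \ref{subsec28}, and manipulation of extended double splitexact sequences. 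In each case the relevant $G$-invariance and equivariance properties hold unchanged because the idempotents $E \subseteq G$ act via central multipliers (Subsection \ref{sec23}.(c)), so the additional compatibility condition \re{eq302}/\re{eq303} is automatic along all arrows we build.

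Second I would revisit the key technical steps of \cite{gk2}: (i) the formula $e^{-1} \cdot z \in L_1$ for inverted very special corner embeddings, (ii) the fusion of an algebra homomorphism from $\bfc$ with a level-one morphism, and (iii) the bimodule-map hypothesis used to build the middle space and apply Lemma \ref{lemma22}. In the group case the map $\C \to \call_X(X)$ was the canonical scalar inclusion; in our case the analogue is the map $\varphi:\bfc \to \calz\call_X(X)$ sending $e \mapsto (\gamma_-)_e$, as used already in the proof of Corollary \ref{cor31}. This $\varphi$ is automatically equivariant with respect to both $\ad(\gamma_-)$ and $\ad(\gamma_+)$ because it lands in the center, and $s_\pm$ are $\bfc$-bimodule maps by Subsection \ref{sec22}.(d). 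These are exactly the hypotheses needed in Theorem \ref{thm1}, and therefore the same fusion and splitexactness manipulations that were used in \cite{gk2} to produce the level-one decomposition of an arbitrary element of $GK^G(\C,B)$ apply now to produce one of an arbitrary element of $GK^G(\bfc,B)$.

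Third I would carry out the bookkeeping: whenever \cite{gk2} writes $\C$ in a formula (as the coefficient algebra, as the unit object, as the entry $e_{11}$ in a matrix unit, or as the domain of a $K$-theory element), I replace it by $\bfc$ and check that the resulting algebra lies in the chosen set $R$, that the $G$-action on it is legitimate (which is the case since $(\bfc,\chi)$ is by construction a unital $G$-algebra), and that any identification of the form $\bfc \otimes A \cong A$ used there is replaced by the $\bfc$-balanced tensor product, which in the $\bfc$-compatible setting is the same as the ordinary tensor product modulo the identification $e \otimes a = 1 \otimes (e*a)$. Each such replacement is routine.

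The main obstacle I anticipate is not in any single step but in ensuring that the inverse-semigroup equivariance, through the idempotents $E$, does not interfere with the algebraic manipulations: in particular, the construction of unitizations, $M_2$-actions, and middle spaces must be checked to be compatible with the $E$-equivariance standing assumption of Subsection \ref{sec22}.(c). Here the key observation is \re{eq302}, which guarantees that idempotent action is always realized by central multipliers, so every new algebra we form inherits a canonical $\bfc$-bimodule structure and every new homomorphism is automatically a $\bfc$-bimodule map. Once this is verified at the level of the basic constructions, the proofs of \cite{gk2} transfer verbatim, and in particular \re{eq33} follows from the $\bfc$-version of \cite[Corollary 3.5]{gk2}.
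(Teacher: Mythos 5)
Your proposal takes essentially the same approach as the paper, which in fact offers no proof at all: the statement is explicitly labeled a conjecture and is justified only by the preceding remark that the proof in \cite{gk2} ``seems to go through without any essential modification \ldots by verbatim the same proof.'' Your elaboration of which mechanisms make the verbatim transfer work (the automatic $\bfc$-bimodule structure from Subsection \ref{sec22}.(d), the central realization of the idempotent action from Subsection \ref{sec23}.(c), and the map $\varphi:\bfc\rightarrow\calz\call_X(X)$ in place of the scalar inclusion) matches exactly the ingredients the paper itself deploys in the proof of Corollary \ref{cor31}, so you are filling in the intended argument rather than diverging from it.
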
 

\if 0
unitalisierung 
$A \oplus \bfc \rightarrow \call_B$
$\call_A( A \oplus \bfc, )$ 
\fi


 \section{Products with synthetic 
 splits}
					\label{sec5}

\if 0
We close this note by 
summarizing the computable products in 
$GK^G$-theory, as well as  
$KK^G$-theory 
and $kk^G$-theory by analogous methods, 
the latter one via a functor $\Gamma : GK^G \rightarrow kk^G$. 
\fi  
%
 

This lemma shows how to potentially compute the product of a level-one element 
with a synthetic   
split at the first place:  

\begin{lemma}[Fusion with $\Delta_s$]			\label{lemma51}

Consider the split-exact sequence   \re{eq8}. 

{\rm (a)} 
If 
elements $u,v$ in $GK^G$
fulfill $u = 
\iota v$ ($v$ extends $u$),  
then $z :=  
(\id_X - g s ) v$ 
satisfies  $u = 
\iota z$ and  
$$\Delta_s u = z $$   
($u$ can be fused with $\Delta_s$). 
If $v$ is a level-one element, so is $z$.

{\rm (b)} 
Conversely, 
if elements $u,z$ satisfy  $\Delta_s u = z$,  then
$u = 
\iota z$.


\end{lemma}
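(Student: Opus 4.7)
The plan is to derive both parts as direct algebraic consequences of the two splitexactness identities in Subsection \ref{subsec22}.(r), namely
$$\iota \cdot \Delta_s \;=\; \id_J \qquad \text{and} \qquad \Delta_s \cdot \iota \;=\; \id_X - f \cdot s,$$
so that no homotopy, no stability, and no exactness beyond the axioms is needed. (I read the ``$g$'' in $\id_X - g s$ as a typo for the quotient map $f$ of \re{eq8}.)

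Part (b) is a one-liner: from the hypothesis $\Delta_s \cdot u = z$, left-multiply by $\iota$ and reassociate to obtain
$$\iota \cdot z \;=\; \iota \cdot \Delta_s \cdot u \;=\; \id_J \cdot u \;=\; u.$$
For part (a), I would begin with $u = \iota \cdot v$, left-multiply by $\Delta_s$, and apply the second axiom:
$$\Delta_s \cdot u \;=\; \Delta_s \cdot \iota \cdot v \;=\; (\id_X - f \cdot s) \cdot v \;=\; z.$$
The accompanying equation $u = \iota \cdot z$ then drops out from part (b) applied to the just-proved identity $\Delta_s \cdot u = z$; alternatively, it may be checked directly by expanding $\iota \cdot z = \iota \cdot v - \iota \cdot f \cdot s \cdot v$ and invoking $\iota \cdot f = 0$, which already holds at the level of algebra homomorphisms since $f \circ \iota = 0$ in the underlying short exact sequence.

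For the closing ``level-one'' clause, the element $z$ is obtained from $v$ purely by left-composition with the two algebra homomorphisms $\id_X$ and $f \cdot s$; consequently the presentation \re{eq29} of $v \in L_1$ together with its extended double splitexact sequence is preserved (only the outermost left factor on the ``$X$-side'' is altered, no new synthetic $\Delta$ is introduced), and the difference of the two resulting expressions shares the same extended sequence, so $z$ stays in $L_1$. I do not anticipate any real obstacle: the whole argument is axiom-chasing inside the generators-and-relations category $GK^G$.
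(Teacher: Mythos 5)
Your computations for the two identities coincide with the paper's: part (b) is $\iota z=\iota\Delta_s u=\id_J\, u=u$, and the identity in part (a) is $\Delta_s u=\Delta_s\iota v=(\id_X-f\cdot s)\,v=z$, both read off directly from axiom (r) of Subsection \ref{subsec22}. Your reading of the ``$g$'' in the statement as the quotient map $f$ of \re{eq8} is the intended one, and obtaining $u=\iota z$ either from part (b) or from $\iota\cdot f=0$ is fine; up to this point you reproduce the paper's argument.

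The gap is in the closing clause that $z\in L_1$. You claim that the two terms $v$ and $(f\cdot s)\cdot v$ ``share the same extended sequence'' and that therefore their difference stays level-one; neither half of this is right as stated. First, fusing $v=s_+\na_{s_-}$ on the left with the homomorphism $f\cdot s$ changes the presentation: the splits $s_\pm$ of \re{eq700} are replaced by $s_\pm\circ(f\cdot s)$ (and the injectivity required of the splits in Subsection \ref{subsec11}.(d) must be repaired), so the two terms are carried by two \emph{different} extended double splitexact sequences; this fusion is the content of \cite[Lemma 9.5]{gk} (cf.\ also \cite[Lemma 7.2]{gk}), not a tautology. Second, and more seriously, $L_1$ is by definition the set of morphisms of the single-sequence form \re{eq29}, and it is not closed under sums or differences for free: one must manufacture a \emph{single} extended double splitexact sequence presenting $v-(f\cdot s)\cdot v$, which is a direct-sum-of-cycles type construction and is exactly what the paper invokes via \cite[Lemma 9.9]{gk} and \cite[Corollary 9.10]{gk}. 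Without appealing to (or reproving) those summation results, the last sentence of the lemma is asserted rather than proved.
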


\begin{proof}
(a) 
Indeed, by the splitexactness axiom of Subsection.\ref{subsec22}.(r) we get 
$$\Delta_s u = \Delta_s \iota v = (\id_X - gs) v =: z   . $$

If $v$ is a 
level-one element, then 
by \cite[Lemma 9.5]{gk}
we can fuse $v$ with 
$\id_X$ and $gs$, and sum it up 
to a level-one element by 
\cite[Corollary 9.10]{gk} and \cite[Lemma 9.9]{gk}. 

(b) 
By the splitexactness axiom of Subsection.\ref{subsec22}.(r) we 
get 
$u = \id_J 
u = \iota \Delta_s u =  \iota z$.  
\end{proof}

	\if 0
\begin{lemma}

Given $v,w \in L_1$, and $i$ from split-exact sequence, 
assume that 
$i w = v$. Then exists $u := 
(1_M - g s ) w \in L_1$ 
such that $\Delta_s v = u$ and $i u = v$. 

\end{lemma}

\begin{proof}

$$\Delta_s v = \Delta_s i w = (1_M - gs) w = u$$
 

\end{proof}
\fi 

\if 0
\begin{lemma}

$e^{-1} v = w$ iff $v = e w$

\end{lemma}
\fi

\if 0
Analogously to the last lemma, we have $e^{-1} u = \bfv$ iff $u = e \bfv$ 
for an invertible corner emebdding $e$. 
%
%
That is, in order to {\em fuse} an 
element $u \in L_1$ with
synthetical inverse morphism (e.g. build $e^{-1} u, \Delta_s u $ 
and 
$\Delta_s e^{-1} u$, respectively), we just need to {\em extend}
$u$ to an element $\bfv \in L_1$ such that $u$ is the restriction of $\bfv$
with respect to   the defining underlying homomoprhisms 
(i.e. 
obtain $e \bfv=u,i \bfv= u$ and $e i \bfv = u$, respectively). 
\if 0  
Viewed this way, $KK$-theory has 
not only to do with extensions 
of ideals, as it is often correctly considered - this is only accidentically by its splitexactness axiom -,
but with extensions of a format 
(in this case $L_1$) by homomorphisms 
declared to be invertible. 
\fi
\fi 
\if 0
It is 
sufficient to achieve the extension with 
a possible bigger algebra $X$, 
as we directly see from 
$u= \iota_1 (\iota_2 \bfv)$ (two 
$\iota$s rather than one $\iota$). 
\fi 
\if 0
Tautologically by the last lemma, the 
equation $\Delta_s u = \bfz$ 
has a solution $\bfz \in L_1 GK^G(X,A)$ for a given $u \in L_1
GK^G(J,A)$
if and only if $u$ 
is in the range of the  map
$\iota^*: L_1 GK^G(X,A) \rightarrow L_1 GK^G(J,A)$, 
beacuse we need  $u = \iota \bfv = i^*(\bfv)$.  
\fi  
\if 0
If $i^*$ is surjective, the equation has always a sloution.
(This is the case in $KK^G$-theory).  
\fi 
\if 0
That means, $\Delta_s v$ goes for all $s$ iff it goes with
one $s$, and it has actually nothing to do with $s$, but with $i$. 

And it is sufficient to achieve it only for the maxiaml $i$,
that is to the multiplier algebra, because 
if $i_1 i_2 w = v$ then $i_1 w_2 = v$
for $w_2 := i_2 w \in L_1$
 
Same reason: if $e i w = v$ goes, then everything 

(max $e$, max $i$) 

wenn $i w = v$ für $w:= \Delta_t v$, dann bekommt 
man sofort alle $\Delta_s v$ $= (1-gs) w v$

starte umgekehrt mit allen $w$ und bilde $iw$ 
-> wenn alle $v$ erreicht, gehen alle, bzw gehen die , die erreicht

d.h. wenn $L_1 GK(B,M) \rightarrow^{* i} L_1 GK(B,A)$ surjectiv, dann gehen
alle

bzw gehen nur die, die im bild sinf 
\fi 
A special split exact sequence is the canonical one associated to unitization of an algebra $A$ (see Subsection \ref{subsec24}),
that is,  
 \big (here $s(c):= c \cdot1_{A^+}$, $f(a+c\cdot 1_{A^+}):= c$, $\iota(a):= a + 0 \cdot 1_{A^+}$ for all $a \in A, c \in \bfc$\big )  
$$  
\xymatrix{ 
0   \ar[r]     & A      
 	\ar[rr]_{\iota}  &  & A^+       \ar@<.5ex>[rr]^f    
 	\ar@<-.5ex>[ll]_{ \Delta_{A}}   
& &   
\bfc \ar[ll]^{s}   \ar[r]   
& 0        
}
$$		


\begin{corollary}			\label{cor5}

Any 
level-one 
element $x \in L_1$  
with special
$M_2$-space can be fused 
with the elementary split $\Delta_{A}$ 
to 
a level-one element 
$\Delta_A \cdot x \in L_1$ 
(provided composability). 

\if 0
Any $L_1$-element $s_+ \na_{s_-} \in L_1 GK^G(A,B)$ with special
$M_2$-space can be fused 
with the elementary split $\Delta_{A}$ ($A^+, \C$-folge) 
to an $L_1$-element 
$\Delta_A \cdot s_+ \na_{s_-} \in L_1 GK^G (A^+ ,B)$
\fi

\if 0
everey elementary split $\Delta_{A}$ ($A^+, \C$-folge) 
can be fused with any $L_1$-ELEMENT TO 
$\Delta_A \cdot s_+ \na_{s_-}= t_+ \na_{t_-}$
\fi 

\end{corollary}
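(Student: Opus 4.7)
The strategy is to reduce to Lemma \ref{lemma51}(a) applied to the unitization sequence $0 \to A \stackrel{\iota}{\to} A^+ \stackrel{f}{\to} \bfc \to 0$ with split $s$. According to that lemma, producing a level-one element $v \in L_1 GK^G(A^+, B)$ satisfying $\iota \cdot v = x$ is enough: then $\Delta_A \cdot x$ will equal the level-one element $(\id_{A^+} - f \cdot s) \cdot v$, which is the claim.

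Write $x = s_+ \na_{s_-}$ with middle algebra $X$, ideal $J$, splits $s_\pm : A \to X$, and special $M_2$-space $(M_2(X), \delta)$; by speciality $\delta$ extends to an action $\tilde\delta$ on $M_2(X^+)$. I would build $v := S_+ \na_{S_-}$ with middle algebra $X^+$, the same ideal $J$, $M_2$-space $(M_2(X^+), \tilde\delta)$, and splits $S_\pm : A^+ \to X^+$ defined by unital extension, $S_\pm(a + c \cdot 1_{A^+}) := s_\pm(a) + c \cdot 1_{X^+}$. The bookkeeping for Subsection \ref{subsec11} is then routine: $J$ remains an ideal in $X^+$; one has $X^+ = J + S_-(A^+) = X + \bfc \cdot 1_{X^+}$ as a linear direct sum, inheriting the direct-sum property of $X = J + s_-(A)$ together with $\bfc \cdot 1_{X^+} \cap X = 0$; the difference $S_+ - S_-$ lands in $J$ because it already does so on $A$ and vanishes on $1_{A^+}$; and the quotient $G$-action on $M_2(X^+)/M_2(J)$ keeps the required form $\id_{M_2} \otimes \alpha^+$ by Lemma \ref{lemma32}.

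With $v$ in hand, the identity $\iota \cdot v = x$ follows by a commuting-diagram argument in the style of Subsection \ref{subsec24}, using the inclusions $\id_J$ and $X \hookrightarrow X^+$ as connecting morphisms between the two extended double splitexact sequences, since $S_\pm$ restricted along $\iota$ visibly agrees with the original $s_\pm$. Lemma \ref{lemma51}(a) then yields $\Delta_A \cdot x = z \in L_1$ as required.

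The main obstacle I anticipate is the $G$-equivariance of $S_\pm$ with respect to the corner restrictions of $\tilde\delta$. When $G$ is a group, automorphisms preserve $1_{X^+}$ automatically and there is nothing to check; for general unital inverse semigroups one must verify that the extended corner actions fix $1_{X^+}$, which should follow from the compatibility axiom \re{eq302} together with the characterization of special $M_2$-actions in Lemma \ref{lemma32}, so that $\tilde\delta$ restricted to either corner intertwines $S_\pm$ with the unitized action $\alpha^+$ on $A^+$.
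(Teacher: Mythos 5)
Your proposal is correct, and the underlying mechanism (unitize the middle space, use speciality to extend the $M_2$-action, then invoke the splitexactness axiom) is the same one the paper ultimately relies on; but the route you take through the paper is genuinely different from the official proof. The paper's proof is a one-liner: it observes that $s\Delta_s$ associated to the unitization sequence is an element of $L_1 GK^G(\bfc,A)$ with very special $M_2$-space, feeds it into the external computation of $\kappa \cdot \bfz$ from \cite[Proposition 3.4]{gk2}, and then ``halts'' that proof at the intermediate stage where $\Delta_s \cdot x$ has already been produced. You instead stay inside the present paper: you build an explicit extension $v = S_+\na_{S_-} \in L_1 GK^G(A^+,B)$ of $x$ over the unitization (middle space $X^+$, same ideal and corner embedding, $S_\pm$ the unital extensions of $s_\pm$, $M_2$-space $(M_2(X^+),\tilde\delta)$ supplied by speciality), verify $x = \iota \cdot v$ by the commuting-diagram principle of Subsection \ref{subsec24} with connecting maps $\id_{B}$, $\id_J$, $X \hookrightarrow X^+$, $\iota$, and then apply Lemma \ref{lemma51}(a) to get $\Delta_A \cdot x = (\id_{A^+} - f\cdot s)\cdot v \in L_1$. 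What your approach buys is self-containedness and transparency -- it makes visible exactly where the speciality hypothesis enters, and it is arguably what Lemma \ref{lemma51} was set up for; what the paper's approach buys is brevity and the stronger byproduct that the further fusion $s\Delta_s \cdot x$ is also in $L_1$ with an explicit formula inherited from \cite{gk2}.

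One point deserves care in your write-up: the hypothesis ``special $M_2$-space'' must be read through Lemma \ref{lemma32}, i.e.\ as the package consisting of an extension $\tilde\delta$ of $\delta$ to $M_2(X^+)$ that keeps the two corners and $M_2(J)$ invariant \emph{and} induces a quotient action of the form $\id_{M_2}\otimes\alpha$ on $M_2(X^+/J)$ (equivalently, condition \re{eq111} for $1_{X^+}$), not merely as the bare existence of some extension as in Subsection \ref{subsec26}. Only with that reading is condition (g) of Subsection \ref{subsec11} available for the new line with middle space $X^+$, and only then do the corner restrictions $\gamma_\pm^+$ of $\tilde\delta$ behave well on $1_{X^+}$ so that $S_\pm$ are equivariant (automatic for groups, as you note, and handled by \re{eq111} in the inverse-semigroup case). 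Since the paper itself uses Lemma \ref{lemma32} as the operative characterization of speciality, this is a matter of stating the hypothesis precisely rather than a gap in the argument.
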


\begin{proof}
The element $s \Delta_{s}$ 
(which is actually zero by \cite[Remark 3.6.(iii)]{gk}) 
associated to the above diagram is in $L_1 GK^G(\bfc,A)$ 
with 
very special $M_2$-space, so can 
be fused by \cite[Proposition 3.4]{gk2}    
to an element $s \Delta_{s} 
\cdot x \in L^1$. We 
halt the proof however already at the stage 
$\Delta_{s} 
\cdot x \in L^1$, yielding the result.
\if 0 
One has $\Delta_A \in L_1 GK^G (A^+ ,A)$  
by \cite[Lemma 7.4]{gk}, 
so the claim 
is a special case of 
\cite[Proposition 3.4]{kt}.  
\fi 
\end{proof}



\if 0

\section{}

$GK(A,-)$ splitexact, dh wenn $L_1 GK(A,-)$ 
splitexact, es folgt 


oben: $L_1 GK(M,B) \rightarrow^{* i} L_1 GK(A,B)$ 
(conrtavariant) 
 \fi
 
 \if 0
\section{}

in $KK^G$ scheint nicht zu gehen, da man $S_\pm = s_\pm \otimes \id$ braucht
für herausziehen matrix multi 

--> JEDOCH WENN jeweils $C$  im center von multiplier algebra,
sollte es mit den disturbing $U$s tauschen  
\fi

\section{Compare of $kk$-theory with $GK$-theory} 
						\label{sec6}


In \cite{cuntz}, Cuntz developed the universal diffeotopy-invariant, 
matrix-stable (where a matrix algebra is closed under a certain 
`smooth' topology) 
and linear-split half-exact 
(i.e. each short exact sequence in algebra with a  linear split  
induces a half-exact sequence in theory) 
theory for 
locally convex algebras induced by submultiplicative 
seminorms, and 
a core construction of its morphisms 
is an exact finite sequence (exact in $\iota$) 
\be{eq160}
\xymatrix{0 \ar[r] & J_1 \ar[r]_{\iota_1}  & J_2 \ar[r]_{\iota_2}   
\ar@<-.5ex>[l]_{t_1}   & 
\ar@<-.5ex>[l]_{t_2}  J_3     \ar[r]_{\iota_3}  & 
\ar@<-.5ex>[l]_{t_3} 
\cdots \ar[r]_{\iota_n} & J_n \ar@<-.5ex>[l]_{t_n}  \ar[r] &   0}
\en 
of algebras $J_i$ and continuous algebra homomorphisms $\iota_i$ 
with continuous {\em linear splits} $t_i$, see \cite[Definition-Satz 3.6]{cuntz}. 
In lemma \ref{lemma62} below it is remarked that the universal splitexact theory 
$GK^G$-theory can also brought to a 
remotely similar pattern, but now with synthetic 
generator splits $\Delta_s$.
%

\if 0
$GK^G$-theory can also brought to this pattern, but nuw with 
algebra homomoprhisms splits. 
This 
seems then to be 
a main difference between 
these theories. 
\fi
\if 0
Anyway - besides the considered class of algebras - 
there is a functur $\Gamma: GK \rightarrow kk$ in principle, because
half-exactness implies  
The precise functor needs of course $GK$-theory to be 
defined 
for the class of locally convex algebras, to be considered elsewhere. 
\fi

This $kk$-theory was modified in \cite{cortinasthom} 
by 
Corti\~nas and Thom 
to obtain $kk$-theory for the class of algebras or rings, 
which again was extended to group equivariant $kk^G$-theory 
(with `very special' $G$-actions) by Ellis in \cite{ellis}. 
Related are also Cuntz and Thom \cite{cuntzthom},
Garkusha \cite{garkusha1,garkusha2}, Weidner \cite{weidner, weidner2} 
and Grensing \cite{grensing}.  
On the other hand, $GK^G$-theory \cite{gk} denotes the 
universal 
stable, (possibly polynomial or smooth) homotopy invariant
and {\em split-exact} theory for rings or algebras. 
\if 0
, and 
a core construction of its morphisms 
is an exact finite sequence 
$\xymatrix{0 \ar[r] & J_1 \ar[r]_{\iota_1}  & J_2 \ar[r]_{\iota_2}   
\ar@<-.5ex>[l]_{s_1}   & \cdots \ar[r]_{\iota_n} & J_n \ar@<-.5ex>[l]_{s_n}  \ar[r] &   0}$
with {\em linear} splits $s_i$, see \cite[Definition-Satz 3.6]{cuntz}. 
In lemma \ref{lemma62} below it is remarked that the universal splitexact thheory 
$GK^G$-theory can also brought to this pattern, but nuw with 
algebra homomoprhisms splits $s_i$s.   
This 
seems then to be 
a main difference between 
these theories. 
\fi 

\if 0
Anyway - besides the considered class of algebras - 
there is a functur $\Gamma: GK \rightarrow kk$ in principle, because
half-exactness implies  
The precise functor needs of course $GK$-theory to be 
defined 
for the class of locally convex algebras, to be considered elsewhere. 
\fi

Even if we partially deviate 
next from the considered class 
of algebras and rings 
for $GK^G$-theory 
than in \cite{gk} 
(note that we shall only use its definition, not any results, 
and 
stability and homotopy invariance 
 only as far as implemented in the respective theory to be compared), we  
\if 0
$GK^G$-theory \cite{gk} 
(but note that we 
only use its definition, not any results), we 
\fi 
might safely remark 
and partially anticipate:

\if 0
Even if we deviate 
next from the considered class 
of algebras than in this note, we 
might safely remark 
and partially anticipate: 
 \fi

 \begin{lemma} 
				\label{lemma61}

Consider the universal homotopy invariant, matrix stable and linear-split half-exact $kk$-theory $kk$ 
by Cuntz \cite{cuntz}
constructed for the category of 
locally convex algebras induced by submultiplicative seminorms 
\cite{cuntz}, locally convex algebras 
by Cuntz \cite{cuntzweyl}, 
algebras 
or rings  by 
Corti\~nas and Thom \cite{cortinasthom}, 
and very special $G$-equivariant algebras 
or rings by Ellis \cite{ellis} 
denoted $kk^G$, respectively, 
and their analogous universal homotopy invariant, matrix stable and  splitexact counterparts 
very special $GK^G$-theory $GK^G$ \cite{gk}. 

Then there exists a functor (here $f$ 
is any algebra homomorphisms) 
$$\Gamma: GK^G \rightarrow kk^G :\Gamma(f) = f .$$

\end{lemma}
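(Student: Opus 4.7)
The plan is to exploit the generators-and-relations presentation of $GK^G$-theory from Subsection \ref{subsec22}: producing $\Gamma$ reduces to specifying its value on the generating morphisms (a)--(c) and then verifying that the defining equivalence relations (m)--(r) hold in $kk^G$. First I would set $\Gamma$ to be the identity on objects and on every algebra homomorphism generator of type (a); this settles the ``identical on algebra homomorphisms'' clause tautologically and makes the formal relations (m), (n), (o) automatic, while the homotopy relation (q) transfers directly from the homotopy invariance built into $kk^G$.

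For the inverse corner embedding generators $e^{-1}$ of (b), the very special matrix stability of $kk^G$, as formulated in \cite{ellis} (algebras/rings) or \cite{cuntz} (locally convex algebras), asserts precisely that every very special corner embedding $e$ admits an inverse in $kk^G$. Setting $\Gamma(e^{-1})$ equal to that inverse makes relation (p) hold by construction. The one subtle point to recheck here is that the class of corner embeddings declared invertible in very special $GK^G$ matches the class made invertible by the matrix stability axiom on the $kk^G$ side; this is straightforward in the very special setting since both axioms single out the same canonical maps $A\to M_n\otimes A$ (up to the usual stabilisation).

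The main content is the synthetic splits $\Delta_s$ of (c). Given a split exact sequence as in \re{eq8}, the algebra homomorphism split $s$ is in particular a (continuous) linear split, so linear-split half-exactness of $kk^G$ applies. From the resulting Puppe-type exact sequence one extracts that $(\iota,s):J\oplus A\to X$ becomes invertible in $kk^G$; let $p:X\to J$ denote the projection onto the first factor via this isomorphism, and define $\Gamma(\Delta_s):=p$. By construction $\iota\cdot p=\id_J$ and $p\cdot\iota+f\cdot s=\id_X$ in $kk^G$, which is exactly relation (r).

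The main obstacle is not computational but definitional alignment: one must verify that linear-split half-exactness in the chosen version of $kk^G$ genuinely produces the two identities above (rather than merely a long exact sequence of abelian groups), and that the stabilisation and homotopy conventions coincide with those of $GK^G$. This is the standard argument that a linear-split half-exact functor on an additive category is automatically split-exact in the categorical sense, but it must be checked in each of the settings of \cite{cuntz}, \cite{cortinasthom}, \cite{ellis} separately. Once this is secured, $\Gamma$ is well-defined on strings in the alphabet $\Theta\cup\{+,-,\cdot,(,)\}$ and descends to the quotient by the generated equivalence relation, yielding the desired functor.
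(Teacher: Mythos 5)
Your proposal is correct and follows essentially the same route as the paper: both arguments invoke the generators-and-relations (universal) presentation of $GK^G$, send algebra homomorphisms to themselves, invert corner embeddings via matrix stability, and obtain the synthetic splits $\Delta_s$ by observing that linear-split half-exactness plus the algebra-homomorphism split forces the connecting maps in the Puppe/six-term sequence to vanish, so that the sequence becomes split exact in $kk^G$. The only difference worth noting is that the paper explicitly flags the $G$-equivariant (Ellis) case as resting on a conjectural transfer of \cite[Corollary 6.4]{cortinasthom}, a caveat your closing paragraph gestures at but does not single out.
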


\begin{proof}

We follow the proof of Higson \cite[Theorem 3.5]{higson2} (the $KK \rightarrow E$ 
 case in $C^*$-theory). 

{\bf (a)} 
In \cite[p. 269]{higson} Higson remarks, that for a functor 
$F:R \rightarrow \bfa$ from a class of algebras $R$ 
into an 
additive category $\bfa$ 
the functor $F(A,-)$ is stable, homotopy invariant and 
splitexact for each object $A\in R$ if and only if $F$ itself has these properties, 
which means $\bfa$ is provided with morphisms $F(e)^{-1}s$ 
(inverses of  {\em matrix} corner embeddings $e$) and $\Delta_\cals$s (synthetic  
splits for short splitexact sequences $\cals$, see \re{eq8}, 
in the image of $F$) and $F(f)$s (where $f$ are algebra homomorphisms)  
as in $GK^G$-theory 
defined in \cite[Definition 3.5]{gk}
such that the 
defining axioms 
of $GK^G$-theory hold in $\bfa$ for the morphisms
$\{ F(f), F(e)^{-1} , \Delta_\cals|\, f, e ,\cals\}$. 
Hence, by the universal construction of $GK^G$-theory 
by generators and relations \cite[Definition 3.5]{gk},
we are provided with a canonical functor $GK^G \rightarrow \bfa$ 
which sends the generators of $GK^G$ to 
the 
aforementioned morphisms of $\bfa$
and the product and addition of $GK^G$ to those of $\bfa$.

{\bf (b)}
In both Cuntz' $kk$-theories 
for locally convex algebras \cite{cuntz} and \cite{cuntzweyl}, 
respectively,   
a given short splitexact sequence \re{eq8} 
\if 0
\begin{equation}   \label{eq88}
\xymatrix{ 
\cals: 
0   \ar[r] 
 &J     
 	\ar[rr]_{\iota}  &  & X       \ar@<.5ex>[rr]^f    
& &   A \ar[ll]^{s}   \ar[r]   
& 0        
}
\end{equation} 
\fi 
becomes a 
short splitexact 
sequence in the image of the functor $F(-):=kk(A,-)$,   because 
by the cyclic six-term exact sequences of 
\cite[Theorem 5.5]{cuntz} 
and \cite[Theorem 8.5]{cuntzweyl}, respectively, the connecting maps 
$F(\partial)$ must be null since $F(f)$ is surjective by the split
$F(s)$, and thus $F(\iota)$ injective. 
Hence 
last item (a) applied to the functor $kk:R \rightarrow \bfa =: kk$ 
(the category $kk$) yields the desired functor 
$\Gamma: GK 
\rightarrow kk$. 

{\bf (c)} 
 The functors $kk(A,-)$ are also splitexact in the case of 
 category of algebras or rings by 
 Corti\~nas and Thom \cite{cortinasthom} 
 by \cite[Corollary 6.4]{cortinasthom}, 
 yielding the claim also in that case by (a). 

\if 0
{\bf (d)} 
Ellis defines $G$-equivaraint $kk$-theory for algebras 
as  
$$kk^G \big ((A,\alpha), (B,\beta) \big):= kk_{G-{\rm Alg}} \big (
(M_{|G|} \otimes A, \lambda \otimes \alpha), 
(M_{|G|} \otimes B, \lambda \otimes \beta) \big ),$$
where $\lambda_x(e_{g,h}) = e_{xg,xh}$ 
for all $x,g,h \in G$ and $kk_{G-{\rm Alg}}$ is Cortinas and Thom's 
{non-equivarant} $kk$-theory formally restricted to the $G$-algebras. 
 By  
 definition of $kk^G$, 
 equivariant algebra homomorphisms $u:A \rightarrow B$ go to algebra homomorphisms $\id_{M_{|G|}} \otimes u$, whence 
 splitexactness of 
 $$F(-):=kk^G(A,-)= kk_{G-{\rm Alg}}( 
 M_{|G|} \otimes A, 
 {M_{|G|}} \otimes -)$$ 
 follows 
 as soon as $F(\iota)$ is 
 injective for all given short splitexact sequences \re{eq8}.  
 But by \cite[Corollary 6.4]{cortinasthom} again
 the 
 map $F(\iota)$ without the ``${G-{\rm Alg}}$'' restriction 
 in $kk$ 
 is 
 injective, so it must be also injective when restricted 
 to  ``${G-{\rm Alg}}$''. 
 
 by equipping the auxiliary spaces 
 appaering there with the trivial $G$-action   - 
 this is {\em conjectural}, but 
 used excision in that proof is verified by 
 Ellis in $kk^G$ \cite{ellis} -, and thus 
 we have a functor $\Gamma: GK^G \rightarrow kk^G $ 
 by (a) again. 
\fi 

{\bf (d)} 
Ellis \cite{ellis} defines $G$-equivariant $kk$-theory for algebras 
as  
$$kk^G \big ((A,\alpha), (B,\beta) \big):= kk_{G-{\rm Alg}} \big (
(M_{|G|} \otimes A, \lambda \otimes \alpha), 
(M_{|G|} \otimes B, \lambda \otimes \beta) \big ),$$
where $\lambda_x(e_{g,h}) = e_{xg,xh}$ 
for all $x,g,h \in G$ and $kk_{G-{\rm Alg}}$ is 
Corti\~nas and Thom's 
{non-equivariant} $kk$-theory formally restricted to the $G$-algebras. 
 By  
 definition of $kk^G$, 
 equivariant algebra homomorphisms $f:A \rightarrow B$ go to algebra homomorphisms $\id_{M_{|G|}} \otimes f$, whence 
 splitexactness of $kk^G(A,-)= kk_{G-{\rm Alg}}( 
 M_{|G|} \otimes A, \id_{M_{|G|}} \otimes -)$ 		
 follows from 
 an analogous proof of  
 \cite[Corollary 6.4]{cortinasthom} again 
 by equipping the auxiliary spaces 
 appearing there with the trivial $G$-action   - 
 this is {\em conjectural}, but 
 used excision in that proof is verified by 
 Ellis in $kk^G$ \cite{ellis} -, and thus 
 we have a functor $\Gamma: GK^G \rightarrow kk^G $ 
 by (a) again. 
  %
%
\if 0
and the same 
where splitexactness means that for the short splitexact 
sequence \re{eq8}, the map $F(j)    $
For instance, by construction, the canonical functgor $F: R \rightarrow GK^G$ 
has these properties. 
\fi
\end{proof}

Due to $\Gamma$, computations one achieves in $GK$-theory 
immediately 
descend to $kk$-theory, where one may exploit 
linear-split half-exactness, together with the fact that $\Gamma$ is 
faithful 
on $K$-theory. 
%
Indeed, we recall 
that besides the considered class of algebras, 
it turns out that the {\em blueprint} 
of $K$-theory in 
$kk$-theory by Cuntz \cite{cuntz} and  $GK$-theory \cite{gk} 
both coincide with the $K$-theory 
$K$ of Phillips \cite{phillips} for Fréchet algebras  
by \cite[Theorem 7.4]{cuntz} and \cite[Theorem 4.3]{gk2}:
$$kk(\C,A) \cong K(A) \approx GK(\C,A) \qquad
\mbox{(sloppy)}	. $$ 

\if 0 
Due to $\Gamma$, computations one achieves in $GK$-theory 
immediately 
descend to $kk$-theory, where one may exploit 
half-exactness, together with the fact that $\Gamma$ is is faithful 
on $K$-theory. 
%
Indeed, we recall 
that besides the considered class of algebras, 
it turns out that the {\em blueprint} of $K$-theory in very special $kk^G$-theory and very special $GK^G$-theory coincide with the very special $K$-theory $K^G$ of Phillips:
$$kk(\C,A) \cong K^G(A) \approx GK^G(\C,A) \qquad
\mbox{(sloppy)}$$ 
\fi

\begin{lemma}[$e^{-1}$ skips $\Delta_s$]   \label{lemma52} 

In very special $GK^G$-theory, 
let a splitexact sequence 
 and a canonical corner embedding $e_J$ 
as in the first line and the first column, respectively, of the 
diagram below 
be given ($e_M,e_A$ are also the canonical corner embeddings).    

Then $e_J^{-1} \cdot \Delta_{s} = \Delta_{s \otimes \id} \cdot e_M^{-1} $
as readily verified by this commuting diagram: 


$$\xymatrix{
 (J ,\beta)  \ar[d]^{e_J}   \ar[r]_i &  (M, \gamma)   
 \ar@<.5ex>[r]^f   
 \ar@<-.5ex>[l]_{\Delta_s}  
 \ar[d]^{e_M}   &  (A, \alpha)   \ar[l]^s  \ar[d]^{e_A}   \\ 
  (M_n \otimes J, \kappa \otimes \beta)     \ar[r]_{i \otimes \id}  &  
  (M_n \otimes M  , \kappa \otimes \gamma)   
  \ar@<-.5ex>[l]_{\Delta_{s \otimes \id}} 
  \ar@<.5ex>[r]^{f  \otimes \id}    &  (M_n \otimes A,  \kappa \otimes \alpha )    \ar[l]^{s  \otimes \id}    \\
}$$

 \end{lemma}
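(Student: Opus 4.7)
The plan is to recognise the displayed picture as a morphism of splitexact sequences in $GK^G$ and then apply the commuting-diagram principle of Subsection \ref{subsec24} to transport $\Delta_{s \otimes \id}$ to $\Delta_s$ via conjugation by the three corner embeddings. First I would record that the three horizontal squares in the diagram commute already at the level of algebra homomorphisms: $\iota \cdot e_M = e_J \cdot (\iota \otimes \id)$, $f \cdot e_A = e_M \cdot (f \otimes \id)$, and $s \cdot e_M = e_A \cdot (s \otimes \id)$. This is immediate because each $e_?$ is of the form $a \mapsto a \oplus 0 \oplus \cdots$, so both ways around the square place the incoming element into the $(1,1)$-corner and then apply the algebra homomorphism. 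Consequently the top and bottom rows, together with the three vertical corner embeddings, form a bona fide morphism of splitexact sequences in $GK^G$, and the vertical arrows are invertible there by axiom (p) of Subsection \ref{subsec22}.

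The second step is to invoke the naturality of $\Delta_s$ under such a morphism. This is exactly what the commuting-diagram principle of Subsection \ref{subsec24} (itself drawn from \cite[Lemma 4.3]{gk}) furnishes: whenever two $L_1$-morphisms are connected by equivariant algebra homomorphisms with compatible $M_2$-spaces, the corresponding synthetic splits are intertwined. One applies it here by packaging $\Delta_s$ and $\Delta_{s \otimes \id}$ as the splits of trivially padded extended double splitexact sequences in the format \re{eq29}, with the $M_2$-space of the bottom row obtained from that of the top row by tensoring with $M_n$ carrying the canonical very special action, so that the hypothesis on $M_2$-spaces holds automatically. The principle then yields $\Delta_s \cdot e_J = e_M \cdot \Delta_{s \otimes \id}$ in $GK^G$, and the stated identity follows after multiplication by the inverses of the appropriate corner embeddings.

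As a sanity check, I would also verify directly that $\phi := e_M \cdot \Delta_{s \otimes \id} \cdot e_J^{-1}$ satisfies the splitexactness relations of axiom (r) for the top row: using $(\iota \otimes \id) \cdot \Delta_{s \otimes \id} = \id_{M_n \otimes J}$ together with $\iota \cdot e_M = e_J \cdot (\iota \otimes \id)$ one obtains $\iota \cdot \phi = \id_J$, and using $\Delta_{s \otimes \id} \cdot (\iota \otimes \id) = \id - (f \otimes \id)(s \otimes \id)$ together with the remaining two squares one obtains $\phi \cdot \iota = \id_M - f \cdot s$. These are exactly the relations defining $\Delta_s$.

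The main obstacle is that in the freely generated category $GK^G$ the relations (r) do not on their own pin down $\Delta_s$, so the above verification alone does not deliver $\phi = \Delta_s$; one genuinely needs the naturality statement imported from Subsection \ref{subsec24}. This is precisely the content of the authors' phrase ``readily verified by this commuting diagram'': once the three squares are in place, the $L_1$-commuting-diagram machinery closes the argument without further work.
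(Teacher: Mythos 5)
The paper offers no proof of this lemma beyond the phrase ``readily verified by this commuting diagram,'' so the only question is whether your argument is sound. Its computational core is, but you have inverted which of your two steps is load-bearing. Your ``sanity check'' --- showing that $\phi := e_M \cdot \Delta_{s\otimes\id}\cdot e_J^{-1}$ satisfies $\iota\cdot\phi=\id_J$ and $\phi\cdot\iota=\id_M-f\cdot s$ by pushing everything through the three commuting squares --- is in fact a complete proof, because, contrary to your closing claim, the relations (r) \emph{do} pin down $\Delta_s$ uniquely in any additive category. Indeed, if $\phi: M\to J$ satisfies both relations, then $\phi=(\Delta_s\cdot\iota+f\cdot s)\cdot\phi=\Delta_s+f\cdot(s\cdot\phi)$, and $s\cdot\phi=s\cdot\phi\cdot(\iota\cdot\Delta_s)=\bigl(s\cdot(\id_M-f\cdot s)\bigr)\cdot\Delta_s=(s-s)\cdot\Delta_s=0$ using $s\cdot f=\id_A$; hence $\phi=\Delta_s$, and the asserted identity follows by multiplying by $e_M^{-1}$. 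So no external naturality statement is needed.

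By contrast, the step you present as essential --- importing naturality from Subsection \ref{subsec24} --- is the shaky one: that principle is formulated for full $L_1$-elements $s_+\na_{s_-}$ with their $M_2$-space data, and its conclusion $s_+\na_{s_-}\cdot k=e\cdot u_+\na_{u_-}$ relates entire products; extracting the bare identity $\Delta_s\cdot e_J=e_M\cdot\Delta_{s\otimes\id}$ from it would require cancelling the factor $s_+$ on the left, which is not available. Your ``trivial padding'' of $\Delta_s$ into the format \re{eq29} is therefore not a legitimate application of that subsection as stated in this paper. Drop that step, keep the direct verification, and add the two-line uniqueness argument above. One further small point: under the paper's left-to-right composition convention the displayed identity $e_J^{-1}\cdot\Delta_s=\Delta_{s\otimes\id}\cdot e_M^{-1}$ does not typecheck ($e_J^{-1}$ lands in $J$ while $\Delta_s$ starts at $M$); the composable form, and the one actually used in the proof of Lemma \ref{lemma62}, is $e_M^{-1}\cdot\Delta_s=\Delta_{s\otimes\id}\cdot e_J^{-1}$, which is what your computation in fact establishes.
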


 As announced, 
 the next lemma shows 
 some similarity of $GK^G$-theory to $kk^G$-theory 
 in structure, because morphisms in $GK^G$-theory are shown to be presented 
 by a sequence 
 of  
 {\em synthetic splits}  
 as opposed to {\em linear splits} in  \re{eq160} 
 in $kk$-theory.

 \if 0
 As announced, 
 the next lemma shows 
 some similarity of $GK^G$-theory to $kk^G$-theory 
 in structure, because morphisms in $GK^G$-theory are shown to be presented 
 by encoding a sequence 
 of splitexact sequences with 
 {\em algebra homomorphism} splits 
 as opposed to {\em linear splits} in  \re{eq160}.   
 \fi

 \if 0
 As announced, 
 the next lemma shows the similarity of $GK^G$-theory to $kk^G$-theory 
 in structure, because morphisms in $GK^G$-theory are shown to be presented 
 by encoding a sequence 
 \re{eq160} of 
 algebra extensions, 
 at first splitting up in a collection of well-known 
 short split exact sequences, 
 with {\em algebra homomorphism splits} $s_{n+1-i}$ 
  as opposed 
 to {\em linear splits} $t_i$ in $kk$-theory 
 \big (read \re{eq160} from right to left\big).
 \if 0 
  thereby interpreting at first an {\em algebra homomorphism split} $t_n:=s_+$
  and afterwardss synthetical splits $t_{n-1} := \Delta_{s_{1}}, \ldots,  
  t_1 := \Delta_{s_{n-1}}$\big ).  
  \fi 
  \fi

 
\begin{lemma}		\label{lemma62}

In very special $GK^G$-theory, 
every $z \in GK^G(A,B)$ can be written in the form
$$ z =  s_+  \Delta_{s_1} \Delta_{s_{2}}  \ldots 
\Delta_{s_n}  e^{-1} . $$ 

\end{lemma}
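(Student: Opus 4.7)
By the definition of $GK^G$-theory (Subsection \ref{subsec22}), any morphism $z \in GK^G(A,B)$ is represented by a finite string in the alphabet consisting of algebra homomorphisms, synthetic splits $\Delta_s$, and inverse very special corner embeddings $e^{-1}$, combined by $+,-,\cdot$ and modulo the relations (m)--(r). The strategy is a two-stage normalisation: first reduce every monomial product of generators to the claimed form, and then absorb sums of such normal forms into a single expression of the same shape.

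For the monomial stage, I would use three local moves. First, consecutive algebra homomorphisms compose via axiom (o), so they coalesce into blocks. Second, Lemma \ref{lemma52} rewrites $e^{-1} \cdot \Delta_s = \Delta_{s \otimes \id} \cdot (e')^{-1}$, which pushes any inverse corner embedding rightward past any $\Delta_s$. Third, naturality of the corner embedding gives $e_A^{-1} \cdot f = (\id_{M_n} \otimes f) \cdot e_B^{-1}$ for any algebra homomorphism $f\colon A \to B$, pushing $e^{-1}$ rightward past $f$ at the cost of replacing $f$ by $\id_{M_n} \otimes f$. Iterating these two ``$e^{-1}$ moves right'' operations brings the monomial into the shape (algebra homomorphisms and $\Delta_s$'s in some order)$\,\cdot\, e_1^{-1} \cdots e_m^{-1}$. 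The chain of adjacent inverse very special corner embeddings then collapses to a single $e^{-1}$, since $M_p \otimes (M_q \otimes A) \cong M_{pq} \otimes A$ identifies the composite corner embedding of $A$ as very special again. What remains on the left is a string of algebra homomorphisms and $\Delta_s$'s; using naturality of $\Delta_s$ under morphisms of splitexact sequences (Subsection \ref{subsec24}), every algebra homomorphism in the interior can be pulled past a $\Delta$ on its right, at the cost of replacing $\Delta_s$ by the $\Delta$ of the pulled-back splitexact sequence. This leaves all algebra homomorphisms at the far left (coalesced into a single ``$s_+$'') and all $\Delta$'s in the middle, as required.

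For an arbitrary sum $z = \sum_i z_i$ with each $z_i = s_+^{(i)} \cdot \Delta_{s_1^{(i)}} \cdots \Delta_{s_{n_i}^{(i)}} \cdot (e^{(i)})^{-1}$ already in canonical form, I would realise $z$ as a single expression of the same form by a block-diagonal construction. Taking the levelwise direct sum of the splitexact sequences underlying the $z_i$'s produces a splitexact sequence whose synthetic split equals the direct sum of the individual $\Delta_{s_j^{(i)}}$'s; the direct sum of the underlying very special corner embeddings, after the identification $\bigoplus_i M_{n_i} \hookrightarrow M_N$, is again a single very special corner embedding. Combining these with the diagonal algebra homomorphism $A \to \bigoplus_i A$ on the left (absorbed into the final ``$s_+$'' slot) yields the claim, where chains $\Delta_{s_1^{(i)}} \cdots \Delta_{s_{n_i}^{(i)}}$ of differing lengths are padded by copies of $\Delta_{\id}$, which equal identities by the splitexactness axiom applied to a trivially split sequence (cf.\ \cite[Remark 3.6]{gk}).

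The main obstacle is the naturality step for $\Delta_s$ under algebra homomorphisms: pulling back a splitexact sequence along $f$ does yield a new splitexact sequence, but checking that the resulting $\Delta_{s'}$ composes with $f$ exactly as required needs careful use of the $\Delta$-axioms (r) of Subsection \ref{subsec22} together with meticulous composability bookkeeping. Collapsing chains of inverse very special corner embeddings, in turn, relies on the fact that such embeddings form a cofinal directed system under iterated matrix stabilisation --- precisely the feature of \emph{very special} $GK^G$-theory that underlies the statement and that would fail in a general setting.
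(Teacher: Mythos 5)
Your route is genuinely different from the paper's, and it contains two gaps that I do not see how to close with the tools available here. The paper does not normalise raw generator strings at all: it invokes \cite[Lemma 10.2]{gk}, which says every $z\in GK^G(A,B)$ is already a finite \emph{product} of $L_1$-elements (this single citation disposes of sums, differences and arbitrary orderings of generators in one stroke), writes each $L_1$-factor in very special $GK^G$-theory as $t_+\Delta_{t_-}f^{-1}$ (the $f_2\cdot f_1^{-1}$ part of \re{eq29} dying by a rotation homotopy), and then inducts on the number of factors: the product $(t_+\Delta_{t_-}f^{-1})\cdot s_+\Delta_{s_1}\cdots\Delta_{s_n}e^{-1}$ is brought back to normal form by first fusing the $L_1$-element $t_+\na_{t_-}$ with the homomorphism $s_+$ on its right via \cite[Lemma 9.5]{gk}, and then sliding the resulting inverse corner embedding past the $\Delta$'s with Lemma \ref{lemma52}. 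The only commutation past a synthetic split that is ever needed is for an inverse \emph{corner embedding}, which is exactly what Lemma \ref{lemma52} supplies.

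Your normalisation, by contrast, needs to pull an arbitrary algebra homomorphism past an arbitrary $\Delta_s$, ``replacing $\Delta_s$ by the $\Delta$ of the pulled-back splitexact sequence''. The naturality actually available (Subsection \ref{subsec24}, i.e.\ \cite[Lemma 4.3]{gk}) only intertwines the $\Delta$'s of two splitexact sequences that are \emph{already given} together with a commuting ladder between them; to move a homomorphism $f$ past $\Delta_s$ you would first have to construct the pushed-forward or pulled-back splitexact sequence along $f$, and a split exact sequence does not pull back (or push out) to a split exact sequence along a general homomorphism. This is precisely the obstruction recorded in the table of Section \ref{sec7}, where the fusion of a general $\Delta_s$ is marked as unavailable even in very special $GK^G$-theory; your argument would, if it worked, contradict that entry. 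A second gap is the treatment of sums: realising $\sum_i z_i$ by a block-diagonal construction requires the codiagonal $\bigoplus_i B\to B$, which is not an algebra homomorphism, so the direct-sum expression does not represent the sum inside the generator calculus. The additive structure has to be absorbed by the $L_1$-machinery (\cite[Lemma 9.9, Corollary 9.10, Lemma 10.2]{gk}), which is why the paper's proof starts from the finite-product decomposition rather than from raw strings. Your observation that chains of very special inverse corner embeddings collapse via $M_p\otimes M_q\cong M_{pq}$ is correct and is implicitly used in the paper's last step $F_n^{-1}e^{-1}$, but it does not rescue the two steps above.
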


\begin{proof}

Since every $z$ can be written as a finite product of $L_1$-elements 
by \cite[Lemma 10.2]{gk},  
it is sufficient by induction on $n$, that the product of the  
above $z$ with an $L_1$-element $x := t_+ \na_{t_-}= t_+ \Delta_{t_-} f^{-1}$
(the last identity because in very special $GK^G$-theory 
the product $f_2 \cdot f_1^{-1}$ in \re{eq29} vanishes by an ordinary 
rotation homotopy), that is $x \cdot z$, has the 
above form again:
\if 0
 $z \cdot w = z'$
Now $z \cdot  t_+ \Delta_{t_-} f^{-1} =  
s_+  \Delta_{s_1} \Delta_{s_{2}}  \ldots 
\Delta_{s_n}  e^{-1} $   
\fi 
$$ x \cdot z = t_+ \Delta_{t_-} f^{-1}  \cdot s_+  \Delta_{s_1} \Delta_{s_{2}}  \ldots 
\Delta_{s_n}  e^{-1}  $$
$$= u_+ \Delta_{u_-} F^{-1}  \cdot   \Delta_{s_1} \Delta_{s_{2}}  \ldots 
\Delta_{s_n}  e^{-1}  
= u_+ \Delta_{u_-}   \cdot   \Delta_{s_1'} \Delta_{s_{2}'}  \ldots 
\Delta_{s_n'}  F_n^{-1} e^{-1}  $$
where in the second line we used \cite[Lemma 9.5]{gk}
(fusion of an $L_1$-element with a homomorphism), and then
$n$-fold lemma  \ref{lemma52}. 
	%
\if 0
$$ s_+  \Delta_{s_1} \Delta_{s_{2}}  \ldots 
\Delta_{s_n}  e^{-1} \cdot t_+ \Delta_{t_-} f^{-1} $$
$$ s_+  \Delta_{s_1} \Delta_{s_{2}}  \ldots 
\Delta_{s_n}  E^{-1}  \Delta_{T_-} F^{-1} $$
$$ s_+  \Delta_{s_1} \Delta_{s_{2}}  \ldots 
\Delta_{s_n}   \Delta_{T_- \otimes \id }    F_2^{-1} f^{-1} $$
\fi 
\end{proof}


\if 0
\begin{corollary}

If one allwows only elementary splits $\Delta_A$, then 
every $GK^G = L_1 GK^G$ 

\end{corollary}
\fi

\section{Review of computable products}   
				\label{sec7}

We are going to summarize the computable products  in 
$GK^G$-theory - 
meaning that the product of two $L_1$-element is an $L_1$-element again -, 
which are then 
calculable in 
$KK^G$-theory by the same means too,  
and also indirectly in $kk^G$-theory 
via 
the functor $\Gamma : GK^G \rightarrow kk^G$ of Lemma \ref{lemma61}.

\subsection{Computable products}			\label{subsec7}
The next table gives an overview which 
generators $x \in \Theta$ can be multiplied with an 
$L_1$-element $\bfz \in L_1$ 
to an $L_1$-element, indicated with Yes and No in the table. Note that every element in $GK^G$
may be written as a finite product $x_1 \cdot \ldots \cdot x_n$ of $L_1$-elements by  \cite[Lemma 10.2]{gk},  
so that 
except the product 
$\bfz \cdot \kappa$ everything in the table below extends to all 
$\bfz \in GK^G$. 

Here $\bfz \in L_1 GK^G(A,B)$, 
${\kappa} \in L_1 GK^G(\bfc, A)$, 
$\phi$ is an algebra  homomorphism, 
$e^{-1}$ the inverse of a corner embedding, 
and $\Delta_s$ a synthetic    
split.  
Moreover, v.s.  $GK^G$, s. $GK^G$ and  $GK^G$,
respectively,  means all 
involved corner embeddings which manifest inverted 
in the stated product are very special, special and
generalized, respectively.   
\if 0
Moreover, $Y$ and $N$ mean yes, it is possible to form 
the product and land in $L_1$, and no, no formula is knwon, respectively. 
\fi
  The stated products are 
  assumed to be composable: 
  

\begin{longtable}[ht]{lllll}      

	Product  &  v.s.  $GK^G$  &  s.  $GK^G$  &  $GK^G$  
 & Reference 
\\
	$\phi \cdot \bfz $    & Y  & Y & Y     & 
		\cite[Lemma 7.2]{gk} \\  
		$ \bfz  \cdot \phi   $    & Y  & Y & Y     &  \cite[Lemma 9.5]{gk} \\ 
				$ e^{-1}  \cdot  \bfz    $    & Y  & Y  &  
				$\Leftrightarrow \exists u \in L_1: \; e \cdot u = \bfz$ 
								&    Lemma \ref{lemma31}   \\    
				$   \bfz \cdot e^{-1}    $    & Y  & Y  & Y    
				& \cite[Corollary 8.2]{gk}   \\ 

	\if 0
  $\phi \cdot s_+ \na_{s_-0} =  t_+ \na_{t_-} $       \\ 
  $e^{-1}  \cdot s_+ \na_{s_-0}  = t_+ \na_{t_-}  $   \\
  $ s_+ \na_{s_-0} \cdot \phi  = t_+ \na_{t_-}  $   \\
  $ s_+ \na_{s_-0} \cdot \phi  = t_+ \na_{t_-}  $  \\  
  \fi 
	$\Delta_A \cdot \bfz $  
	& Y  &   Y  &  N 
	&  Corollary \ref{cor5}   \\  
	$\kappa \cdot \bfz  $  & Y  & NY & N    &  
				\cite[Proposition 3.4]{gk2}   \\   
 	$\bfz \cdot \kappa  $ & Y  & NY   & N     
 	&  Theorem  \ref{thm1}    \\  
 	$\Delta_s \cdot \bfz  $ & N & N &  $\Leftrightarrow \exists u 
 	\in L_1: \; i \cdot u = \bfz$ 
 	&  
 	Lemma  \ref{lemma51}   \\    
 	
\end{longtable}

In the last line we see, 
for 
the fusion with a general split $\Delta_s$ 
no formula is available so far, 
but the last two lines before the last line 
cover the important special cases $\kappa \cdot \bfz$ and 
$\bfz \cdot \kappa$, 
where the algebra $\bfc$ is involved in the product, 
notably used in the Dirac dual-Dirac method \cite{kasparov1988}.
Beside these synthetic 
splits $\Delta_s$ all other problems 
in the table  from left to right are 
caused 
by the 
`inability' so far of forming the product $e^{-1} \cdot \bfz$  
of  a {\em generalized} inverse corner embedding 
$e^{-1}: \calk_A(\cale \oplus A) \rightarrow A$ with a level-one element  $\bfz$, and the need of unitization in computing $\kappa \cdot \bfz$. 
The NY answer in the table means it depends on further assumptions, 
sometimes requiring the corner embedding $e$ to be into a finitely 
sized matrix (then it works always). 

\if 0
We see that in very special $GK$-theory we can fuse all 
generators excepting the general splits 
$\Delta_s$, and all $L_1$-elements starting at $\C$, with 
any $L_1$-elements to an $L_1$-element. 
Contrary, in general $GK^G$-theory, only a few fusions are possible,
but partially more is possible if one has special corner embeddings. 
\fi

\section{Implications for $KK$-theory}		\label{sec8}

Recall from Subsection \ref{subseckk} that $KK^G$-theory 
can be equivalently described as $GK^G$-theory \cite{aspects}. 
 
\if 0
If we choose in defintion fro $R$ a set of $C^*$-algebras,
and axiomatically allow {\em corner embeddings} to be the algebra
homomorphisms 
$e: (A,\alpha) \rightarrow (\calk \otimes A, \gamma)$ 
defined by $e(a) = e_{11} \otimes a$ 
 in axioms Subsection \ref{subsec22}, (b) and (p), and $G$ to be  locally 
compact group or Hausdorff groupoid with compact base space $G^{(0)}$,  
or an unital semigroup $G$, then 
$KK^G$ is exactly the known $KK^G$-theory, see   
\cite{gk1} or \cite{aspects} for more on this. 
     \fi
     
	{\bf (a)}  
     That is why all computations indicated 
     in the above table can be verbatim done in
     $KK^G$-theory, if analogously restricted to 
     very special and special $G$-actions if indicated. 
      Actually, every Kasparov cycle in $KK^G$-theory is equivalent 
      to a cycle with special 
      $M_2$-action, 
      by Lemma \ref{lemma32} and  
      \cite[Remarks.(2) on page 156]{kasparov1988} 
      in combination with the 
      translation \cite[Lemma 7.6.(iv)]{aspects}, but 
      the latter transformation employs 
      Kasparov's technical 
      theorem involving the axiom of choice and thus destroys the computability 
      via explicit formulas in our approach.  
       
       {\bf (b)} 
      However, notice that for {\em non-equivariant} $KK$-theory all $G$-actions are trivially very special and thus all 
      formulas for the $KK$-theory products of the first column
      of the above  table work, involving typically 
      computations in the Dirac dual-Dirac method. 
      One at first stabilizes the Kasparov cycles by adding on the infinite sum Hilbert modules $A^\infty$ to them in order to get 
      only  ordinary inverse corner embeddings  $e^{-1} : \calk  
      \otimes A  
      \rightarrow A$ within them. 
      The computability by explicit formulas 
      then remains.  
      (For instance, $\phi$ in \re{eq55} below 
      may be chosen to be an injective algebra homomorphism 
      induced by the Kasparov stabilization theorem.)  
   
        {\bf Conjecture.}
        {\bf (a)}  
        Similarly, it is believable that Lemma \ref{lemma21}  
        might be 
        provable for compact groups $G$ and $KK^G$ by an 
        analogous proof, which would yield computability of those Kasparov products  
       positively 
       affirmed in the very special $GK^G$ column 
        of the above table  
        of Section \ref{sec7}   
        within $KK^G$ 
        for {\em compact} groups $G$, 
        because $KK^G$ 
        would then automatically be {\em very special} 
        $KK^G$-theory.  
        
        \if 0
        **** jedoch aufpassen wegen special $G$-action adjunktion einheit 
        \fi 
        
        {\bf (b)} 
      Even more so, it appears that in many $KK$-theory element 
       constructions used by 
      Kasparov \cite{kasparov1988, zbMATH06685586} and others,   
      the underlying Hilbert modules are constructed as shown in the table 
      of Subsection \ref{subsec211}, and this offers a way to lift the computability 
      of these examples to {\em $G$-equivariant} $KK$-theory 
      once results from 
      the note \cite{corner} are extended to Hilbert $C^*$-modules - as we would expect 
      to be possible -, because inverse corner embeddings 
      would then be describable 
      as $\phi \cdot f^{-1}$   
       for an algebra homomorphism $\phi$ and a {\em very special} 
 corner embedding $f$ 
 as  mentioned in Subsection \ref{subsec211}.  

Indeed, in that case 
a Kasparov cycle $z \in KK^G(A,B)$ involving only Hilbert modules constructed out of the elementary 
constructions shown in the table of Subsection \ref{subsec211} 
could be presented as 
a level-one element 
$t_+ f_2 f_1^{-1} \Delta_{t_- \oplus \id_A} \phi f^{-1} \in GK^G(A,B)$  
 in $GK^G$-theory 
  as visualized in this diagram:
\be{eq55}
\xymatrix{ 
B  \ar[r]^-f      &  \calk_B( \oplus_\N B) &  \calk_B(  \calf  \oplus B) 
\ar[l]_-\phi  
 	\ar[r]_-{\iota}   &  \call_B \big ( 
 	\calf \oplus B \big ) \square_{t_- \oplus \id_A}  A       \ar@<.5ex>[r] 
 	\ar@<-.5ex>[l]_-{ \Delta_{t_- \oplus \id_A} }   
&      A \ar[l]^-{t_\pm \oplus \id_A}    
}  
\en 

(For $\phi$ the identity this 
had already  been described in 
\cite[Definition 9.3]{aspects}.) 
 
 \if 0
Replacing here 
$f^{-1} \circ \phi$ 
by the 
corner emebdding 
$e: B \rightarrow \calk_B(\calf \oplus B)$, this 
had already  been described in 
\cite[Definition 9.3]{aspects},  
but the point is if we had have the technique of \cite{corner} 
for $C^*$-already already  
available, we would write $e^{-1} = f^{-1} \circ \phi$ with very special corner emebdding $f$ as described 
in Subsection \ref{subsec211}, and 
\fi 
Because $f$ is very special, 
this would make some products 
involving factors of 
such cycles computable,  if calculable 
according to the table 
of Section \ref{sec7}
meeting extra criteria like (very) speciality of $M_2$-spaces 
and/or finiteness of the matrix size of involved corner embeddings.
\if 0 
, 
provided at least 
that one of the two involved $M_2$-spaces of the factors is very special
(or the corner embedding's  
matrix standing in the middle of the product is finite). 
\fi

{\bf Outlook.} 
The reverse implication of 
the last 
considerations, technically already workable for algebras, is that example computations from $KK^G$-theory 
like those showing up in the Dirac 
dual-Dirac method  \cite{kasparov1988}
might be potentially transferred - at least as far as 
calculating the Kasparov 
product at the first place is concerned - to other categories of algebras than $C^*$-algebras using $GK^G$-theory. 
%
Related seems Cuntz' paper \cite{zbMATH05130839} about 
index theory 
on 
differentiable manifolds 
employing $kk$-theory 
for locally convex algebras, 
and {Corti{\~n}as and Tartaglia \cite{zbMATH06823303}.

 \if 0
The reverse implication of 
the last 
considerations is that Kasparov products of example computations from $KK^G$-theory 
like in the Dirac 
dual-Dirac mehtod 
might be potentially 
computable in other algebras than $C^*$-algebras using $GK^G$-theory.

The point is that one would not stabilize this cycle, nor 
work with the corner embedding $f: B \rightarrow \calk_B(\calf)$, 
but uses that above format where $e$ is e very special corner emebdding

	Indeed, after transformation of such a Kasparov cycle 
	to $GK^G$-theory by explcit simple formulas, 
	it could meet the format

	\if 0
As seen from the above  table , the best case is 
when one states in very special $KK^G$-theory 
- for instance in non-equivariant $KK$-theory - 
This seems to be possible in many cases , because 
if the modules $\cale$ are contructed   like in the tabke of ..., 
and one has an cycle $(\cale ,T)$, then one forms the cycle 
$(\cale \oplus \cale \oplus A ,U(T))$ by construction 
for a certain unitary operator (formula by Connes) 

Even if we do not 
claim at all Kasparov's computations for $C^*$-algebras could be similarly carried over
to the setting of another class of algebras 

In \cite{} we have proven that for finite groups, any 
corner emebdding $e:(A,\alpha) \rightarrow (M_\infty(A) ,
\ad(\alpha \oplus \gamma))$ its inverse is presentable 
as  $e^{-1} = f^{-1} \circ \phi$ in  very special $GK^G$-theory,
with $f$ a very special corner embedding. 
  \fi 
  
It is likely that erything transfers to $C^*$-algebras 
and compact groups, so that we conjecture:

\begin{conjecture}

If $G$ is a compact group, then $KK^G$-theory is very special.  
\end{conjecture}

To explain this, let us 
review the following lemma: 

\begin{conjecture}[Translating $KK^G$ to $GK^G$]

If $z=[s_- \oplus s_+, \cale_- \oplus \cale_+,T]$ is a Kasparov cycle,  then 
it
can be presented as 
$t_+ f_2 f_1^{-1} \Delta_{t_-} \varphi e^{-1}$ in $GK^G$-theory for $C^*$-algebras as visualized in this diagram:
\if 0
If $z=[s_- \oplus s_+, \cale_- \oplus \cale_+,T]$ is a Kasparov cycle where $\cale_\pm$ 
are simple constructions as in table 
then it can be presented as 
$t_+ f_2 f_1^{-1} \Delta_{t_-} \varphi e^{-1}$ in $GK^G$-theory for $C^*$-algebras 
by a short simple explicit formulae from the given given indicated data of $z$
\fi
$$\xymatrix{ 
B  \ar[r]^-e      &  \calk_B( \oplus_\N B) &  \calk_B(  \calf ) 
\ar[l]_-\phi  
 	\ar[r]_-{\iota}   &  \call_B \big ( 
 	(\cale_- \oplus \cale_-)^2 \oplus B \big ) \square_{t_-}  A       \ar@<.5ex>[r]^-f    
 	\ar@<-.5ex>[l]_-{ \Delta_{t_-}}   
&      A \ar[l]^-{t_\pm \oplus \id_A}    
}$$
If $\cale_\pm$ 
are simple constructions as in table, then $e$ is a very special 
corner emebdding. 
\end{conjecture}

\begin{proof}
Here $\calf := (\cale_- \oplus \cale_+)^2 \oplus B$ (fives summands), $U \in \call_B(\calf)$ a unitary leaving the summand $B$ unchanged,  
$t_\pm \in \call_B(\calf)$ with  
$$t_- := s_- \oplus 0^5, \;  
t_+ := U \circ (0 \oplus s_+ \oplus 0^3) \circ U^*$$ 

\end{proof}

Since the $L_1$-elements of the last corollary 
are zuführbar to the computations mentioned in the above 
table, the reverse implication of 
last 
corollary is that known example computations from $KK^G$-theory 
like in the Dirac 
dual-Dirac mehtod 
might be potentially transferred to other algebras than $C^*$-algebras using $GK^G$-theory.

The reverse implication of 
last 
corollary is that example computations from $KK^G$-theory 
like in the Dirac 
dual-Dirac mehtod 
might be potentially transferred to other algebras than $C^*$-algebras using $GK^G$-theory. 
Of course, mch other aspects like Rellich lemma and large
homotopy computations are used, and there is no reason
to assume that could be somehow carried over.

Now note that the above conjectures are true 
in $GK^G$-theory for algebras, 
\fi

\if 0
\begin{corollary}[Computing Products  involving $\bfc$]  

If $w \in KK^G(A,B)$ is any element with special $M_2$-space, 
and $z \in KK^G(B , C)$ is any element fullfilling the above requiremnts (simpler invloved Hilbet modules), 
then the conditions of Theorem and of \cite[]{k}, respecvietly, are met, and thus the product 
$w \cdot z \in KK^G(A,C)$ can be computed by the simple expcite 
formulas indicated in that theorem and propsoitins, respectively, in $KK^G$-theory, {if} $B = \bfc$ and $C = \bfc$, respectively
(i.e. either one of tese two cases).  

\end{corollary} 
\fi

\if 0 

\section{} 

ENTWEDER MAN MAN macht unten bei $\square $ 
die aktion, oder oben bei plus, jedoch dass man 
die module aktion en bestimmt

\section{}

also $e$ muss very special, dann
$M_2$ aktion auf $M_n(X)$ von der form
$\ad (S^n \oplus T^n)$
 
\section{}

man brucht noch dass $J$ invaraint unnter $S T^{-1}$,
etc 

dazu hinreichend, dass $J$ invaraint unter $S$ unt $T$

\section{}

sont kriterion $s_-(A) (ST ....) in J$ gleich,
denn $S s_-(a) T = s_-(b) + J$ zwingt dass $a=b$ mit
projection  $F$

\section{}

also es ist immer $\gamma \square \delta$ di aktion

jedoch wird dann überall immer $\delta= \alpha \otimes 1$
vorasugesetzt

\section{}

wenn unten nicht special, dann müsste auch
$T_-(b ) (S T^-1 - 1) \in \calk$ sein		

\section{}

man muss wohl special annehmen, denn 
$T_-$ ist aj die stitige fortsetzung, bzw geht sowieso in $\call$,
und nicht in die kompakten

--> zb $T_-(b)=1$

\section{}

$\delta$ im mer invaraint , da $X$ ideal 

\section{}

\begin{lemma}

\if 0
Let $X \subseteq Y$ be a non-equivaraint subalgebra 
and $Z:=(M_2(Y), S \oplus T)$ a $G$-algebra. 
 
 If 
 $X \subseteq (Y,S)$ is a $G$-invariant subalgebra  
 and $S_g T_g^{-1}- S_g S_g^{-1} = 1_{\call Y}$ for all $g \in G$,
 then $M_2(X)$ is a $G$-invariant subalgrba of $Z$. 
  \fi 
  
  Let $(X,S)$ be a $G$-algebra and $(X;T)$ be  a $G$-moudles
  over it. 
  $$S_g \circ T_g^{-1}- S_g \circ  S_g^{-1}  \in A , 
    \qquad 
   T_g  \circ S_g^{-1}-  S_g \circ  S_g^{-1}  \in A $$
   
   {(i)}
 If $A \subseteq \call_{(X,S)} ((X,S))$ is a $G$-invaraint 
 subalgebra, then
$$(M_2(A), \ad(S \oplus T)|_{M_2(A)}) \subseteq 
(\call_{(X,S)} ( X \oplus X), \ad(S \oplus T)) $$ 
is a $G$-invaraint subalgebra. 

(ii) 
If  $(Y, S|_Y) \subseteq (X,S)$ is a $G$-invariant subalgebra, then 
$ (Y, T|_Y)$ is a $G$-modules over it. 
So that
$$(M_2(\call_Y(Y), \ad(S |_Y \oplus T|_Y))  
$$

\end{lemma}

\begin{proof}
Since $S_g x  S_g^{-1} \in X$, 
$$S_g x T_g^{-1} = S_g x S_g^{-1} S_g T_g 
= x' S_g T_g^{-1} = x' (S_g T_g^{-1}	  -1  ) + x' S_g S_g^{-1}$$ 
$$T_g x T_g^{-1} = T_g S_g^{-1} S_g x S_g^{-1} S_g T_g^{-1} 
=    (T_g S_g^{-1} - S_g S_g^{-1}) x' S_g T_g^{-1}  
  + S_g S_g^{-1}) x' S_g T_g^{-1}
$$
$$
= y x' S_g T_g^{-1}  + + S_g x'' T_g^{-1} \in A$$

$$T_g x S_g^{-1} = T_g  S_g^{-1} S_g x S_g^{-1}  
= T_g  S_g^{-1}  x' $$ 
$$T_g x  = T_g  S_g^{-1} S_g x   
= x'  + S_g S_g^{-1} S_g x $$ 

\end{proof}

\begin{lemma}

The $M_2$-space $M_2()$ of the last line of the diagram od th theorem
is special if and only if $S_g T_g^{-1}$...

\end{lemma}

\begin{lemma}

Consider the last line of the diagram of theorem 
which is a non-equivaraint extended splitexact seqence, but 
where $x_\pm : B \rightarrow \call_\bfx \big ((\bfx, \Omega_\pm)
 \big )$ are any two ($-$ and $+$) equivraint algebra homomorpphims and not the entered 
 ones of the diagram, 
 and such that $x_- (b) - x_+(b) \in \bfj$ 
 for all $b \in (B,\beta)$. 
 Then this line is an exended splitexact sequence 
 with special $M_2$-space
 $$
 \Big ( M_2(\bfy), \ad(\mu_- \oplus \mu_+)   \Big ) :=  
 \Big ( M_2 \big ( \call_\bfx(\bfx) \square_{x_-} B \big )  ,  
   \ad \big ((\Omega_- 
   \oplus \beta )  |_\bfy  
   \oplus 
   (\Omega_+ 
   \oplus \beta )  |_\bfy  \big )   
	\Big ) 
	, 
	$$
	if and only if (both cases $-$ and $+$) 
	$$
\Omega_{\mp,g} \circ \Omega_{\pm,g}^{-1}   -  \Omega_{-,g} \circ \Omega_{-,g}^{-1}  
    \in \bfj		\qquad (\forall g \in G)
          $$

\end{lemma}

\begin{proof}

Because $\bfj$ is quadratik, $\Omega_{\pm}$ 
let $\bfj$ invaraint, namely 
$\Omega_{\pm,g}(j k) = \Omega_{\pm,g}(j ) \Omega_{-,g}(k) 
 \in \bfj$ fro all $j,k \in \bfj$. 
  Consequently, $\mu_\pm$ leave $Y$ invaraint since 


 $$(\Omega_{\pm,g} \oplus \alpha_g)(j + s_\pm(a) \oplus a) 
    = \Omega_{\pm, g}(j) + s_\pm(\alpha_g(a)) \oplus \alpha_g(a)) \in  
   \bfx$$

	$$
   \mu_- \circ \mu_+^{-1}   - \mu_- \circ \mu_-^{-1}  
   =  (\Omega_- \circ \Omega_+^{-1}   \oplus  \alpha -  \Omega_- \circ \Omega_+^{-1}  
   \oplus \alpha)|_\bfx  \; \in  \;   
     \iota_n(\bfj)  \oplus 0 
     $$ 

\end{proof}

\fi

\bibliographystyle{plain}
\bibliography{references}

\end{document}